\documentclass[11pt,twoside]{article}
\usepackage{amsfonts,epsfig,graphicx,subfigure}
\usepackage{amsmath,amssymb,amsthm}
\usepackage{fullpage}
\usepackage{epic,eepic}

\usepackage{epsf}
\usepackage{fancyhdr}
\usepackage{graphics}
\usepackage{amsfonts}
\usepackage{amsmath}
\usepackage{amssymb}
\usepackage{psfrag}
\usepackage{enumerate}

\RequirePackage[dvips]{hyperref}




\newcommand{\var}{\operatorname{var}}

\newcommand{\sign}{\operatorname{sign}}





\providecommand{\abs}[1]{|#1|}
\providecommand{\norm}[1]{ \| #1 \|}

\DeclareMathOperator{\diag}{diag}

\theoremstyle{plain}
\newtheorem{theorem}{Theorem}
\newtheorem{corollary}{Corollary}
\newtheorem{lemma}{Lemma}

\theoremstyle{definition}

\theoremstyle{remark}




\providecommand{\betamin}{B^*_{\operatorname{min}}}

\providecommand{\relaxn}{\lambda_n} 
\providecommand{\proj}{\Pi}

\providecommand{\mata}{X}

\providecommand{\matb}{\widebar{X}}

\providecommand{\noiseb}{\widebar{w}}



\providecommand{\duala}{\dwit{1}}
\providecommand{\dualb}{\dwit{2}}

\providecommand{\noisea}{w}
\providecommand{\overlap}{\alpha}

\providecommand{\define}{\mathrel{\mathop:}=}



\newcommand{\orpar}{\ensuremath{\theta_{1,\infty}}}
\newcommand{\numobs}{\ensuremath{n}}
\newcommand{\pdim}{\ensuremath{p}}
\newcommand{\spindex}{\ensuremath{s}}

\newcommand{\real}{\ensuremath{\mathbb{R}}}
\newcommand{\Sset}{\ensuremath{S}}

\newcommand{\orlas}{\ensuremath{\theta_{\operatorname{Las}}}}

\newcommand{\defn}{\ensuremath{: \, =}}

\newcommand{\SignSup}{\ensuremath{\mathbb{\Sset}_{\pm}}}

\newcommand{\SignPri}{\ensuremath{\mathbb{\Sset}_{\operatorname{pri}}}}
\newcommand{\SignDua}{\ensuremath{\mathbb{\Sset}_{\operatorname{dua}}}}

\newcommand{\regpar}{\ensuremath{\lambda_n}}
\newcommand{\estim}[1]{\ensuremath{\widehat{#1}}}

\newcommand{\mytermone}{T}

\newcommand{\adjusta}{\bvecstar{1}_\Both}
\newcommand{\adjustb}{\bvecstar{2}_\Both}

\newcommand{\are}{r}

\newlength{\widebarargwidth}
\newlength{\widebarargheight}
\newlength{\widebarargdepth}
\DeclareRobustCommand{\widebar}[1]{%
  \settowidth{\widebarargwidth}{\ensuremath{#1}}%
  \settoheight{\widebarargheight}{\ensuremath{#1}}%
  \settodepth{\widebarargdepth}{\ensuremath{#1}}%
  \addtolength{\widebarargwidth}{-0.3\widebarargheight}%
  \addtolength{\widebarargwidth}{-0.3\widebarargdepth}%
  \makebox[0pt][l]{\hspace{0.3\widebarargheight}%
    \hspace{0.3\widebarargdepth}%
    \addtolength{\widebarargheight}{0.3ex}%
    \rule[\widebarargheight]{0.95\widebarargwidth}{0.1ex}}%
          {#1}}


\newcommand{\matsnorm}[2]{|\!|\!| #1 | \! | \!|_{{#2}}}

\newenvironment{carlist} {\begin{list}{} {\setlength{\topsep}{0in}
               \setlength{\partopsep}{0in} \setlength{\parsep}{0in}
               \setlength{\itemsep}{\parskip}
               \setlength{\leftmargin}{0.12in}
               \setlength{\rightmargin}{0.08in}
               \setlength{\listparindent}{0.05in}
               \setlength{\labelwidth}{0.08in}
               \setlength{\labelsep}{0.1in}
               \setlength{\itemindent}{.1in}}} {\end{list}}
               
\newcommand{\bcar}{\begin{carlist}}

\newcommand{\ecar}{\end{carlist}}

\newcommand{\widgraph}[2]{\includegraphics[keepaspectratio,width=#1]{#2}}

\newcommand{\Prob}{\ensuremath{\mathbb{P}}}
\newcommand{\Exs}{\ensuremath{\mathbb{E}}}

\newcommand{\Joint}{\ensuremath{U}}
\newcommand{\Jointcom}{\ensuremath{{U^{c}}}}

\newcommand{\Both}{\ensuremath{B}}
\newcommand{\Bothcom}{\ensuremath{{B^{c}}}}

\newcommand{\Sing}{\ensuremath{\Bothcom}}

\newcommand{\Uvar}{\ensuremath{\Delta}}

\newcommand{\Vvar}{\ensuremath{V}}

\newcommand{\Event}{\mathcal{E}}
\newcommand{\Tail}{\mathcal{T}}

\newcommand{\lammin}{\ensuremath{\lambda_{\operatorname{min}}}}
\newcommand{\lammax}{\ensuremath{\lambda_{\operatorname{max}}}}

\newcommand{\myvar}{\ensuremath{\operatorname{var}}}

\newcommand{\mprob}{\Prob}

\newcommand{\sahand}{\ensuremath{f}}
\newcommand{\sahandtil}{\ensuremath{\widetilde{\sahand}}}

\newcommand{\Smat}{\ensuremath{M}}
\newcommand{\Smattil}{\ensuremath{\widetilde{\Smat}}}

\newcommand{\Signmat}{\ensuremath{S}}
\newcommand{\Signmattil}{\ensuremath{\widetilde{\Signmat}}}

\newcommand{\Proj}[1]{\ensuremath{\Pi_{#1}}}
\newcommand{\myproj}[1]{\Proj{#1}}

\newcommand{\order}{\ensuremath{\mathcal{O}}}

\newcommand{\myorder}{\ensuremath{\order(\sqrt{\spindex/\numobs})}}

\newcommand{\edist}{\ensuremath{\stackrel{d}{=}}}

\newcommand{\randvar}{\ensuremath{\sigma^2}}
\newcommand{\randvartil}{\ensuremath{\widetilde{\sigma}^2}}

\newcommand{\Uterma}{\ensuremath{T_a}}
\newcommand{\Utermb}{\ensuremath{T_b}}

\newcommand{\Smatd}{\ensuremath{D}}
\newcommand{\Quni}{\ensuremath{Q}}

\newcommand{\Specmat}{\ensuremath{R}}
\newcommand{\Amat}{\ensuremath{A}}

\newcommand{\vecone}{\ensuremath{\vec{1}}}

\newcommand{\mindex}{\ensuremath{m}}
\newcommand{\martin}{\ensuremath{v}}

\newcommand{\qvec}{\ensuremath{q}}

\newcommand{\mineig}{\ensuremath{C}}
\newcommand{\Covmat}{\Sigma}

\newcommand{\Sail}{\ensuremath{\mathcal{S}}}

\newcommand{\Aevent}{\ensuremath{\mathbb{A}}}

\newcommand{\incopar}{\ensuremath{\gamma}}

\newcommand{\jind}{\ensuremath{k}} 
\newcommand{\kind}{\ensuremath{j}}

\newcommand{\keypar}{\ensuremath{\xi}}

\newcommand{\Cmin}{\ensuremath{C_{\min}}}

\newcommand{\Dmax}{\ensuremath{D_{\max}}}

\newcommand{\Mterm}{\ensuremath{M}}

\newcommand{\Kball}{\ensuremath{\mathbb{K}}}

\newcommand{\inprod}[2]{\ensuremath{\langle #1 , \, #2 \rangle}}
\newcommand{\binprod}[2]{\ensuremath{\big\langle #1 , \, #2 \big\rangle}}

\newcommand{\phiprob}{\ensuremath{\phi_1(\keypar, \pdim, \spindex)}}
\newcommand{\phiprobtwo}{\ensuremath{\phi_2(\tautwo, \keypar, \numobs,
\pdim, \spindex)}}

\newcommand{\Bou}{\ensuremath{b_1(\keypar, \relaxn, \numobs, \spindex)}}
\newcommand{\Boutwo}{\ensuremath{b_2(\keypar, \relaxn, \numobs, \spindex)}}

\theoremstyle{definition}
\newtheorem{asss}{Assumption}
\newcommand{\bass}{\begin{asss}}
\newcommand{\eass}{\end{asss}}

\newcommand{\Xmat}[1]{\ensuremath{X^{#1}}}
\newcommand{\Xmatf}[2]{\ensuremath{X^{#1}_{#2}}}
\newcommand{\Xmatt}[1]{\ensuremath{(X^{#1})^T}}
\newcommand{\Xmatft}[2]{\ensuremath{(X^{#1}_{#2})^T}}

\newcommand{\yobs}[1]{\ensuremath{y^{#1}}}
\newcommand{\wnoise}[1]{\ensuremath{w^{#1}}}

\newcommand{\Best}{\ensuremath{\widehat{B}}}
\newcommand{\Bstar}{\ensuremath{\widebar{B}}}

\newcommand{\Bwit}{\ensuremath{\widetilde{B}}}
\newcommand{\bwit}[1]{\ensuremath{\widetilde{\beta}^{#1}}}

\newcommand{\bvec}[1]{\ensuremath{\beta^{#1}}}
\newcommand{\bvecstar}[1]{\ensuremath{\widebar{\beta}^{#1}}}
\newcommand{\bstar}[1]{\ensuremath{\widebar{\beta}^{\, #1}}}
\newcommand{\best}[1]{\ensuremath{\widehat{\beta}^{\,#1}}}

\newcommand{\numreg}{\ensuremath{r}}
\newcommand{\pind}{\ensuremath{i}}
\newcommand{\pindtwo}{\ensuremath{j}}

\newcommand{\Dwit}{\ensuremath{\widetilde{Z}}}
\newcommand{\Dwitf}[1]{\ensuremath{\widetilde{Z}_{#1}}}
\newcommand{\Bwitf}[1]{\ensuremath{\widetilde{B}_{#1}}}
\newcommand{\dwit}[1]{\ensuremath{\widetilde{z}^{\,#1}}}
\newcommand{\dwitf}[2]{\ensuremath{\widetilde{z}^{\,#1}_{#2}}}

\newcommand{\bnorm}[3]{\ensuremath{\| #1\|_{\ell_{#2}/\ell_{#3}}}}
\newcommand{\bnorminf}[1]{\ensuremath{\bnorm{#1}{1}{\infty}}}
\newcommand{\bnormq}[2]{\ensuremath{\bnorm{#1}{1}{#2}}}

\newcommand{\Nuvar}[1]{\ensuremath{\Delta^{#1}}}

\newcommand{\Uvarf}[2]{\ensuremath{\Uvar^{#1}_{#2}}}
\newcommand{\Covmati}[1]{\ensuremath{\Sigma^{#1}}}
\newcommand{\Covmatif}[2]{\ensuremath{\Sigma^{#1}_{#2}}}
\newcommand{\Wnew}[2]{\ensuremath{W^{#1}_{#2}}}

\newcommand{\Yif}[2]{\ensuremath{Y^{#1}_{#2}}}

\newcommand{\mysign}{\ensuremath{b}}

\newcommand{\tautwo}{\ensuremath{\kappa}}

\newcommand{\qpar}{\ensuremath{q}}

\newcommand{\Mset}{\ensuremath{\mathbb{M}}}

\newcommand{\delpar}{\ensuremath{\delta}}

\newcommand{\mybetamin}{\ensuremath{\Bstar_{\operatorname{min}}}}
\newcommand{\mybetagap}{\ensuremath{\Bstar_{\operatorname{gap}}}}

\newcommand{\bdiff}{\ensuremath{\Bstar_{\operatorname{diff}}}}

\newcommand{\mynumregspind}{\ensuremath{|\Joint|}}


\makeatletter
\long\def\@makecaption#1#2{
  \vskip 0.8ex
  \setbox\@tempboxa\hbox{\small {\bf #1:} #2}
  \parindent 1.5em  
  \dimen0=\hsize
  \advance\dimen0 by -3em
  \ifdim \wd\@tempboxa >\dimen0
  \hbox to \hsize{
    \parindent 0em
    \hfil
    \parbox{\dimen0}{\def\baselinestretch{0.96}\small
      {\bf #1.} #2
    }
    \hfil}
  \else \hbox to \hsize{\hfil \box\@tempboxa \hfil}
  \fi
}
\makeatother

\long\def\comment#1{}

\makeatletter
\def\@cite#1#2{[\if@tempswa #2 \fi #1]}
\makeatother

\setlength{\textwidth}{\paperwidth}
\addtolength{\textwidth}{-6cm}
\setlength{\textheight}{\paperheight}
\addtolength{\textheight}{-4cm}
\addtolength{\textheight}{-1.1\headheight}
\addtolength{\textheight}{-\headsep}
\addtolength{\textheight}{-\footskip}
\setlength{\oddsidemargin}{0.5cm}
\setlength{\evensidemargin}{0.5cm}



\begin{document}


\begin{center}

{\bf{\LARGE{Simultaneous support recovery in high dimensions: Benefits
               and perils of block
               $\ell_1/\ell_\infty$-regularization}}}

\vspace*{.2in}

{\large{
\begin{tabular}{ccc}
  Sahand Negahban$^{\star}$ & &  Martin J. Wainwright$^{\dagger,\star}$ \\
\end{tabular}
}}

\vspace*{.2in}

\begin{tabular}{c}
Department of Statistics$^\dagger$, and \\
Department of Electrical Engineering and Computer Sciences$^\star$ \\
UC Berkeley,  Berkeley, CA  94720
\end{tabular}

\vspace*{.2in}

May 5, 2009
\vspace*{.2in}

\begin{tabular}{c}
Technical Report, \\
Department of Statistics,  UC Berkeley
\end{tabular}

\end{center}


\begin{abstract}

Given a collection of $r \geq 2$ linear regression problems in $\pdim$
dimensions, suppose that the regression coefficients share partially
common supports.  This set-up suggests the use of
$\ell_{1}/\ell_{\infty}$-regularized regression for joint estimation
of the $\pdim \times \numreg$ matrix of regression coefficients.  We
analyze the high-dimensional scaling of
$\ell_1/\ell_\infty$-regularized quadratic programming, considering
both consistency rates in $\ell_\infty$-norm, and also how the minimal
sample size $n$ required for performing variable selection grows as a
function of the model dimension, sparsity, and overlap between the
supports.  We begin by establishing bounds on the $\ell_\infty$-error
as well sufficient conditions for exact variable selection for fixed
design matrices, as well as designs drawn randomly from general
Gaussian matrices.  Our second set of results applies to $\numreg = 2$
linear regression problems with standard Gaussian designs whose
supports overlap in a fraction $\alpha \in [0,1]$ of their entries:
for this problem class, we prove that the
$\ell_{1}/\ell_{\infty}$-regularized method undergoes a phase
transition---that is, a sharp change from failure to
success---characterized by the rescaled sample size
$\theta_{1,\infty}(n, p, s, \alpha) = n/\{(4 - 3 \alpha) s \log(p-(2-
\alpha) \, s)\}$.  More precisely, given sequences of problems
specified by $(n, p, s, \alpha)$, for any $\delta > 0$, the
probability of successfully recovering both supports converges to $1$
if $\theta_{1, \infty}(n, p, s, \alpha) > 1+\delta$, and converges to
$0$ for problem sequences for which $\theta_{1,\infty}(n,p,s, \alpha)
< 1 - \delta$.  An implication of this threshold is that use of
$\ell_1 / \ell_{\infty}$-regularization yields improved statistical
efficiency if the overlap parameter is large enough ($\alpha > 2/3$),
but has \emph{worse} statistical efficiency than a naive Lasso-based
approach for moderate to small overlap ($\alpha < 2/3$).  Empirical
simulations illustrate the close agreement between these theoretical
predictions, and the actual behavior in practice.  These results
indicate that some caution needs to be exercised in the application of
$\ell_1/\ell_\infty$ block regularization: if the data does not match
its structure closely enough, it can impair statistical performance
relative to computationally less expensive schemes.\footnote{This work
was presented in part at the NIPS 2008 conference in Vancouver,
Canada, December 2008.  Supported in part by NSF grants DMS-0528488,
DMS-0605165, and CCF-0545862.}
\end{abstract}

\section{Introduction}
\label{SecIntro}

The area of high-dimensional statistical inference is concerned with
the behavior of models and algorithms in which the dimension $\pdim$
is comparable to, or possibly even larger than the sample size
$\numobs$.  In the absence of additional structure, it is well-known
that many standard procedures---among them linear regression and
principal component analysis---are not consistent unless the ratio
$\pdim/\numobs$ converges to zero.  Since this scaling precludes
having $\pdim$ comparable to or larger than $\numobs$, an active line
of research is based on imposing structural conditions on the data
(e.g., sparsity, manifold constraints, or graphical model structure),
and studying the high-dimensional consistency (or inconsistency) of
various types of estimators.

This paper deals with high-dimensional scaling in the context of
solving multiple regression problems, where the regression vectors are
assumed to have shared sparse structure.  More specifically, suppose
that we are given a collection of $\numreg$ different linear
regression models in $\pdim$ dimensions, with regression vectors
$\bstar{\pind} \in \real^\pdim$, for $\pind =1, \ldots, \numreg$.  We
let $\Sset(\bstar{\pind}) = \{j \, \mid \, \bstar{\pind}_j \neq 0 \}$
denote the support set of $\bstar{\pind}$.  In many
applications---among them sparse approximation, graphical model
selection, and image reconstruction---it is natural to impose a
sparsity constraint, corresponding to restricting the cardinality
$|\Sset(\bstar{\pind})|$ of each support set.  Moreover, one might
expect some amount of overlap between the sets $\Sset(\bstar{\pind})$
and $\Sset(\bstar{j})$ for indices $\pind \neq j$ since they
correspond to the sets of active regression coefficients in each
problem.  Let us consider some examples to illustrate:
\begin{itemize}
\item Consider the problem of image denoising or compression, say
using a wavelet transform or some other type of multiresolution
basis~\cite{Mallat}. It is well known that natural images tend to have
sparse representations in such bases~\cite{Simoncelli98e}. Moreover,
similar images---say the same scene taken from multiple
cameras---would be expected to share a similar subset of active
features in the reconstruction.  Consequently, one might expect that
using a block-regularizer that enforces such joint sparsity could lead
to improved image denoising or compression.
\item Consider the problem of identifying the structure of a Markov
network or graphical model~\cite{Jordan_book1} based on a collection
of samples (e.g., such as observations of a social network).  For
networks with a single parameter per edge (e.g., Gaussian
models~\cite{Meinshausen06}, Ising models~\cite{RavWaiLaf08}), a line
of recent work has shown that $\ell_1$-based methods can be successful
in recovering the network structure.  However, many graphical models
have multiple parameters per edge (e.g., for discrete models with
non-binary state spaces), and it is natural that the subset of
parameters associated with a given edge are zero (or non-zero) in a
grouped manner.  Thus, any method for recovering the graph structure
should impose a block-structured regularization that groups together
the subset of parameters associated with a single edge.
\item Finally, consider a standard problem in genetic analysis: given
a set of gene expression arrays, where each array corresponds to a
different patient but the same underlying tissue type (e.g., tumor),
the goal is to discover the subset of features relevant for tumorous
growths.  This problem can be expressed as a joint regression problem,
again with a shared sparsity constraint coupling together the
different patients.  In this context, the recent work of Liu et
al.~\cite{LiuLafWas08} shows that imposing additional structural
constraints can be beneficial (e.g., they are able to greatly reduce
the number of expressed genes while maintaining the same prediction
performance).
\end{itemize}

\noindent Given these structural conditions of shared sparsity in these and
other applications, it is reasonable to consider how this common
structure can be exploited so as to increase the statistical
efficiency of estimation procedures.

There is now a substantial and relatively mature body of work on
$\ell_1$-regularization for estimation of sparse models, dating back
to the introduction of the Lasso and basis
pursuit~\cite{Tibshirani96,Chen98}. With contributions from various
researchers
(e.g.,~\cite{DonTan06,Meinshausen06,Tropp06,Zhao06,BiRiTsy08}), there
is now a fairly complete theory of the behavior of the Lasso for
high-dimensional sparse estimation.  A more recent line of work
(e.g.,~\cite{Turlach05,Kim06,Obo07,Tro06,ZhaRoc06}), motivated by
applications in which block or hierarchical structure arises, has
proposed the use of block $\ell_{a,b}$ norms for various $a,b \in [1,
\infty]$.  Of particular relevance to this paper is the block
$\ell_1/\ell_\infty$ norm, proposed initially by Turlach et
al.~\cite{Turlach05} and Tropp et al.~\cite{Tro06}.  This form of
block regularization is a special case of the more general family of
composite or hierarchical penalties, as studied by Zhao et
al.~\cite{ZhaRoc06}.

Various authors have empirically demonstrated that block
regularization schemes can yield better performance for different data
sets~\cite{ZhaRoc06,Obo07,LiuLafWas08}.  Some recent work by
Bach~\cite{Bach08} has provided consistency results for
$\ell_1/\ell_2$ block-regularization schemes under classical scaling,
meaning that $\numobs \rightarrow +\infty$ with $\pdim$ fixed. Meier
et al.~\cite{Mei07} has established high-dimensional consistency for
the predictive risk of $\ell_1/\ell_2$ block-regularized logistic
regression.  The papers~\cite{Liu08,NarRin08,RavLiuLafWas08} have
provided high-dimensional consistency results for $\ell_1/\ell_q$
block regularization for support recovery using fixed design matrices,
but the rates do not provide sharp differences between the case $q =
1$ and $q > 1$.  

To date, there has a relatively limited amount of theoretical work
characterizing if and when the use of block regularization schemes
actually leads to gains in statistical efficiency.  As we elaborate
below, this question is significant due to the greater computational
cost involved in solving block-regularized convex programs.  In the
case of $\ell_1/\ell_2$ regularization, concurrent work by Obozinski
et al~\cite{OboWaiJor08} (involving a subset of the current authors)
has shown that that the $\ell_1/\ell_2$ method can yield statistical
gains up to a factor of $r$, the number of separate regression
problems; more recent concurrent work~\cite{HuaZha09,Lou09} has
provided related high-dimensional consistency results for
$\ell_1/\ell_2$ regularization, emphasizing the gains when the number
of tasks $r$ is much larger than $\log \pdim$.

  This paper considers this issue in the context of variable selection
using block $\ell_1/\ell_\infty$ regularization.  Our main
contribution is to obtain some precise---and arguably
surprising---insights into the benefits and dangers of using block
$\ell_1/\ell_\infty$ regularization, as compared to simpler
$\ell_1$-regularization (separate Lasso for each regression
problem). We begin by providing a general set of sufficient conditions
for consistent support recovery for both fixed design matrices, and
random Gaussian design matrices.  In addition to these basic
consistency results, we then seek to characterize rates, for the
particular case of standard Gaussian designs, in a manner precise
enough to address the following questions:
\begin{enumerate}
\item[(a)] First, under what structural assumptions on the data does
the use of $\ell_1/\ell_\infty$ block-regularization provide a
quantifiable reduction in the scaling of the sample size $\numobs$, as
a function of the problem dimension $\pdim$ and other structural
parameters, required for consistency?
\item[(b)] Second, are there any settings in which
$\ell_1/\ell_\infty$ block-regularization can be harmful relative to
computationally less expensive procedures?
\end{enumerate}
Answers to these questions yield useful insight into the
\emph{tradeoff between computational and statistical efficiency} in
high-dimensional inference.  Indeed, the convex programs that arise
from using block-regularization typically require a greater
computational cost to solve.  Accordingly, it is important to
understand under what conditions this increased computational cost
guarantees that fewer samples are required for achieving a fixed level
of statistical accuracy.  

The analysis of this paper gives conditions on the designs and
regression matrix $\Bstar$ for which $\ell_1/\ell_\infty$ yields
improvements (question (a)), and also shows that if there is
sufficient mismatch between the regression matrix $\Bstar$ and the
$\ell_1/\ell_\infty$ norm, then use of this regularizer actually
impairs statistical efficiency relative to a naive $\ell_1$-approach.
As a representative instance of our theory, consider the special case
of standard Gaussian design matrices and two regression problems
($\numreg = 2$), with the supports $\Sset(\bstar{1})$ and
$\Sset(\bstar{2})$ each of size $\spindex$ and overlapping in a
fraction $\overlap \in [0,1]$ of their entries.  For this problem, we
prove that block $\ell_1/\ell_\infty$ regularization undergoes a phase
transition---meaning a \emph{sharp threshold between success and
recovery}---that is specified by the rescaled sample size
\begin{eqnarray}
\label{EqnDefnOrder}
\orpar(\numobs, \pdim, \spindex, \overlap) & \defn & \frac{\numobs}{(4
  - 3 \overlap) \spindex \log(p-(2-\overlap) \spindex)}.
\end{eqnarray}
In words, for any $\delpar > 0$ and for scalings of the quadruple
$(\numobs, \pdim, \spindex, \overlap)$ such that $\orpar \geq 1 +
\delpar$, the probability of successfully recovering both
$\Sset(\bstar{1})$ and $\Sset(\bstar{2})$ converges to one, whereas
for scalings such that $\orpar \leq 1 - \delpar$, the probability of
success converges to zero.

Figure~\ref{FigSims} illustrates how the theoretical
threshold~\eqref{EqnDefnOrder} agrees with the behavior observed in
practice.  This figure plots the probability of successful recovery
using the block $\ell_1/\ell_\infty$ approach versus the rescaled
sample size $\numobs/ \{2 \spindex \log[\pdim - (2-\overlap) \spindex
\}$; the results shown here are for $r=2$ regression parameters.  The
plots show twelve curves, corresponding to three different problem
sizes $\pdim \in \{128, 256, 512 \}$ and four different values of the
overlap parameter $\overlap \in \{0.1, 0.3, 0.7, 1 \}$.  First, let us
focus on the set of curves labeled with $\overlap = 1$, corresponding
to case of complete overlap between the regression vectors.  Notice
how the curves for all three problem sizes $\pdim$, when plotted
versus the rescaled sample size, line up with one another; this
``stacking effect'' shows that the rescaled sample size captures the
phase transition behavior.  Similarly, for other choices of the
overlap, the sets of three curves (over problem size $\pdim$) exhibit
the same stacking behavior.  Secondly, note that the results are
consistent with the theoretical prediction~\eqref{EqnDefnOrder}:
\begin{figure}[h]
  \begin{center}
    \begin{tabular}{c}
      \widgraph{.7\textwidth}{./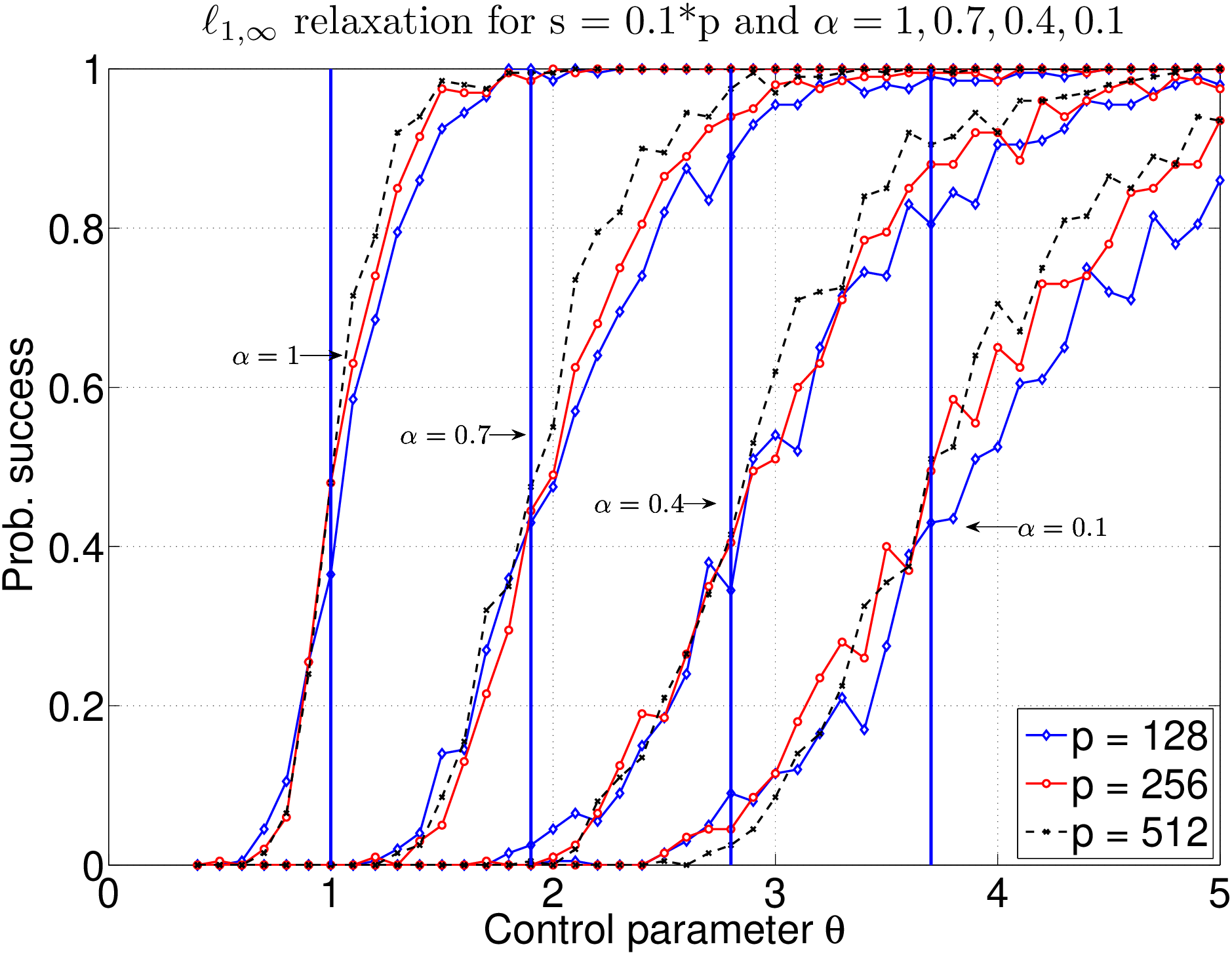}
    \end{tabular}
  \end{center}
\caption{Probability of success in recovering the joint signed
supports plotted against the rescaled sample size $\orlas \defn
\numobs/[2 \spindex \log(\pdim - (2-\overlap)\spindex))]$ for linear
sparsity $\spindex = 0.1 \pdim$.  Each stack of graphs corresponds to
a fixed overlap $\overlap$, as labeled on the figure.  The three
curves within each stack correspond to problem sizes $\pdim \in \{128,
256, 512 \}$; note how they all align with each other and exhibit
step-like behavior, consistent with Theorem~\ref{ThmPhase}.  The
vertical lines correspond to the thresholds $\orpar^*(\overlap)$
predicted by Theorem~\ref{ThmPhase}; note the close agreement between
theory and simulation.}
\label{FigSims}
\end{figure}
the stacks of curves shift to the right as the overlap parameter
$\overlap$ decreases from $1$ towards $0$, showing that problems with
less overlap require a larger rescaled sample size.  More interesting
is the sharpness of agreement in \emph{quantitative terms:} the
vertical lines in the center of each stack show the point at which our
theory~\eqref{EqnDefnOrder} predicts that the method should transition
from failure to success.

 By comparison to previous theory on the behavior of the Lasso
(ordinary $\ell_1$-regularized quadratic programming), the
scaling~\eqref{EqnDefnOrder} has two interesting implications.  For
the $\spindex$-sparse regression problem with standard Gaussian
designs, the Lasso has been shown~\cite{Wainwright06a} to transition
from success to failure as a function of the rescaled sample size
\begin{eqnarray}
\label{EqnLassoThresh}
\orlas(\numobs, \pdim, \spindex) & \defn & 
 \frac{\numobs}{2 \spindex \log(\pdim - \spindex)}.
\end{eqnarray}
In particular, under the conditions imposed here, solving two separate
Lasso problems, one for each regression problem, would recover both
supports for problem sequences $(\numobs, \pdim, \spindex)$ such that
$\orlas > 1$.  Thus, one consequence of our analysis is to
characterize the \emph{relative statistical efficiency} of
$\ell_1/\ell_\infty$ regularization versus ordinary
$\ell_1$-regularization, as described by the ratio $R \defn
\frac{\orpar}{\orlas}$.

Our theory predicts that (disregarding some $o(1)$ factors) the
relative efficiency scales as $R(\overlap) \sim \frac{4 - 3
\alpha}{2}$, which (as we show later) shows excellent agreement with
empirical behavior in simulation.  Our characterization of
$R(\overlap)$ confirms that if the regression matrix $\Bstar$ is
well-aligned with the block $\ell_1/\ell_\infty$ regularizer---more
specifically for overlaps $\overlap \in [\frac{2}{3}, 1]$---then
block-regularization increases statistical efficiency.  On the other
hand, our analysis also conveys a \emph{cautionary message}: if the
overlap is too small---more precisely, if $\overlap < 2/3$---then
block $\ell_{1,\infty}$ is actually relative to the naive Lasso-based
approach.  This fact illustrates that some care is required in the
application of block regularization schemes.

In terms of proof techniques, the analysis of this paper is
considerably more delicate than the analogous arguments required to
show support consistency for the
Lasso~\cite{Meinshausen06,Wainwright06a,Zhao06}.  The major
difference---and one that presents substantial technical
challenges---is that the sub-differential\footnote{As we describe in
more detail in Section~\ref{SecSubdiff}, the sub-differential is the
appropriate generalization of gradient to convex functions that are
allowed to have ``corners'', like the $\ell_1$ and
$\ell_1/\ell_\infty$ norms; the standard
books~\cite{Rockafellar,Hiriart1} contain more background on
sub-differentials and their properties.}  of the block
$\ell_1/\ell_\infty$ is a much more subtle object than the
subdifferential of the ordinary $\ell_1$-norm.  In particular, the
$\ell_1$-norm has an ordinary derivative whenever the coefficient
vector is non-zero.  In contrast, even for non-zero rows of the
regression matrix, the block $\ell_1/\ell_\infty$ norm may be
non-differentiable, and these non-differentiable points play a key
role in our analysis.  (See Section~\ref{SecSubdiff} for more detail
on the sub-differential of this block norm.)  As we show, it is the
Frobenius norm of the sub-differential on the regression matrix
support that controls high-dimensional scaling.  For the ordinary
$\ell_1$-norm, this Frobenius norm is always equal to $\spindex$,
whereas for matrices with $\numreg=2$ columns and $\overlap$ fraction
overlap, this Frobenius norm can be as small as $\frac{(4 - 3\overlap)
\, \spindex}{2}$.  As our analysis reveals, it is precisely the
differing structures of these sub-differentials that leads to
different high-dimensional scaling for $\ell_1$ versus
$\ell_{1,\infty}$ regularization.

The remainder of this paper is organized as follows.  In
Section~\ref{SecProblem}, we provide a precise description of the
problem.  Section~\ref{SecMain} is devoted to the statement of our
main results, some discussion of their consequences, and illustration
by comparison to empirical simulations.  In Section~\ref{SecProof}, we
provide an outline of the proof, with the technical details of many
intermediate lemmas deferred to the appendices.

\noindent \paragraph{Notational conventions:} For the convenience of
the reader, we summarize here some notation to be used throughout the
paper. We reserve the index \mbox{$\pind \in \{1, \ldots, \numreg\}$}
as a superscript in indexing the different regression problems, or
equivalently the columns of the matrix $\Bstar \in \real^{\pdim \times
\numreg}$.  Given a design matrix $\Xmat{} \in \real^{\numobs \times
\pdim}$ and a subset $S \subseteq \{1, \ldots, \pdim\}$, we use
$\Xmatf{}{S}$ to denote the $\numobs \times |\Sset|$ sub-matrix
obtained by extracting those columns indexed by $S$.  For a pair of
matrices $A \in \real^{m \times \ell}$ and $B \in \real^{m \times n}$,
we use the notation $\binprod{A}{B} \defn A^T B$ for the resulting
$\ell \times n$ matrix.

We use the following standard asymptotic notation: for functions
$f,g$, the notation \mbox{$f(n) = \order(g(n))$} means that there
exists a fixed constant $0 < C < +\infty$ such that $f(n) \leq C
g(n)$; the notation $f(n) = \Omega(g(n))$ means that $f(n) \geq C
g(n)$, and $f(n) = \Theta(g(n))$ means that $f(n) = \order(g(n))$ and
$f(n) = \Omega(g(n))$.


\section{Problem set-up}
\label{SecProblem}

We begin by setting up the problem to be studied in this paper,
including multivariate regression and family of block-regularized
programs for estimating sparse vectors.

\subsection{Multivariate regression and block regularization schemes}

In this paper, we consider the following form of multivariate
regression.  For each \mbox{$\pind = 1, \ldots, \numreg$,} let
$\bvecstar{\pind} \in \real^\pdim$ be a regression vector, and
consider the $\numreg$-variate linear regression problem
\begin{eqnarray}
\label{EqnMeasurement}
\yobs{\pind} & = & \Xmat{\pind} \bvecstar{\pind} + \wnoise{\pind},
\qquad \pind = 1, 2, \ldots, \numreg.
\end{eqnarray}
Here each $\Xmat{\pind} \in \real^{\numobs \times \pdim}$ is a design
matrix, possibly different for each vector $\bvecstar{\pind}$, and
$\wnoise{\pind} \in \real^\numobs$ is a noise vector.  We assume that
the noise vectors $\wnoise{\pind}$ and $\wnoise{\pindtwo}$ are
independent for different regression problems $\pind \neq \pindtwo$.
In this paper, we assume that each $\wnoise{\pind}$ has a multivariate
Gaussian $N(0, \sigma^2 I_{\numobs \times \numobs})$ distribution.
However, we note that qualitatively similar results will hold for any
noise distribution with sub-Gaussian tails (see the book~\cite{BulKoz}
for more background on sub-Gaussian variates).

For compactness in notation, we frequently use $\Bstar$ to denote the
$\pdim \times \numreg$ matrix with $\bstar{\pind} \in \real^\pdim$ as
the $\pind^{th}$ column.  Given a parameter $\qpar \in [1, \infty]$,
we define the $\ell_1/\ell_\qpar$ block-norm as follows:
\begin{eqnarray}
\label{EqnDefnBnormq}
\bnormq{\Bstar}{\qpar} & \defn & \sum_{\jind = 1}^\pdim
\|(\bstar{1}_\jind, \bstar{2}_\jind, \ldots,
\bstar{\numreg}_\jind)\|_\qpar,
\end{eqnarray}
corresponding to applying the $\ell_\qpar$ norm to each row of
$\Bstar$, and the $\ell_1$-norm across all of these blocks.  We note
that all of these block norms are special cases of the CAP family of
penalties~\cite{ZhaRoc06}.

This family of block-regularizers~\eqref{EqnDefnBnormq} suggests a
natural family of $M$-estimators for estimating $\Bstar$, based on
solving the block-$\ell_1/\ell_\qpar$-regularized quadratic program
\begin{eqnarray}
\label{EqnGenBlockReg}
\Best & \in & \arg \min_{B \in \real^{\pdim \times \numreg}} \big \{
\frac{1}{2 \numobs} \sum_{\pind = 1}^\numreg \|\yobs{\pind} -
\Xmat{\pind} \beta^\pind\|_2^2  + \regpar \bnormq{B}{\qpar} \big \},
\end{eqnarray}
where $\regpar > 0$ is a user-defined regularization parameter.  Note
that the data term is separable across the different regression
problems $\pind = 1, \ldots, \numreg$, due to our assumption of
independence on the noise vectors.  Any coupling between the different
regression problems is induced by the block-norm regularization.

In the special case of univariate regression ($\numreg = 1$), the
parameter $\qpar$ plays no role, and the block-regularized
scheme~\eqref{EqnBlockReg} reduces to the
Lasso~\cite{Tibshirani96,Chen98}.  If $\qpar = 1$ and $\numreg \geq
2$, the block-regularization function (like the data term) is
separable across the different regression problems $\pind = 1, \ldots,
\numreg$, and so the scheme~\eqref{EqnBlockReg} reduces to solving
$\numreg$ separate Lasso problems.  For $\numreg \geq 2$ and $\qpar =
2$, the program~\eqref{EqnBlockReg} is frequently referred to as the
group Lasso~\cite{Kim06,Obo07}.  Another important
case~\cite{Turlach05,Tro06} and the focus of this paper is the setting
$\qpar = \infty$ and $\numreg \geq 2$, which we refer to as block
$\ell_1/\ell_\infty$ regularization.

The motivation for using block $\ell_1/\ell_\infty$ regularization is
to encourage \emph{shared sparsity} among the columns of the
regression matrix $B$.  Geometrically, like the $\ell_1$ norm that
underlies the ordinary Lasso, the $\ell_1/\ell_\infty$ block norm has
a polyhedral unit ball.  However, the block norm captures potential
interactions between the columns $\beta^\pind$ in the matrix $B$.
Intuitively, taking the maximum encourages the elements
$(\beta^1_\jind, \beta^2_\jind \ldots, \beta^\numreg_\jind)$ in any
given row $\jind = 1, \ldots, \pdim$ to be zero simultaneously, or to
be non-zero simultaneously.  Indeed, if $\beta^\pind_\jind \neq 0$ for
at least one $\pind \in \{1, \ldots, \numreg \}$, then there is no
additional penalty to have $\beta^\pindtwo_\jind \neq 0$ as well, as
long as $|\beta^\pindtwo_\jind| \leq |\beta^\pind_\jind|$.

\subsection{Estimation in $\ell_\infty$ norm and support recovery}

For a given $\regpar > 0$, suppose that we solve the block
$\ell_1/\ell_\infty$ program, thereby obtaining an estimate
\begin{eqnarray}
\label{EqnBlockReg}
\Best & \in & \arg \min_{B \in \real^{\pdim \times \numreg}} \big \{
\frac{1}{2 \numobs} \sum_{\pind = 1}^\numreg \|\yobs{\pind} -
\Xmat{\pind} \beta^\pind\|_2^2 + \regpar \bnormq{B}{\infty} \big \},
\end{eqnarray}
We note that under high-dimensional scaling ($\pdim \gg \numobs$),
this convex program~\eqref{EqnBlockReg} is not necessarily strictly
convex, since the quadratic term is rank deficient and the block
$\ell_1/\ell_\infty$ norm is polyhedral, which implies that the
program is not strictly convex.  However, a consequence of our
analysis is that under appropriate conditions, the optimal solution
$\Best$ is in fact unique.

In this paper, we study the accuracy of the estimate $\Best$, as a
function of the sample size $\numobs$, regression dimensions $\pdim$
and $\numreg$, and the sparsity index $\spindex = \max_{\pind = 1,
\ldots, \numreg} |\Sset(\bstar{\pind})|$.  There are various metrics
with which to assess the ``closeness'' of the estimate $\Best$ to the
truth $\Bstar$, including predictive risk, various types of norm-based
bounds on the difference $\Best-\Bstar$, and variable selection
consistency.  In this paper, we prove results bounding the
$\ell_{\infty}/\ell_\infty$ difference
\begin{eqnarray*}
\bnorm{\Best-\Bstar}{\infty}{\infty} & \defn & \max_{\jind = 1,
\ldots, \pdim} \; \max_{\pind = 1, \ldots, \numreg} |\Best^\pind_\jind
- \Bstar^\pind_\jind|.
\end{eqnarray*}
In addition, we prove results on support recovery criteria.  Recall
that for each vector $\bstar{\pind} \in \real^\pdim$, we use
$\Sset(\bstar{\pind}) = \{\jind \, \mid \, \bstar{\pind}_\jind \neq 0
\}$ to denote its support set.  The problem of \emph{row support
recovery} corresponds to recovering the set
\begin{eqnarray}
\Joint & \defn &  \bigcup_{\pind=1}^\numreg \Sset(\bstar{\pind}),
\end{eqnarray}
corresponding to the subset $\Joint \subseteq \{1, \ldots, \pdim \}$
of indices that are active in at least one regression problem.  Note
that the cardinality of $|\Joint|$ is upper bounded by $\numreg
\spindex$, but can be substantially smaller (as small as $\spindex$)
if there is overlap among the different supports.

As discussed at more length in Appendix~\ref{SecIndividualSupports},
given an estimate of the row support of $\Bstar$, it is possible to
either use additional structure of the solution $\Best$ or perform
some additional computation to recover \emph{individual signed
supports} of the columns of $\Bstar$.  To be precise, define the sign
function
\begin{eqnarray}
\sign(t) & = & \begin{cases} +1 & \mbox{if $t > 0$} \\
                             0 & \mbox{if $t = 0$} \\
                             -1 & \mbox{if $t < 0$.} 
               \end{cases}
\end{eqnarray}
Then the recovery of individual signed supports means estimating the
signed vectors with entries $\sign(\bstar{\pind}_\jind)$, for each
$\pind = 1, 2, \ldots, \numreg$ and for all $\jind = 1, 2, \ldots,
\pdim$.  Interestingly, when using block $\ell_1/\ell_\infty$
regularization, there are multiple ways in which the support (or
signed support) can be estimated, depending on whether we use primal
or dual information from an optimal solution.  

The \emph{dual recovery method} involves the following steps.  First,
solve the block-regularized program~\eqref{EqnBlockReg}, thereby
obtaining an primal solution $\Best \in \real^{\pdim \times \numreg}$.
For each row $\jind = 1, \ldots, \pdim$, compute the set $\Mset_\jind
\defn \arg \max \limits_{\pind = 1, \ldots, \numreg}
|\best{\pind}_\jind|$.  Estimate the support union via $\estim{\Joint}
= \bigcup \limits_{\pind = 1, \ldots, \numreg} S(\best{\pind})$, and
estimate the signed support vectors
\begin{eqnarray}
\label{EqnSignDua}
[\SignDua(\best{\pind}_\jind)] & = & \begin{cases}
\sign(\best{\pind}_\jind) & \mbox{if $\pind \in \Mset_\jind$} \\ 0 &
\mbox{otherwise.}
  \end{cases}
\end{eqnarray}
As our development will clarify, this procedure~\eqref{EqnSignDua}
corresponds to estimating the signed support on the basis of a dual
optimal solution associated with the optimal primal solution.  We
discuss the primal-based recovery method and its differences with the
dual-based method at more length in
Appendix~\ref{SecIndividualSupports}.


\section{Main results and their consequences}
\label{SecMain}

In this section, we provide precise statements of the main results of
this paper.  Our first main result (Theorem~\ref{ThmDetDesign})
provides sufficient conditions for deterministic design matrices
$\Xmat{1}, \ldots, \Xmat{\numreg}$, whereas our second main result
(Theorem~\ref{ThmGauss}) provides sufficient conditions for design
matrices drawn randomly from sub-Gaussian ensembles.  Both of these
results allow for an arbitrary number $\numreg$ of regression
problems, and the random design case allows for random Gaussian
designs $\Xmat{k}$ with i.i.d. rows and covariance matrix $\Covmati{k}
\in \real^{\pdim \times \pdim}, k = 1, \ldots, \numreg$. Not
surprisingly, these results show that the high-dimensional scaling of
block $\ell_1/\ell_\infty$ is \emph{qualitatively similar} to that of
ordinary $\ell_1$-regularization: for instance, in the case of random
Gaussian designs and bounded $\numreg$, our sufficient conditions
ensure that $\numobs = \Omega(\spindex \log \pdim)$ samples are
sufficient to recover the union of supports correctly with high
probability, which matches known results on the
Lasso~\cite{Wainwright06a}, as well as known information-theoretic
results on the problem of support recovery~\cite{Wainwright06_info}.

As discussed in the introduction, we are also interested in the more
refined question: can we provide necessary and sufficient conditions
that are sharp enough to reveal \emph{quantitative differences}
between ordinary $\ell_1$-regularization and block regularization?
Addressing this question requires analysis that is sufficiently
precise to control the constants in front of the rescaled sample size
$\numobs/\spindex \log(\pdim - \spindex)$ that controls the
performance of both $\ell_1$ and block $\ell_1/\ell_\infty$ methods.
Accordingly, in order to provide precise answers to this question, our
final two results concern the special case of $\numreg = 2$ regression
problems, both with supports of size $\spindex$ that overlap in a
fraction $\overlap$ of their entries, and with design matrices drawn
randomly from the standard Gaussian ensemble.  In this setting, our
final result (Theorem~\ref{ThmPhase}) shows that block
$\ell_1/\ell_\infty$ regularization undergoes a phase
transition---that is, a rapid change from failure to
success---specified by the rescaled sample size $\orpar(\numobs,
\pdim, \spindex, \overlap)$ previously
defined~\eqref{EqnDefnOrder}. We then discuss some consequences of
these results, and illustrate their sharpness with some simulation
results.

\subsection{Sufficient conditions for general deterministic and random designs}

In addition to the sample size $\numobs$, problem dimensions $\pdim$
and $\numreg$, sparsity index $\spindex$ and overlap parameter
$\overlap$, our results involve certain quantities associated with the
design matrices $\Xmatf{\pind}{}$.  To begin, in the deterministic
case, we assume that the columns of each design matrix
\mbox{$\Xmat{\pind}, \pind = 1, \ldots, \numreg$} are
normalized\footnote{The choice of the factor $2$ in this bound is for
later technical convenience.}  so that
\begin{eqnarray}
\label{EqnColMax}
\|\Xmat{\pind}_\jind\|_2^2 & \leq & 2 \numobs \qquad \mbox{for all
$\jind = 1, 2, \ldots \pdim$.}
\end{eqnarray}
More significantly, we require that the following \emph{incoherence
condition} on the design matrix be satisfied:
\begin{eqnarray}
\label{EqnMutIncoDet}
\incopar(\Covmati{}) & \defn & 1 - \max_{\ell = 1, \ldots,
  |\Jointcom|} \sum_{\pind=1}^\numreg \|
\binprod{\Xmatf{\pind}{\ell}}{\Xmatf{\pind}{\Joint}
  (\inprod{\Xmatf{\pind}{\Joint}}{\Xmatf{\pind}{\Joint}})^{-1}} \|_1
\; > \; 0.
\end{eqnarray}
For the case of the ordinary Lasso, conditions of this type are
known~\cite{Meinshausen06,Zhao06,Wainwright06a} to be both necessary
and sufficient for successful support recovery.\footnote{Some
work~\cite{MeiYu08} has shown that multi-stage methods can allow some
relaxation of this incoherence condition; however, as our main
interest is in understanding the sample complexity of ordinary
$\ell_1$ versus $\ell_1/\ell_\infty$ relaxations, we do not pursue
such extensions here.}

In addition, the statement of our results involve certain quantities
associated with the $|\Joint| \times |\Joint|$ matrices
$\frac{1}{\numobs}
\inprod{\Xmatf{\pind}{\Joint}}{\Xmatf{\pind}{\Joint}}$; in particular,
we define a lower bound on the minimum eigenvalue
\begin{eqnarray}
\label{EqnCmin}
\Cmin(\Xmat{}) & \leq & \min_{\pind =1, \ldots, \numreg } \lammin
\big( \frac{1}{\numobs}
\inprod{\Xmatf{\pind}{\Joint}}{\Xmatf{\pind}{\Joint}} \big),
\end{eqnarray}
as well as an upper bound maximum $\ell_{\infty, \infty}$-operator
norm of the inverses
\begin{eqnarray}
\label{EqnDmax}
\Dmax(\Xmat{}) & \geq & \max_{\pind = 1, \ldots, \numreg} \matsnorm{
\big(\frac{1}{\numobs} \inprod{\Xmatf{\pind}{\Joint}}
{\Xmatf{\pind}{\Joint}} \big)^{-1}}{\infty}.
\end{eqnarray}
Remembering that our analysis applies to to sequences
$\{\Xmat{}_{\numobs, \pdim} \}$ of design matrices, in the simplest
scenario, both of the bounding quantities $\Cmin$ and $\Dmax$ do not
scale with $(\numobs, \pdim, \spindex)$.  To keep notation compact, we
write $\Cmin$ and $\Dmax$ in the analysis to follow.

We also define the \emph{support minimum value}
\begin{eqnarray}
\label{EqnDefnBetamin}
\mybetamin & = & \min_{\jind \in \Joint} \max_{\pind =
1,\hdots,\numreg} |\bstar{\pind}_\jind|,
\end{eqnarray}
corresponding to the minimum value of the $\ell_\infty$ norm of any
row $\jind \in \Joint$.

\begin{theorem}[Sufficient conditions for deterministic designs]
\label{ThmDetDesign}
Consider the observation model~\eqref{EqnMeasurement} with design
matrices $\Xmat{\pind}$ satisfying the column bound~\eqref{EqnColMax}
and incoherence condition~\eqref{EqnMutIncoDet}. Suppose that we solve
the block-regularized $\ell_1/\ell_\infty$ convex
program~\eqref{EqnBlockReg} with regularization parameter $\relaxn^2
\geq \frac{4 \keypar \sigma^2}{\incopar^2} \frac{\numreg^2 + \numreg
\log (\pdim)}{\numobs}$ for some $\keypar > 1$.  Then with probability
greater than
\begin{eqnarray}
\phiprob & \defn & 1 - 2 \exp(-(\keypar-1) [\numreg + \log \pdim])
-2\exp(-(\keypar^2-1) \log (\numreg \spindex)),
\end{eqnarray}
we are guaranteed that
\begin{enumerate}
\item[(a)] The block-regularized program has a unique solution $\Best$
such that $\bigcup_{\pind=1}^\numreg \Sset(\best{\pind}) \subseteq
\Joint$.
\item[(b)] Moreover, the solution satisfies the elementwise
$\ell_\infty$-bound
\begin{eqnarray}
\label{EqnEllinfDet}
\bnorm{\Best-\Bstar}{\infty}{\infty} & \leq & \underbrace{\keypar
\sqrt{\frac{4\sigma^2}{\Cmin} \; \frac{\log \mynumregspind}{\numobs}}
+ \Dmax \, \relaxn}. \\
& & \qquad \qquad \Bou \nonumber
\end{eqnarray}
Consequently, as long as $\mybetamin \geq \Bou$, then
$\bigcup_{\pind=1}^\numreg \Sset(\best{\pind}) = \Joint$, so that the
solution $\Best$ correctly specifies the union of supports $\Joint$.
\end{enumerate}
\end{theorem}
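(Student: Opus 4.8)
The plan is to prove both claims through a \emph{primal--dual witness} (PDW) construction, the standard vehicle for exact support recovery of $\ell_1$-type estimators, here adapted to the block norm. The point of departure is the zero-subgradient characterization of optimality for the convex program~\eqref{EqnBlockReg}: a matrix $\Best$ is optimal if and only if there is a dual matrix $\Dwit \in \partial \bnormq{\Best}{\infty}$ for which, for each $\pind = 1, \ldots, \numreg$,
\begin{equation*}
-\frac{1}{\numobs}\binprod{\Xmat{\pind}}{\yobs{\pind} - \Xmat{\pind}\best{\pind}} + \relaxn\, \dwit{\pind} \; = \; 0 .
\end{equation*}
The feature distinguishing this from the Lasso analysis is the geometry of $\partial \bnormq{\cdot}{\infty}$: writing $\Dwit_\jind$ for the $\jind^{th}$ row of $\Dwit$, this row must lie in the subdifferential of the $\ell_\infty$-norm applied to the $\jind^{th}$ row of $\Best$, so that $\|\Dwit_\jind\|_1 \le 1$ always holds, with equality---and with support confined to the argmax coordinates $\Mset_\jind$ carrying the matching signs---whenever that row of $\Best$ is nonzero. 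It is exactly this coupling of the $\numreg$ columns through the row-wise dual variables that makes the argument more delicate than for the ordinary $\ell_1$ penalty.

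The construction proceeds as follows. First I would solve the \emph{oracle restricted} program, in which $\best{\pind}_\jind$ is forced to zero for every $\jind \in \Jointcom$ and the objective is minimized only over the rows indexed by $\Joint$. The eigenvalue lower bound~\eqref{EqnCmin}, namely $\lammin\big(\frac{1}{\numobs}\inprod{\Xmatf{\pind}{\Joint}}{\Xmatf{\pind}{\Joint}}\big) \ge \Cmin > 0$, makes each restricted subproblem strictly convex, so it has a unique minimizer $\best{\pind}_\Joint$ and an accompanying block of dual variables $\dwit{\pind}_\Joint$ satisfying stationarity on $\Joint$. I then \emph{define} the remaining dual variables on $\Jointcom$ by enforcing stationarity there,
\begin{equation*}
\dwitf{\pind}{\ell} \; = \; \frac{1}{\relaxn \numobs}\binprod{\Xmatf{\pind}{\ell}}{\yobs{\pind} - \Xmat{\pind}\best{\pind}}, \qquad \ell \in \Jointcom .
\end{equation*}
By construction the pair $(\Best, \Dwit)$, with $\best{\pind}_\Jointcom = 0$, satisfies every optimality condition \emph{except possibly} the dual-feasibility constraint $\|\Dwit_\ell\|_1 \le 1$ on the rows $\ell \in \Jointcom$.

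A standard strict-feasibility lemma then supplies part~(a): if in fact $\|\Dwit_\ell\|_1 < 1$ for every $\ell \in \Jointcom$, the pair $(\Best, \Dwit)$ certifies that $\Best$ is the \emph{unique} optimum of~\eqref{EqnBlockReg} and that $\bigcup_{\pind=1}^\numreg \Sset(\best{\pind}) \subseteq \Joint$. For the elementwise bound of part~(b), I would substitute the model~\eqref{EqnMeasurement} (using $\Sset(\bstar{\pind}) \subseteq \Joint$) into the stationarity condition on $\Joint$ and solve for the error,
\begin{equation*}
\best{\pind}_\Joint - \bstar{\pind}_\Joint \; = \; \Big(\frac{1}{\numobs}\inprod{\Xmatf{\pind}{\Joint}}{\Xmatf{\pind}{\Joint}}\Big)^{-1}\Big[\frac{1}{\numobs}\binprod{\Xmatf{\pind}{\Joint}}{\wnoise{\pind}} - \relaxn\,\dwit{\pind}_\Joint\Big] .
\end{equation*}
The first term is a maximum over the $\le \numreg\spindex$ entries of $\Joint$ of Gaussian variates whose variance is controlled by $\sigma^2/\Cmin$ through~\eqref{EqnCmin}; a sub-Gaussian tail bound plus a union bound give the $\keypar\sqrt{4\sigma^2 \log\mynumregspind/(\Cmin\numobs)}$ contribution and produce the factor $2\exp(-(\keypar^2-1)\log(\numreg\spindex))$ in \phiprob. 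The second term is bounded in $\ell_\infty$ by $\Dmax\relaxn$ via the operator-norm control~\eqref{EqnDmax} together with $\|\dwit{\pind}_\Joint\|_\infty \le 1$; summing the two reproduces $\Bou$.

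The crux---and the step I expect to be hardest---is verifying strict dual feasibility on $\Jointcom$ with the stated probability. Substituting $\yobs{\pind} - \Xmat{\pind}\best{\pind} = \wnoise{\pind} - \Xmatf{\pind}{\Joint}(\best{\pind}_\Joint - \bstar{\pind}_\Joint)$ into the definition of $\dwitf{\pind}{\ell}$ splits each dual variable into a deterministic piece, driven by $\best{\pind}_\Joint - \bstar{\pind}_\Joint$ and the cross-terms $\binprod{\Xmatf{\pind}{\ell}}{\Xmatf{\pind}{\Joint}(\inprod{\Xmatf{\pind}{\Joint}}{\Xmatf{\pind}{\Joint}})^{-1}}$, and a pure-noise piece. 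The incoherence quantity~\eqref{EqnMutIncoDet} is engineered precisely so that the deterministic piece contributes at most $1 - \incopar$ to the row-wise $\ell_1$ norm $\|\Dwit_\ell\|_1$, uniformly over $\ell \in \Jointcom$. It then remains to show that the noise piece has row-wise $\ell_1$ norm strictly below $\incopar$ for every $\ell \in \Jointcom$. The difficulty is that this demands a tail bound on a \emph{sum of $\numreg$ correlated Gaussian coordinates} (the dependence entering through both $\Xmat{\pind}$ and $\wnoise{\pind}$), held uniformly over the $|\Jointcom| \le \pdim$ rows; a union bound over these rows, combined with sub-Gaussian concentration and the lower bound $\relaxn^2 \ge \frac{4\keypar\sigma^2}{\incopar^2}\frac{\numreg^2 + \numreg\log\pdim}{\numobs}$, is what yields the exponent $(\keypar-1)[\numreg + \log\pdim]$ and hence the remaining term of \phiprob. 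Finally, part~(b)'s recovery conclusion is immediate from the $\ell_\infty$ bound: once $\mybetamin \ge \Bou$, no nonzero row of $\Bstar$ can collapse to zero in $\Best$, so $\bigcup_{\pind=1}^\numreg \Sset(\best{\pind}) = \Joint$.
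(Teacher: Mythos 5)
Your proposal follows essentially the same route as the paper: the identical primal--dual witness construction (restricted program on $\Joint$, duals on $\Jointcom$ defined by stationarity, strict feasibility certified via the incoherence bound $1-\incopar$ plus Gaussian concentration on the projected-noise piece, and the two-term decomposition of $\Best_\Joint - \Bstar_\Joint$ into a noise maximum and a $\Dmax\relaxn$ dual term). The only detail left implicit---that the row-wise $\ell_1$ sum of $\numreg$ noise coordinates is handled by maximizing over the $2^\numreg$ sign vectors, which is what produces the $\numreg$ in the exponent $(\keypar-1)[\numreg+\log\pdim]$---is exactly how the paper executes the step you flag as hardest, so the argument is sound.
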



We now state an analogous result for random design matrices; in
particular, consider the observation model~\eqref{EqnMeasurement} with
design matrices $\Xmat{\pind}$ chosen with i.i.d. rows from covariance
matrices $\Covmati{\pind}$.  In analogy to definitions~\eqref{EqnCmin}
and~\eqref{EqnDmax} in the deterministic case, we define the lower
bound
\begin{eqnarray}
\label{EqnCminRand}
\Cmin(\Covmati{}) & \leq & \min_{\pind = 1, \ldots, \numreg}
\lammin \big( \Covmatif{\pind}{\Joint \Joint} \big),
\end{eqnarray}
as well as an analogous upper bound on $\ell_\infty$-operator norm of
the inverses
\begin{equation}
\label{EqnDmaxRand}
\Dmax(\Covmati{}) \; \geq \; \max_{\pind = 1, \ldots, \numreg}
\matsnorm{ (\Covmatif{\pind}{\Joint \Joint})^{-1}}{\infty} \leq \Dmax.
\end{equation}
Note that unlike the case of deterministic designs, these quantities
are \emph{not} functions of the design matrix $\Xmat{}$, which is now
a random variable.  Finally, our results involve an analogous
incoherence parameter of the covariance matrices $\Covmati{} = \{
\Covmati{\pind}, \pind = 1, \ldots, \numreg \}$, defined as
\begin{eqnarray}
\label{EqnMutIncoRand}
\incopar(\Covmati{}) & \defn & 1- \max_{\jind = 1, \ldots,
  |\Jointcom|} \sum_{\pind=1}^\numreg \big\| \Covmatif{\pind}{\jind \,
  \Joint} \; (\Covmatif{\pind}{\Joint \Joint})^{-1} \big \|_1 \; > \;
0.
\end{eqnarray}

With this notation, the following result provides an analog of
Theorem~\ref{ThmDetDesign} for random design matrices:
\begin{theorem}[Sufficient conditions for random Gaussian designs]
\label{ThmGauss}
Suppose that we are given $\numobs$ i.i.d. observations from the
model~\eqref{EqnMeasurement} with
\begin{eqnarray}
\label{EqnGaussSampSize}
\numobs & > & \frac{8 \, \tautwo \; \numreg }{\Cmin \incopar^2} \,
\spindex \big(\numreg + \log \pdim \big)
\end{eqnarray}
for some $\tautwo > 1$.  If we solve the convex
program~\eqref{EqnBlockReg} with regularization parameter satisfying
\mbox{$\relaxn \geq \frac{4 \keypar \sigma^2}{\incopar^2}
\big[\frac{\numreg^2 + \numreg \log (\pdim)}{\numobs}]$} for some
$\keypar > 1$, then with probability greater than
\begin{equation}
\phiprobtwo \defn 1- 2 \exp \big \{ -2 (\keypar^2 -1)\log (\numreg
\spindex) \big \} - 2 \exp \big \{ -\tautwo ( \numreg + \log \pdim)
\big \} \rightarrow 1,
\end{equation}
we are guaranteed that
\begin{enumerate}
\item[(a)] The block-regularized program~\eqref{EqnBlockReg} has a
unique solution $\Best$ such that $\bigcup_{\pind=1}^\numreg
\Sset(\best{\pind}) \subseteq \Joint$.
\item[(b)] The solution satisfies the elementwise $\ell_\infty$ bound
\begin{eqnarray}
\label{EqnEllinfGauss}
\bnorm{\Best-\Bstar}{\infty}{\infty} & \leq & \underbrace{\keypar\;
\sqrt{\frac{100 \sigma^2}{\Cmin} \, \frac{\log
\mynumregspind}{\numobs}} + \relaxn \, \big[\frac{4
\spindex}{\sqrt{\numobs}} + \Dmax \big],} \\
& & \qquad \qquad \qquad \Boutwo \nonumber
\end{eqnarray}
Consequently, if $\betamin \geq \Boutwo$, then
$\bigcup_{\pind=1}^\numreg \Sset(\best{\pind}) = \Joint$, so that the
solution $\Best$ correctly specifies the union of supports $\Joint$.
\end{enumerate}
\end{theorem}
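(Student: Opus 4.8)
The plan is to establish this result via the \emph{primal-dual witness} (PDW) construction, following the same route as the deterministic Theorem~\ref{ThmDetDesign} but adding the arguments needed to handle the randomness of the design. The construction proceeds as follows. First I would set $\best{\pind}_{\Jointcom} = 0$ for every $\pind$, forcing the candidate to have the correct row support, and then solve the convex program restricted to matrices supported on $\Joint$ to obtain $\best{\pind}_{\Joint}$ together with an associated dual matrix $\Dwit$ in the subdifferential of the block norm. The stationarity (KKT) conditions read, for each $\pind$,
\begin{equation*}
\frac{1}{\numobs}\inprod{\Xmat{\pind}}{\Xmat{\pind}(\best{\pind}-\bstar{\pind})} - \frac{1}{\numobs}\inprod{\Xmat{\pind}}{\wnoise{\pind}} + \relaxn\, \dwit{\pind} = 0 ,
\end{equation*}
which I would use on the rows of $\Jointcom$ to \emph{define} the remaining dual coordinates $\dwit{\pind}_{\Jointcom}$. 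The construction succeeds --- certifying that $\Best$ is the \emph{unique} optimum with row support inside $\Joint$, and hence proving part (a) --- exactly when the strict dual feasibility condition $\|\Dwit_{\jind}\|_1 < 1$ holds for every row $\jind \in \Jointcom$, where $\Dwit_{\jind}$ is the $\numreg$-vector forming the $\jind$-th row. The elementwise bound of part (b) then follows from the restricted stationarity equation, which gives $\best{\pind}_{\Joint}-\bstar{\pind}_{\Joint} = (\frac{1}{\numobs}\inprod{\Xmatf{\pind}{\Joint}}{\Xmatf{\pind}{\Joint}})^{-1}\big(\frac{1}{\numobs}\inprod{\Xmatf{\pind}{\Joint}}{\wnoise{\pind}} - \relaxn\,\dwit{\pind}_{\Joint}\big)$, split into a noise (variance) term and a regularization (bias) term.

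The ingredient new to the random setting is that the sample-level quantities governing the deterministic bound must be related to their population analogs $\Cmin(\Covmati{})$, $\Dmax(\Covmati{})$, $\incopar(\Covmati{})$. I would first verify that, with high probability, the realized design satisfies the column bound~\eqref{EqnColMax}, using $\chi^2$-concentration for the Gaussian column norms. Next, using tail bounds for the extreme eigenvalues of Wishart matrices, I would show that $\lammin(\frac{1}{\numobs}\inprod{\Xmatf{\pind}{\Joint}}{\Xmatf{\pind}{\Joint}})$ concentrates around $\lammin(\Covmatif{\pind}{\Joint\Joint}) \geq \Cmin$ and that the $\ell_\infty$-operator norm of its inverse stays near $\Dmax$; the sample-size requirement~\eqref{EqnGaussSampSize}, which forces $\numobs = \Omega(\spindex(\numreg+\log\pdim))$, is precisely what makes these $|\Joint| \times |\Joint|$ submatrices well conditioned with high probability.

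The heart of the argument is transferring the incoherence condition to the sample and proving strict dual feasibility. Here I would condition on the support columns $\{\Xmatf{\pind}{\Joint}\}$ and exploit Gaussianity: each off-support column $\Xmatf{\pind}{\jind}$, $\jind \in \Jointcom$, decomposes into its conditional mean $\Xmatf{\pind}{\Joint}(\Covmatif{\pind}{\Joint\Joint})^{-1}\Covmatif{\pind}{\Joint\,\jind}$ --- a linear function of the support columns with coefficients equal to the population regression vector --- plus an independent Gaussian fluctuation. The conditional-mean piece contributes the population incoherence sum $\sum_{\pind}\|\Covmatif{\pind}{\jind\,\Joint}(\Covmatif{\pind}{\Joint\Joint})^{-1}\|_1 \leq 1-\incopar$ of~\eqref{EqnMutIncoRand} (recalling that each dual coordinate $\dwit{\pind}_{\Joint}$ has sup-norm at most one), while the fluctuation piece is a mean-zero Gaussian quantity that I would bound by $\frac{\incopar}{2}$ uniformly over the at most $\pdim$ rows of $\Jointcom$ by a union bound --- the source of the $\log\pdim$ and $\numreg$ dependence in $\phiprobtwo$. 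Together with the stated choice of $\relaxn$, which dominates the noise contribution to $\Dwit_{\Jointcom}$, this yields $\|\Dwit_{\jind}\|_1 \leq 1-\frac{\incopar}{2} < 1$. The additional term $\frac{4\spindex}{\sqrt{\numobs}}$ in $\Boutwo$, absent from the deterministic bound, arises exactly from this sample-versus-population gap --- concretely from controlling the deviation $\|(\frac{1}{\numobs}\inprod{\Xmatf{\pind}{\Joint}}{\Xmatf{\pind}{\Joint}})^{-1} - (\Covmatif{\pind}{\Joint\Joint})^{-1}\|$ incurred when the bias term is expressed through the population quantity $\Dmax$.

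The main obstacle I anticipate is the strict dual feasibility step for the block $\ell_1/\ell_\infty$ norm. Unlike the ordinary Lasso, where each dual coordinate is a scalar and one controls a maximum of $|\Jointcom|$ scalar Gaussians, here each row $\Dwit_{\jind}$ is an $\numreg$-vector and one must control its $\ell_1$ norm uniformly over $\Jointcom$. Because the $\numreg$ regression problems are coupled through the shared row structure, these fluctuation terms are sums of $\numreg$ dependent Gaussian quantities, and obtaining joint concentration that is sharp enough to preserve the incoherence slack $\incopar$ while tracking the union bound over $\pdim$ rows and the explicit dependence on $\numreg$ is the delicate part --- it is what dictates the precise form of the sample-size condition~\eqref{EqnGaussSampSize} and of the failure probability $\phiprobtwo$.
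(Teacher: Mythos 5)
Your proposal follows essentially the same route as the paper's proof: the primal-dual witness construction with strict dual feasibility $\sum_{\pind}|\dwit{\pind}_\jind| < 1$ on $\Jointcom$, the conditional Gaussian decomposition of each off-support column into its population-regression mean (absorbing the $1-\incopar$ incoherence budget) plus an independent fluctuation bounded by $\incopar/2$ via union bounds over sign patterns and rows, and the noise/bias split of $\Best_\Joint - \Bstar_\Joint$ with Wishart eigenvalue concentration producing the extra $4\spindex/\sqrt{\numobs}$ term in $\Boutwo$. You even correctly anticipate the one genuinely delicate point---controlling the $\ell_1$ norm of the $\numreg$-vector dual rows, which the paper handles by maximizing $\sum_\pind \mysign_\pind \Wnew{\pind}{\jind}$ over $\mysign \in \{-1,+1\}^\numreg$, the source of the $\numreg$ term in the exponents of $\phiprobtwo$.
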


\vspace*{.2in}

To clarify the interpretation of Theorems~\ref{ThmDetDesign} and
Theorem~\ref{ThmGauss}, part (a) of each claim guarantees that the
estimator has \emph{no false inclusions}, in that the row support of
the estimate $\Best$ is contained within the row support of the true
matrix $\Bstar$.  One consequence of part (b) is that as long as the
minimum signal parameter $\betamin$ decays slowly enough, then the
estimators have \emph{no false exclusions}, so that the true row
support is correctly recovered.  

In terms of consistency rates in block $\ell_\infty/\ell_\infty$ norm,
assuming that the design-related quantities $\Cmin$, $\Dmax$ and
$\incopar$ do not scale with $\pdim$, Theorem~\ref{ThmDetDesign}(a)
guarantees consistency in elementwise $\ell_\infty$-norm at the rate
\begin{eqnarray*}
\bnorm{\Best-\Bstar}{\infty}{\infty} & = & \order \big ( \sigma^2 \;
\sqrt{\frac{\numreg^2 +\numreg \log \pdim}{\numobs}} \big).
\end{eqnarray*}
Here we have used the fact that $\log |\Joint| \leq \log (\numreg
\spindex) = o(\numreg \log \pdim)$.  Similarly,
Theorem~\ref{ThmGauss}(b) guarantees consistency in elementwise
$\ell_\infty$-norm at the rate
\begin{eqnarray*}
\bnorm{\Best-\Bstar}{\infty}{\infty} & = & \order \big ( \sigma^2 \;
\sqrt{\max\{1, \frac{\spindex}{\numreg \log \pdim} \big \}
\; \frac{\numreg^2 +\numreg \log \pdim}{\numobs}} \big).
\end{eqnarray*}
In this expression, the extra term $\max \{1, \spindex/(\numreg \log
\pdim)\}$ arises in the analysis due to the need to control the norms
of the random design matrices.  For sufficiently sparse problems
(e.g., $\spindex = \order(\log \pdim)$), this factor is constant.

\vspace*{.2in}

At a high level, our results thus far show that for a fixed number
$\numreg$ of regression problems, the $\ell_1/\ell_\infty$ method
guarantees exact support recovery with $\numobs = \Omega(\spindex \log
\pdim)$ samples, and guarantees consistency in an elementwise sense at
rate $\order(\sqrt{\frac{\log \pdim}{\numobs}})$.  In qualitative
terms, these results match the known scaling~\cite{Wainwright06a} for
the Lasso ($\ell_1$-regularized QP), which is obtained as the special
case for univariate regression ($\numreg = 1$).  It should be noted
that this scaling is known to be optimal in an information-theoretic
sense: no algorithm can recover support correctly if the rescaled
sample size $\orlas = \frac{\numobs}{2 \spindex \log (\pdim -
\spindex)}$ is below a critical
threshold~\cite{Wainwright06_info,WanWaiRam08}.

\subsection{A phase transition for standard Gaussian ensembles}

In order to provide keener insight into the advantages and/or
disadvantages associated with using $\ell_1/\ell_\infty$ block
regularization, we need to obtain even sharper results, ones that are
capable of distinguishing constants in front of the rescaled sample
size $\orlas$.  With this aim in mind, the following results are
specialized to the case of $\numreg = 2$ regression problems, where
the corresponding design matrices $\Xmat{\pind}, \pind =1,2$ are
sampled from the standard Gaussian ensemble---i.e., with i.i.d. rows
$N(0, I_{\pdim \times \pdim})$.  By studying this simpler class of
problems, we can make \emph{quantitative comparisons} to the sample
complexity of the Lasso, which provide insight into the benefits and
dangers of block $\ell_1/\ell_\infty$ regularization.

The main result of this section asserts that there is a phase
transition in the performance of $\ell_1/\ell_\infty$ quadratic
programming for suppport recovery---by which we mean a sharp
transition from failure to success---and provide the exact location of
this transition point as a function of $(\numobs, \pdim, \spindex)$
and the overlap parameter $\overlap \in (0,1)$.  The phase transition
involves the \emph{support gap}
\begin{eqnarray}
\label{EqnDefnBetagap}
\mybetagap & = & \max_{i \in \Sset(\bstar{1}) \cap \Sset(\bstar{2})}
\big | \, |\bstar{1}_i| - |\bstar{2}_i| \, \big|.
\end{eqnarray}
This quantity measures how close the two regression vectors are in
absolute value on their \emph{shared support}.  Our main theorem
treats the case in which this gap vanishes (i.e., \mbox{$\mybetagap =
o(1)$}); note that block $\ell_1/\ell_\infty$ regularization is
best-suited to this type of structure.  A subsequent corollary
provides more general but technical conditions for the cases of
non-vanishing support gaps.  Our main result specifies a phase
transition in terms of the \emph{rescaled sample size}
\begin{eqnarray}
\label{EqnNewOrpar}
\orpar(\numobs, \pdim, \spindex, \overlap) & \defn & \frac{\numobs}{(4
 - 3 \overlap) \spindex \log(\pdim-(2-\overlap) \spindex)},
\end{eqnarray}
as stated in the theorem below.  

\begin{theorem}[Phase transition]
\label{ThmPhase}
Consider sequences of problems, indexed by $(\numobs, \pdim, \spindex,
\overlap)$ drawn from the observation model~\eqref{EqnMeasurement}
with random design $X$ drawn with i.i.d. standard Gaussian entries and 
with $\Cmin = 1 = \Dmax$.
\begin{enumerate}
\item[(a)] {\underline{Success:}} Suppose that the problem sequence
$(\numobs, \pdim, \spindex, \overlap)$ satisfies
\begin{eqnarray}
\orpar(\numobs, \pdim, \spindex, \overlap) & > & 1 + \delpar\qquad
\mbox{for some $\delpar > 0$.}
\end{eqnarray}
If we solve the block-regularized program~\eqref{EqnBlockReg} with
$\relaxn \geq \sqrt{\frac{\keypar \sigma^2 \log \pdim}{\numobs}}$ for
some $\keypar > 2$ and $\mybetagap = o(\relaxn)$, then with
probability greater than \mbox{$1- c_1\exp(-c_2 \log (\pdim -
(2-\overlap) \spindex))$,} the block
$\ell_{1,\infty}$-program~\eqref{EqnBlockReg} has a unique solution
$\Best$ such that $\Sset(\Best) \subseteq \Joint$, and moreover it
satisfies the elementwise bound~\eqref{EqnEllinfGauss} with $\Cmin = 1
= \Dmax$. In addition, if $\betamin > \Boutwo$, then the unique
solution recovers the correct signed support.

\item[(b)] {\underline{Failure:}} For problem sequences $(\numobs,
\pdim, \spindex, \overlap)$ such that
\begin{eqnarray}
\orpar(\numobs, \pdim, \spindex, \overlap) & < & 1 - \delpar \qquad
\mbox{for some $\delpar > 0$}
\end{eqnarray}
and for any non-increasing regularization sequence $\relaxn > 0$, no
solution $\Best = (\best{1}, \best{2})$ to the block-regularized
program~\eqref{EqnBlockReg} has the correct signed support.
%
\end{enumerate}
\end{theorem}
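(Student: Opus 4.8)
The plan is to use the \emph{primal-dual witness} (PDW) method, adapting the argument used for the Lasso to the block $\ell_1/\ell_\infty$ setting. The starting point is the KKT characterization of optimality for~\eqref{EqnBlockReg}: a matrix $\Best$ is optimal if and only if there is a $\Dwit = (\dwit{1}, \dwit{2})$ in the subdifferential of the block norm $\bnormq{\cdot}{\infty}$ at $\Best$ with
\begin{equation*}
\frac{1}{\numobs}\Xmatt{\pind}\big(\Xmat{\pind}\best{\pind} - \yobs{\pind}\big) + \relaxn\,\dwit{\pind} = 0, \qquad \pind = 1,2.
\end{equation*}
The essential new feature is the subdifferential: for each row $\jind$ the pair $(\dwitf{1}{\jind}, \dwitf{2}{\jind})$ must lie in $\partial\vecnorm{(\best{1}_\jind,\best{2}_\jind)}{\infty}$, i.e.\ it satisfies $\|(\dwitf{1}{\jind},\dwitf{2}{\jind})\|_1 \le 1$, with equality and support on the arg-max coordinates (matching signs) whenever the row is nonzero. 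On the individual supports the pair is pinned to $(\pm 1, 0)$ or $(0, \pm 1)$, whereas on the \emph{shared} support a tie $|\best{1}_\jind| = |\best{2}_\jind|$ frees it to range over $(t\,\sign(\bstar{1}_\jind),(1-t)\,\sign(\bstar{2}_\jind))$, $t\in[0,1]$. This extra freedom is exactly what lowers the threshold, and controlling it is the heart of the argument.

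For part~(a) I would construct the witness by solving the oracle restricted program over matrices supported on $\Joint$, setting all other rows to zero, and choosing the on-support subgradient $\dwitf{\pind}{\Joint}$ to take the \emph{even split} $t_\jind = \tfrac12$ on every shared row. The hypothesis $\mybetagap = o(\relaxn)$ enters precisely here: it forces the restricted solution to satisfy $|\best{1}_\jind| = |\best{2}_\jind|$ (up to a negligible perturbation) on the shared support, so the even split is a genuine subgradient, and together with $\betamin > \Boutwo$ and the $\ell_\infty$ bound~\eqref{EqnEllinfGauss} it guarantees the correct signs. It then remains to verify \emph{strict dual feasibility}: $\|(\dwitf{1}{\ell},\dwitf{2}{\ell})\|_1 < 1$ for all $\ell \in \Jointcom$. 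Substituting $\yobs{\pind} = \Xmatf{\pind}{\Joint}\bstar{\pind}_\Joint + \wnoise{\pind}$ and eliminating $\best{\pind}_\Joint - \bstar{\pind}_\Joint$ via the on-support stationarity equations gives the decomposition
\begin{equation*}
\dwitf{\pind}{\ell} = \underbrace{\Xmatft{\pind}{\ell}\,\Xmatf{\pind}{\Joint}\big(\inprod{\Xmatf{\pind}{\Joint}}{\Xmatf{\pind}{\Joint}}\big)^{-1}\dwitf{\pind}{\Joint}}_{\text{subgradient term}} \;+\; \underbrace{\frac{1}{\relaxn\numobs}\Xmatft{\pind}{\ell}\,\Pi^\perp_\pind\wnoise{\pind}}_{\text{noise term}},
\end{equation*}
where $\Pi^\perp_\pind$ projects onto the orthogonal complement of the range of $\Xmatf{\pind}{\Joint}$.

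Conditioned on the support data $(\Xmatf{\pind}{\Joint},\wnoise{\pind})$, each off-support column $\Xmatf{\pind}{\ell}$ is an independent standard Gaussian vector, so $\dwitf{\pind}{\ell}$ is zero-mean Gaussian and—crucially—the pairs $\{(\dwitf{1}{\ell},\dwitf{2}{\ell})\}_{\ell\in\Jointcom}$ are mutually independent across $\ell$ (and across $\pind$). Using $\tfrac{1}{\numobs}\inprod{\Xmatf{\pind}{\Joint}}{\Xmatf{\pind}{\Joint}}\approx I$ and $\|\Pi^\perp_\pind\wnoise{\pind}\|_2^2 \approx \numobs\sigma^2$, the conditional variance is $\tau_\pind^2 := \var(\dwitf{\pind}{\ell}) \approx \vecnorm{\dwitf{\pind}{\Joint}}{2}^2/\numobs + \sigma^2/(\relaxn^2\numobs)$. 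A Laplace computation for the $\ell_1$-norm of an independent bivariate Gaussian gives $\Prob(|\dwitf{1}{\ell}| + |\dwitf{2}{\ell}| > 1) \approx \exp\big(-\tfrac{1}{2(\tau_1^2+\tau_2^2)}\big)$, the rate being set by the \emph{sum} of variances since the optimal allocation $z_\pind \propto \tau_\pind^2$ concentrates the cost. Maximizing over the $|\Jointcom| = \pdim-(2-\overlap)\spindex$ independent terms, strict dual feasibility holds w.h.p.\ exactly when $\tau_1^2+\tau_2^2 < \tfrac{1}{2\log(\pdim-(2-\overlap)\spindex)}$. Neglecting the noise term, $\tau_1^2+\tau_2^2 \approx \vecnorm{\Dwit_\Joint}{F}^2/\numobs$, and the even split minimizes the Frobenius norm: the $2(1-\overlap)\spindex$ individual rows each contribute $1$ and the $\overlap\spindex$ shared rows each contribute $2(\tfrac12)^2 = \tfrac12$, so $\vecnorm{\Dwit_\Joint}{F}^2 = (4-3\overlap)\spindex/2$. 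This reproduces $\numobs > (4-3\overlap)\spindex\log(\pdim-(2-\overlap)\spindex)$, i.e.\ $\orpar > 1$; the role of $\keypar > 2$ and the lower bound on $\relaxn$ is to keep the noise term a controlled fraction of the subgradient term so the slack $\delpar$ absorbs it.

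For part~(b) the argument is a converse. If some $\Best$ had the correct signed support, it would be supported on $\Joint$ with correct signs, the KKT conditions would hold with some on-support subgradient, and the off-support decomposition above would apply. The key observation is that \emph{every} subgradient consistent with the correct signed support obeys $\vecnorm{\Dwit_\Joint}{F}^2 \ge (4-3\overlap)\spindex/2$, with equality only at the even split; hence $\tau_1^2+\tau_2^2 \ge (4-3\overlap)\spindex/(2\numobs)$ irrespective of the solution and of $\relaxn$ (the noise term only increases the variance). When $\orpar < 1-\delpar$ this forces $\tau_1^2+\tau_2^2 > \tfrac{1}{2(1-\delpar)\log(\pdim-(2-\overlap)\spindex)}$, and the lower-tail analysis of the maximum of $|\Jointcom|$ independent copies yields $\max_{\ell}(|\dwitf{1}{\ell}| + |\dwitf{2}{\ell}|) > 1$ with probability tending to one, contradicting dual feasibility and hence optimality. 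I expect the main obstacle to be making the bivariate Gaussian maximum sharp in both directions at once: pinning down the constant in $\Prob(|\dwitf{1}{\ell}| + |\dwitf{2}{\ell}| > 1)$ to leading exponential order, controlling the fluctuations of $\tfrac{1}{\numobs}\inprod{\Xmatf{\pind}{\Joint}}{\Xmatf{\pind}{\Joint}}$ and $\|\Pi^\perp_\pind\wnoise{\pind}\|_2^2$ about their means (the source of the $o(1)$ corrections absorbed by $\delpar$), and, in the failure direction, making the conclusion uniform over all admissible on-support subgradients and all non-increasing sequences $\relaxn$. Establishing that the subdifferential geometry forces the Frobenius-norm lower bound $(4-3\overlap)\spindex/2$—and that no alternative sign/arg-max pattern can evade it—is the conceptual crux distinguishing the block-norm analysis from the Lasso.
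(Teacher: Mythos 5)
Your proposal follows essentially the same route as the paper: the primal--dual witness construction (with the converse direction resting on exactly the equivalence the paper isolates as Lemma~\ref{LemKey}(ii)), the same decomposition of each off-support dual entry into a subgradient term plus a noise term, conditioning on the support data so that the off-support pairs become independent bivariate Gaussians, the sign-pattern/sum-of-variances calculation giving the rate $\exp\{-1/(2(\tau_1^2+\tau_2^2))\}$, and the identification of $\vecnorm{\Dwit_\Joint}{F}^2$, equal to $(4-3\overlap)\spindex/2$ at the even split, as the quantity that sets the threshold --- all of which matches the paper's proof and its Lemma~\ref{LemVarBound}. Two points of comparison are worth recording. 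First, in the failure direction you replace the paper's concentration statement (Lemma~\ref{LemVarBound}, which shows $\|\duala_\Joint\|_2^2+\|\dualb_\Joint\|_2^2$ concentrates around $\tfrac{\spindex}{2}\{(4-3\overlap)+\vecnorm{\bdiff}{2}^2/(\relaxn^2\spindex)\}$) by the deterministic inequality $t^2+(1-t)^2\ge \tfrac12$, valid for \emph{every} subgradient consistent with the correct signed support; this is sound, is arguably cleaner (it renders the paper's case split over $\relaxn^2\numobs$ unnecessary for part (b), since the subgradient part of the variance alone suffices for any non-increasing $\relaxn$), but it forgoes the gap term that the paper's two-sided lemma supplies and which is what drives Corollary~\ref{gapcor}; note also that the upper-concentration half of Lemma~\ref{LemVarBound} is still needed in your success direction, where a deterministic bound goes the wrong way. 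Second, in part (a) you cannot literally \emph{choose} the even split: once the restricted program is solved, $\Dwit_\Joint$ is pinned down by stationarity (given invertibility of the Gram matrices). The paper instead imposes the tie $|\btil{1}_\Both|=|\btil{2}_\Both|$ in the primal candidate, derives the split in closed form (Lemma~\ref{lemmaboth}), and then must verify \emph{entrywise} validity --- that every component of the split lies in $[0,1]$, i.e.\ $\ell_\infty$-control of its deviation from $\tfrac12\vec{1}$, not merely spectral or Frobenius control --- which is where the bulk of the technical work lives (Lemma~\ref{LemBound} together with the rotation-invariance concentration Lemmas~\ref{biglemma1} and~\ref{minilemma1}), and where $\mybetagap = o(\relaxn)$ enters through the term $\bdiff/(2\relaxn)$. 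You correctly flag this verification as the crux and identify the right hypothesis, but the $\ell_\infty$ nature of the required control is the one step your plan leaves genuinely underdeveloped; everything else is the paper's argument in outline.
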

In a nutshell, Theorem~\ref{ThmPhase} states that block
$\ell_1/\ell_\infty$ regularization recovers the correct support with
high probability for sequences $(\numobs, \pdim, \spindex, \overlap)$
such that $\orpar(\numobs, \pdim, \spindex, \overlap) > 1$, and
otherwise fails with high probability.

\newcommand{\Trun}{\ensuremath{T_{\regpar}}}
\newcommand{\Gaplim}{\ensuremath{\Delta}}
\newcommand{\myvmax}{\ensuremath{v^*}}
We now consider the case in which the support gap does not vanish, and
show that it only further degrades the performance of block
$\ell_1/\ell_\infty$ regularization.  To make the degree of this
degradation precise, we define the $\regpar$-truncated gap vector
$\Trun(\Best) \in \real^\pdim$, with elements
\begin{eqnarray*}
[\Trun(\Bstar)]_i & \defn & 
\begin{cases} \min \big \{ \regpar, \; \big | \, |\bstar{1}_i| -
  |\bstar{2}_i| \, \big| \big \} & \mbox{if $i \in \Sset(\bstar{1})
   \cap \Sset(\bstar{2})$} \\
0 & \mbox{otherwise}
\end{cases}
\end{eqnarray*}
Recall that support overlap $\Sset(\bstar{1}) \cap \Sset(\bstar{2})$
has cardinality $\overlap \spindex$ by assumption.  Therefore,
$\Trun(\Bstar)$ has at most $\overlap \spindex$ non-zero entries, and
moreover $\|\Trun(\Bstar)\|_2^2 \leq \regpar^2 \overlap \spindex$.  We
then define the rescaled gap limit
\begin{eqnarray}
\label{EqnRescaledGap}
\Gaplim(\Bstar, \relaxn) & \defn & \lim \sup_{(\numobs, \pdim,
  \spindex)} \frac{\|\Trun(\Bstar)\|^2_2}{\relaxn^2 \spindex}.
\end{eqnarray}
Note that $\Gaplim(\Bstar, \relaxn) \in [0,\overlap]$ by construction.
With these definitions, we have the following:
\begin{corollary}[Poorer performance with non-vanishing gap]
\label{gapcor}
If for any $\delpar > 0$, the sample size $\numobs$ is upper bounded
as
\begin{eqnarray}
\label{EqnGapCor}
  \numobs & < & (1-\delpar) \; \big [ (4 - 3 \overlap) +
\Gaplim(\Bstar, \relaxn) \big ] \spindex \log [\pdim-(2-\overlap)
\spindex],
\end{eqnarray}
then the dual recovery method~\eqref{EqnSignDua} fails to recover the
individual signed supports.
\end{corollary}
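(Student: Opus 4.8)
The plan is to treat Corollary~\ref{gapcor} as a refinement of the failure direction, Theorem~\ref{ThmPhase}(b). Recall that that analysis reduces success of the dual recovery method~\eqref{EqnSignDua} to a dual-feasibility check: recovery requires that for every index $\ell \notin \Joint$ the induced dual component $Z_\ell \in \real^2$ lie in the dual ball $\{z : |z_1| + |z_2| \le 1\}$, and failure follows once $\max_{\ell \notin \Joint}(|Z^1_\ell| + |Z^2_\ell|)$ exceeds $1$. Under the standard Gaussian design with $\Cmin = \Dmax = 1$, this maximum is, to leading order, the maximum of $\pdim - (2-\overlap)\spindex$ Gaussians whose common scale is governed by $\vecnorm{\estim{Z}_\Joint}{F}$, the Frobenius norm of the subgradient/dual certificate restricted to $\Joint$; its fluctuation is of order $\sqrt{2\,\vecnorm{\estim{Z}_\Joint}{F}^2\,\log(\pdim-(2-\overlap)\spindex)/\numobs}$. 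The entire content of the corollary is therefore the recomputation of $\vecnorm{\estim{Z}_\Joint}{F}^2$ when the support gap does not vanish.

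First I would split $\Joint$ into the $2(1-\overlap)\spindex$ rows lying in only one support and the $\overlap\spindex$ rows in $\Sset(\bstar{1})\cap\Sset(\bstar{2})$. Each single-support row forces the certificate row to a corner $(\pm 1,0)$ or $(0,\pm 1)$, contributing exactly $1$. For an overlap row $i$, write the certificate row as $(\mu_1,\mu_2)$ with $\mu_1+\mu_2=1$ and $\mu_j\ge 0$; its squared norm is $\mu_1^2+\mu_2^2 = \tfrac12(1 + (\mu_1-\mu_2)^2)$. The key step is that the split asymmetry $\mu_1-\mu_2$ is \emph{pinned}, not free: the on-support stationarity condition $\estim{z}^k_\Joint = -\tfrac{1}{\numobs\regpar}(\Xmatf{k}{\Joint})^T(\Xmatf{k}{\Joint}\best{k}_\Joint - \yobs{k})$ determines the certificate from the restricted solution, and the $\ell_\infty$ penalty equalizes the two magnitudes $|\best{1}_i|,|\best{2}_i|$, so the two coefficient deviations on row $i$ differ by exactly the true gap $g_i \defn \big|\,|\bstar{1}_i| - |\bstar{2}_i|\,\big|$. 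Passing this difference through the $1/\regpar$ scaling of the stationarity map (design factors equal to $1\pm o(1)$ since $\Cmin=\Dmax=1$) gives $\mu_1-\mu_2 = \min\{1, g_i/\regpar\}(1+o(1))$, the truncation at $g_i=\regpar$ being produced by $\mu_2\ge 0$ (beyond this the solution can no longer equalize and the certificate sits at the corner). Hence overlap row $i$ contributes $\tfrac12(1 + (\min\{1,g_i/\regpar\})^2)$.

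Summing the two pieces gives
\begin{eqnarray*}
\vecnorm{\estim{Z}_\Joint}{F}^2 & = & 2(1-\overlap)\spindex + \frac{\overlap\spindex}{2} + \frac{1}{2\regpar^2}\sum_{i \in \Sset(\bstar{1})\cap\Sset(\bstar{2})} \big(\min\{\regpar, g_i\}\big)^2 \\
& = & \frac{(4-3\overlap)\spindex}{2} + \frac{\vecnorm{\Trun(\Bstar)}{2}^2}{2\regpar^2},
\end{eqnarray*}
since the final sum is $\vecnorm{\Trun(\Bstar)}{2}^2$ by the definition of the $\regpar$-truncated gap vector. Substituting into the failure criterion of Theorem~\ref{ThmPhase}(b)---where the leading term $(4-3\overlap)\spindex/2$ produced the sample-size constant $(4-3\overlap)$ through the factor-two relation between the squared Frobenius norm and the threshold---the effective constant becomes $(4-3\overlap) + \vecnorm{\Trun(\Bstar)}{2}^2/(\regpar^2\spindex)$, whose $\limsup$ is exactly $(4-3\overlap)+\Gaplim(\Bstar,\regpar)$. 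Thus whenever $\numobs < (1-\delpar)\big[(4-3\overlap)+\Gaplim(\Bstar,\regpar)\big]\spindex\log[\pdim-(2-\overlap)\spindex]$, the off-support maximum exceeds the feasibility radius with probability tending to one, so no optimal $\Best$ admits a dual certificate consistent with the correct signed support and the method~\eqref{EqnSignDua} fails.

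The main obstacle is the pinning step: rigorously establishing, uniformly over the $\overlap\spindex$ overlap rows, that the restricted solution equalizes the two magnitudes up to a residual gap $\max\{0, g_i-\regpar\}$ and that the induced certificate asymmetry concentrates at $\min\{1, g_i/\regpar\}$. This requires simultaneous control of the restricted solution $\best{k}_\Joint$ and its residuals on all overlap rows, together with an argument that the design-dependent fluctuations---of order $\sqrt{\spindex/\numobs}$---are negligible against the leading term. I expect this to reuse the concentration estimates already developed for Theorem~\ref{ThmPhase}, now tracked at the finer per-row resolution needed to see the truncated gap; the remaining steps (the row count and the summation) are bookkeeping.
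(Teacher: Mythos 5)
Your proposal is correct and follows essentially the same route as the paper: there the corollary is obtained by re-running the failure argument of Theorem~\ref{ThmPhase}(b) with the gap term retained, where the exact stationarity identity of Lemma~\ref{lemmaboth} gives $\Signmat^1 \dwitf{1}{\Both} \approx \frac{1}{2}\vecone - \frac{1}{2\relaxn}\bdiff$ (your pinned asymmetry $\min\{1, g_i/\relaxn\}$, truncation included), Lemma~\ref{LemVarBound} then concentrates $\|\duala_\Joint\|_2^2 + \|\dualb_\Joint\|_2^2$ at $\frac{\spindex}{2}\big[(4-3\overlap) + \Gaplim(\Bstar,\relaxn)\big]$---exactly your per-row Frobenius-norm computation $2(1-\overlap)\spindex + \frac{\overlap\spindex}{2} + \frac{1}{2\relaxn^2}\|\Trun(\Bstar)\|_2^2$---and the Gaussian-maxima lower bound~\eqref{EqnCombine} converts this into dual infeasibility, hence failure, whenever $\numobs$ is below the stated threshold. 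The ``main obstacle'' you flag (uniform pinning of the overlap rows) is handled in the paper precisely as you anticipate: by the closed-form dual expression of Lemma~\ref{lemmaboth} combined with the spectral-norm concentration estimates of Appendix~\ref{AppLemVarBound}.
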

To understand the implications of this result, suppose that all
$\overlap \spindex$ of the gaps $\big | \, |\bstar{1}_i| -
|\bstar{2}_i| \, \big|$ were above the regularization level $\relaxn$.
Then by definition, we have $\Gaplim(\Bstar, \relaxn) = \overlap$, so
that condition~\eqref{EqnGapCor} implies that the method fails for all
$\numobs \, < \, (1-\delpar) \, [4 - 2 \overlap] \spindex
\log[\pdim-(2-\overlap) \spindex]$.  Since the factor $(4-2 \overlap)$
is strictly greater than $2$ for all $\overlap < 1$, this scaling is
\emph{always worse}\footnote{Here we are assuming that $\spindex/\pdim
= o(1)$, so that $\log(\pdim - \spindex) \asymp
\log[\pdim-(2-\overlap) \spindex]$.} than the Lasso scaling given by
$\numobs \asymp 2 \spindex \log(\pdim - \spindex)$ (see
equation~\eqref{EqnLassoThresh}), unless there is perfect overlap
($\overlap = 1$), in which case it yields no improvements.
Consequently, Corollary~\ref{gapcor} shows that the performance
$\ell_1/\ell_\infty$ regularization is also very sensitive to the
numerical amplitudes of the signal vectors.

\subsection{Illustrative simulations and some consequences} 

In this section, we provide some simulation results to illustrate the
phase transition predicted by Theorem~\ref{ThmPhase}.  Interestingly,
these results show that the theory provides an accurate description of
practice even for relatively small problem sizes (e.g., $\pdim =
128$).  As specified in Theorem~\ref{ThmPhase}, we simulate
multivariate regression problems with $\numreg = 2$ columns, with the
design matrices $\Xmat{\pind}$ drawn from the standard Gaussian
ensemble.  In all cases, we initially solved the $\ell_1/\ell_\infty$
program using MATLAB, and then verified that the behavior of the
solution agreed with the primal-dual optimality conditions specified
by our theory.  In subsequent simulations, we solved directly for
the dual variables, and then checked whether or not the dual
feasibility conditions are met.

We first illustrate the difference between unscaled and rescaled plots
of the empirical performance, which demonstrate that the rescaled
sample size $\numobs/[\spindex \log(\pdim - \spindex)]$ specifies the
high-dimensional scaling of block $\ell_1/\ell_\infty$ regularization.
\begin{figure}[h]
  \begin{center}
    \begin{tabular}{ccc}
      \widgraph{0.5\textwidth}{./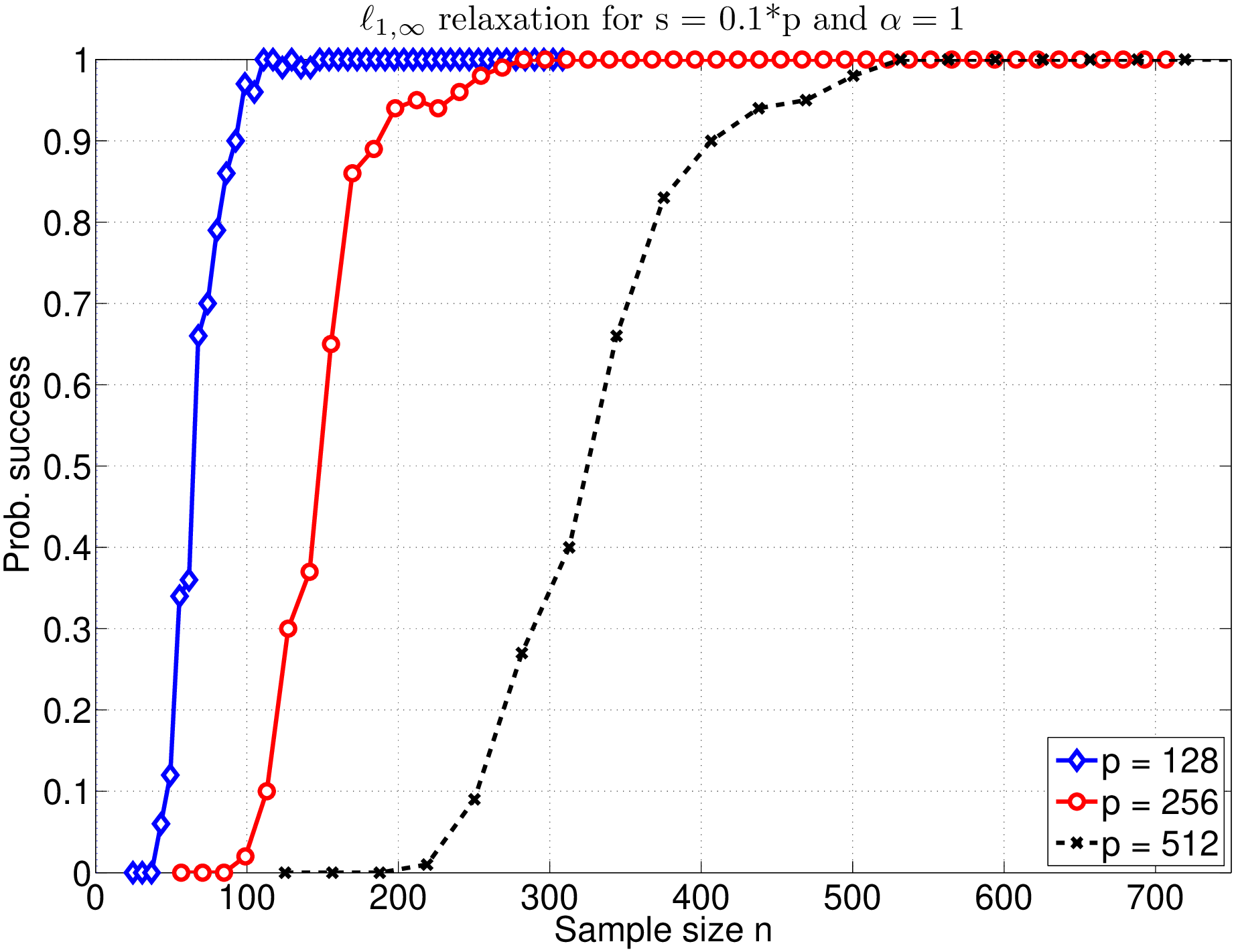} & &
      \widgraph{0.5\textwidth}{./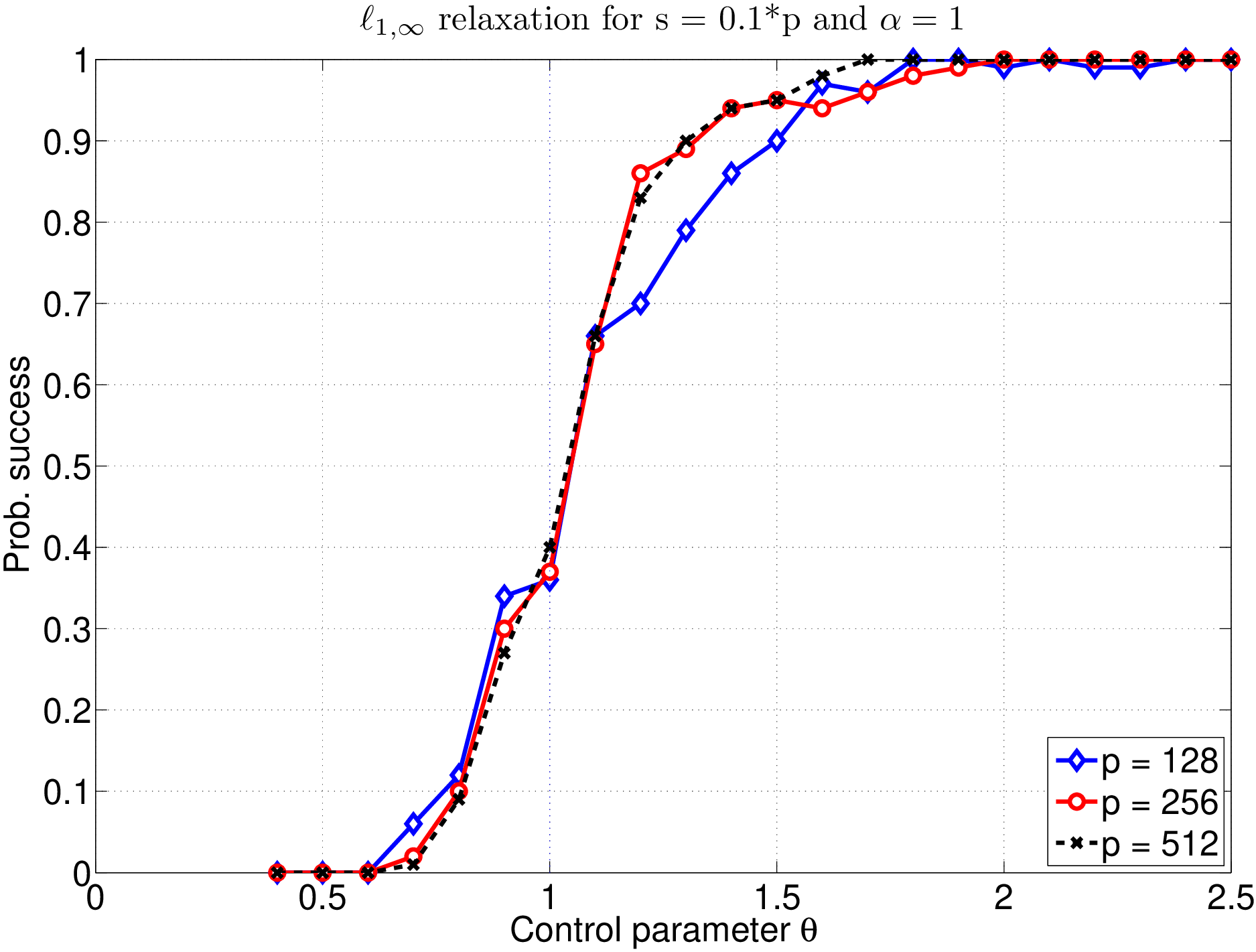} \\
      (a) & & (b)
    \end{tabular}
  \end{center}
  \caption{(a) Plots of the probability $\mprob[\estim{\Joint} =
\Joint]$ of successful joint support recovery versus the sample size
$n$.  Each curve corresponds to a different problem size $\pdim$;
notice how the curves shift to the right as $\pdim$ increases,
reflecting the difficulty of solving larger problems.  (b) Plots of
the same data versus the rescaled sample size $
\numobs/[2
\spindex\log(\pdim - \spindex)]$; note how all three curves now align
with one another, showing that this order parameter is the correct
scaling for assessing the method.}
  \label{FigUnScaledScaled}
\end{figure}
Figure~\ref{FigUnScaledScaled}(a) shows the empirical behavior of the
block $\ell_1/\ell_\infty$ method for joint support recovery.  For
these simulations, we applied the method to $\numreg = 2$ regression
problems with overlap $\overlap = 1$, and to three different problem
sizes $\pdim \in \{128, 256, 512 \}$, in all cases with the sparsity
index $\spindex = \lfloor 0.1 \pdim \rfloor$.  Each curve in panel (a)
shows the probability of correct support recovery
$\mprob[\estim{\Joint} = \Joint]$ versus the raw sample size
$\numobs$.  As would be expected, all the curves initially start at
$\mprob[\estim{\Joint} = \Joint] = 0$, but then transition to $1$ as
$n$ increases, with the transition taking place at larger and larger
sampler sizes as $\pdim$ is increased.  The purpose of the rescaling
is to determine exactly how this transition point depends on the
problem size $\pdim$ and other structural parameters ($\spindex$ and
$\overlap$).  Figure~\ref{FigUnScaledScaled}(b) shows the same
simulation results, now plotted versus the rescaled sample size
$\theta \defn \numobs/[2 \spindex\log(\pdim - \spindex)]$, which is
the appropriate rescaling predicted by our theory.  Notice how all
three curves now lie on top of another, and moreover transition
from failure to success at $\theta \approx 1$, consistent with our
theoretical predictions.

We now seek to explore the dependence of the sample size on the
overlap fraction $\overlap \in [0,1]$ of the two regression vectors.
For this purpose, we plot the probability of successful recovery
versus the rescaled sample size
\begin{eqnarray*}
\orpar(\numobs, \pdim, \spindex, \overlap) = \frac{\numobs}{(4 - 3
\overlap) \spindex \log(\pdim-(2-\overlap) \spindex)}.
\end{eqnarray*}
As shown by Figure~\ref{FigUnScaledScaled}(b), when plotted with this
rescaling, there is any longer size $\pdim$.  Moreover, if we choose
the sparsity index $\spindex$ to grow in a fixed way with $\pdim$
(i.e., $\spindex = f(\pdim)$ for some fixed function $f$), then the
only remaining free variable is the overlap parameter $\overlap$.
Note that the theory predicts that the required sample size should
decrease as $\overlap$ increases towards $1$.

As shown earlier in Section~\ref{SecIntro}, Figure~\ref{FigSims} plots
the probability of successful recovery of the joint supports versus
the rescaled samples size $\orpar(\numobs, \pdim, \spindex,
\overlap)$.  Notice that the plot shows four sets of `stacked''
curves, where each stack corresponds to a different choice of the
overlap parameter, ranging from $\overlap = 1$ (left-most stack), to
$\overlap = 0.1$ (right-most stack).  Each stack contains three
curves, corresponding to the problem sizes $\pdim \in \{128, 256, 512
\}$.  In all cases, we fixed the support size $\spindex = 0.1 \pdim$.
As with Figure~\ref{FigUnScaledScaled}(b), the ``stacking'' behavior
of these curves demonstrates that Theorem~\ref{ThmPhase} isolates the
correct dependence on $\pdim$.  Moreover, their step-like behavior is
consistent with the theoretical prediction of a phase transition.
Notice how the curves shift towards the left as the overlap parameter
$\overlap$ parameter increases towards one, reflecting that the
problems become easier as the amount of shared sparsity increases.  To
assess this shift in a qualitative manner for each choice of overlap
$\overlap \in \{0.1, 0.3. 0.7. 1 \}$, we plot a vertical line within
each group, which is obtained as the threshold value of $\orpar$
predicted by our theory.  Observe how the theoretical value shows
excellent agreement with the empirical behavior.

As noted previously in Section~\ref{SecIntro}, Theorem~\ref{ThmPhase}
has some interesting consequences, particularly in comparison to the
behavior of the ``naive'' Lasso-based individual decoding of signed
supports---that is, the method that simply applies the Lasso (ordinary
$\ell_1$-regularization) to each column $\pind = 1, 2$ separately.  By
known results~\cite{Wainwright06a} on the Lasso, the performance of
this naive approach is governed by the order parameter
$\orlas(\numobs, \pdim, \spindex) \, = \, \frac{\numobs}{2 \spindex
\log(\pdim - \spindex)}$, meaning that for any $\delpar > 0$, it
succeeds for sequences such that $\orlas > 1 + \delpar$, and
conversely fails for sequences such that $\orlas < 1 - \delpar$.  To
compare the two methods, we define the relative efficiency coefficient
$R(\orpar, \orlas) \defn \orlas(\numobs, \pdim, \spindex) /
\orpar(\numobs, \pdim, \spindex, \overlap)$.  A value of $R < 1$
implies that the block method is more efficient, while $R > 1$ implies
that the naive method is more efficient.  With this notation, we have
the following:
\begin{corollary}
\label{CorScaling}
The relative efficiency of the block $\ell_{1,\infty}$
program~\eqref{EqnBlockReg} compared to the Lasso is given by
$R(\orpar, \orlas) = \frac{4 - 3 \overlap}{2} \frac{\log(\pdim -
(2-\overlap) \spindex)}{\log(\pdim - \spindex)}$. Thus, for sublinear
sparsity $\spindex/\pdim \rightarrow 0$, the block scheme has greater
statistical efficiency for all overlaps $\overlap \in (2/3,1]$, but
lower statistical efficiency for overlaps $\overlap \in [0, 2/3)$.
\end{corollary}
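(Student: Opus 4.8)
The plan is that this corollary is essentially a bookkeeping consequence of the two threshold characterizations already in hand: the phase transition of Theorem~\ref{ThmPhase}, which identifies $\orpar(\numobs, \pdim, \spindex, \overlap)$ as the sharp order parameter for the block method, together with the known Lasso threshold $\orlas(\numobs, \pdim, \spindex)$ from equation~\eqref{EqnLassoThresh}. Given these, computing the relative efficiency $R \defn \orlas/\orpar$ is a direct algebraic substitution. First I would substitute the two definitions and observe that the common factors $\numobs$ and $\spindex$ cancel:
\begin{eqnarray*}
R(\orpar, \orlas) & = & \frac{\numobs \, / \, \big[2 \spindex \log(\pdim - \spindex)\big]}{\numobs \, / \, \big[(4 - 3\overlap) \spindex \log(\pdim - (2-\overlap)\spindex)\big]} \; = \; \frac{4 - 3\overlap}{2} \cdot \frac{\log(\pdim - (2-\overlap)\spindex)}{\log(\pdim - \spindex)},
\end{eqnarray*}
which is exactly the claimed closed form. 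This step is purely mechanical and carries no probabilistic content.

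The only point requiring a small amount of care is the asymptotic behavior of the logarithmic ratio under the sublinear sparsity assumption $\spindex/\pdim \to 0$. Here I would write, for any fixed constant $c \in \{1, 2-\overlap\}$, the expansion $\log(\pdim - c\spindex) = \log\pdim + \log(1 - c\spindex/\pdim)$, and note that the second term is $o(1)$ precisely because $\spindex/\pdim \to 0$ while $c$ stays bounded (indeed $c \in [1,2]$ for $\overlap \in [0,1]$). Consequently both numerator and denominator equal $\log\pdim \, (1 + o(1))$, so their ratio tends to $1$, and therefore $R(\orpar, \orlas) \to (4 - 3\overlap)/2$ in the sublinear regime.

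Finally I would read off the sign of the comparison from the limiting constant. Since $R \to (4-3\overlap)/2$, we have $R < 1$ exactly when $4 - 3\overlap < 2$, i.e. when $\overlap > 2/3$, in which case the block scheme is the more efficient method; conversely $R > 1$ when $\overlap < 2/3$, so the naive Lasso is more efficient, with the breakeven point at $\overlap = 2/3$ where $R = 1$. I do not anticipate any genuine obstacle: the entire argument is elementary once Theorem~\ref{ThmPhase} is granted, and the sole subtlety worth stating carefully is that the $o(1)$ suppression of the log-ratio factor relies on the sublinear hypothesis $\spindex/\pdim \to 0$ (without it, the factor need not converge to $1$ and the clean threshold at $\overlap = 2/3$ would have to be restated in terms of the full ratio).
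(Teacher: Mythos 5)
Your proposal is correct and coincides with the paper's own (implicit) argument: the paper gives no separate proof of Corollary~\ref{CorScaling}, treating it as exactly the algebraic consequence of Theorem~\ref{ThmPhase} and the Lasso threshold~\eqref{EqnLassoThresh} that you carry out, including the reduction $\log(\pdim - c\spindex) = (1+o(1))\log \pdim$ for $c \in [1,2]$ under the sublinear hypothesis $\spindex/\pdim \to 0$. You also use the correct sign convention $R = \orlas/\orpar$ (stated just before the corollary, where $R<1$ favors the block method), so the threshold at $\overlap = 2/3$ follows as you claim.
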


\begin{figure}[h]
  \begin{center}
    \begin{tabular}{c}
      \widgraph{.65\textwidth}{./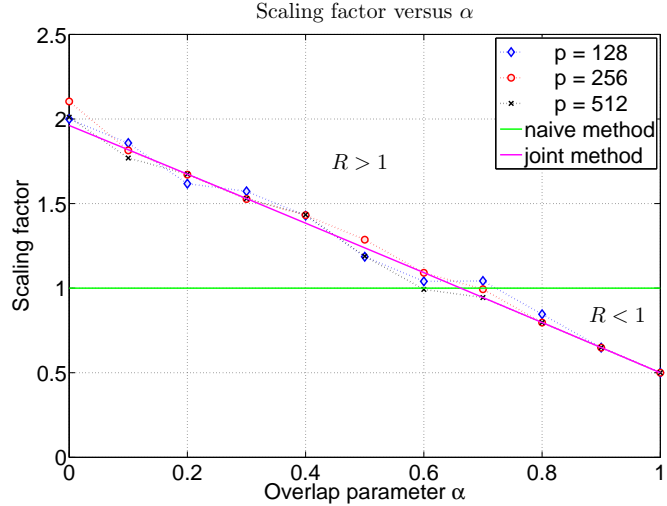}
    \end{tabular}
  \end{center}
\caption{Plots of the relative statistical efficiency $R(\overlap)$ of
a method based on block-$\ell_1/\ell_\infty$ regularization versus the
Lasso (ordinary $\ell_1$-regularization).  For each value of the
parameter $\overlap \in [0,1]$ that measures overlap between the
regression problems, the quantity $R(\alpha)$ is the ratio of sample
size required by an estimator based on block
$\ell_1/\ell_\infty$-regularization relative to the sample size
required by the Lasso (ordinary $\ell_1$-regularization).  The error
criterion here is recovery of the correct subset of active variables
in the regression.  Over a range of overlaps, the empirical thresholds
of the $\ell_1/\ell_\infty$ block regularization method closely align
with the theoretical prediction of $(4 - 3 \overlap)/2$.  The
block-based method begins to give benefits versus the ``naive''
Lasso-based method at the critical overlap $\overlap^* \approx 2/3$,
at which point the relative efficiency $R(\overlap)$ first drops below
$1$.  For overlaps $\overlap \in [0, 2/3)$, the joint method actually
requires more samples than the naive method. }
\label{FigRelEff}
\end{figure}

Figure~\ref{FigRelEff} provides an alternative perspective on the
data, where we have plotted how the sample size required by block
regression changes as a function of the overlap parameter $\overlap
\in [0,1]$.  Each set of data points plots a scaled form of the sample
size required to hit $50\%$ success, for a range of overlaps, and the
straight line $(4-3\overlap)/2$ that is predicted by
Theorem~\ref{ThmPhase} Note the excellent agreement between the
experimental results, for all three problem sizes for $\pdim \in
\{128, 256, 512 \}$, and the full range of overlaps.  The line
$(4-3\overlap)/2$ also characterizes the relative efficiency $R$ of
block regularization versus the naive Lasso-based method, as described
in Corollary~\ref{CorScaling}.  For overlaps $\overlap > 2/3$, this
parameter $R$ drops below $1$.  On the other hand, for overlaps
$\overlap < 1$, we have $R > 1$, so that applying the joint
optimization problem actually decreases statistical efficiency.
Intuitively, although there is still some fraction of overlap, the
regularization is misleading, in that it tries to enforce a higher
degree of shared sparsity than is actually present in the data.


\section{Proofs}
\label{SecProof}

This section contains the proofs of our three theorems.  Our proofs
are constructive in nature, based on a procedure that constructs pair
of matrices $\Bwit = (\bwit{1}, \ldots, \bwit{\numreg}) \in
\real^{\pdim \times \numreg}$ and $\Dwit = (\dwit{1}, \ldots,
\dwit{\numreg}) \in \real^{\pdim \times \numreg}$.  The goal of the
construction is to show that matrix $\Bwit$ is an optimal primal
solution to the convex program~\eqref{EqnBlockReg}, and that the
matrix $\Dwit$ is a corresponding dual-optimal solution, meaning that
it belongs to the sub-differential of the $\ell_{1,\infty}$-norm (see
Lemma~\ref{LemSubdiff}), evaluated at $\Bwit$.  If the construction
succeeds, then the pair $(\Bwit, \Dwit)$ acts as a witness for the
success of the convex program~\eqref{EqnBlockReg} in recovering the
correct signed support---in particular, success of the primal-dual
witness procedure implies that $\Bwit$ is the unique optimal solution
of the convex program~\eqref{EqnBlockReg}, with its row support
contained with $\Joint$.  To be clear, the procedure for constructing
this candidate primal-dual solution is \emph{not} a practical
algorithm (as it exploits knowledge of the true support sets), but
rather a proof technique for certifying the correctness of the
block-regularized program.

We begin by providing some background on the sub-differential of the
$\ell_1/\ell_\infty$ norm; we refer the reader to the
books~\cite{Rockafellar,Hiriart1} for more background on convex
analysis.

\subsection{Structure of $\ell_1/\ell_\infty$-norm sub-differential}
\label{SecSubdiff}

The sub-differential of a convex function $f:\real^d \rightarrow
\real$ at a point $x \in \real^d$ is the set of all vectors $y \in
\real^d$ such that $f(x') \geq f(x) + \inprod{y}{x'-x}$ for all $x'
\in \real^d$.  See the standard references~\cite{Rockafellar,Hiriart1}
for background on subdifferentials and their properties.

We state for future reference a characterization of the
sub-differential of the $\ell_1/\ell_\infty$ block norm:
\begin{lemma} 
\label{LemSubdiff}
The matrix $\Dwit \in \real^{\pdim \times \numreg}$ belongs to the
sub-differential $\partial \bnorminf{\Bwit}$ if and only if the
following conditions hold for each $\jind = 1, \ldots, \pdim$.

\begin{enumerate}
\item[(i)] If $\bwit{\pind}_\jind \neq 0$ for at least one index
$\pind \in \{1, \ldots, \numreg\}$, then 
\begin{eqnarray*}
\dwit{\pind}_\jind & = & \begin{cases} t_\pind
\sign(\bwit{\pind}_\jind) & \mbox{if $\pind \in \Mset_\jind$} \\
0 & \mbox{otherwise}.
                         \end{cases},
\end{eqnarray*}
where $\Mset_\jind \defn \arg \max \limits_{\pind = 1, \ldots, \numreg}
|\bwit{\pind}_\jind|$, for a set of non-negative scalars $\{t_\pind,
\; \pind \in \Mset_\jind\}$ such that $\sum_{\pind \in \Mset_\jind} t_\pind =
1$.
\item[(ii)] If $\bwit{\pind}_\jind = 0$ for all $\pind = 1, \ldots,
\numreg$, then we require $\sum_{\pind=1}^\numreg |\dwit{\pind}_\jind|
\leq 1$.
\end{enumerate}
\end{lemma}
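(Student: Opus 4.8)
The plan is to exploit the fact that the block norm $\bnorminf{\Bwit} = \sum_{\jind=1}^\pdim \|(\bwit{1}_\jind, \ldots, \bwit{\numreg}_\jind)\|_\infty$ is \emph{separable} across the rows of $\Bwit$: each summand depends only on the entries of a single row, and distinct rows involve disjoint coordinates. By the standard calculus of subdifferentials for a sum of convex functions acting on disjoint blocks of variables, the subdifferential then decomposes as a product over rows (see \cite{Rockafellar,Hiriart1}). Concretely, $\Dwit \in \partial \bnorminf{\Bwit}$ if and only if, for each $\jind$, the row $(\dwit{1}_\jind, \ldots, \dwit{\numreg}_\jind)$ belongs to the subdifferential of the map $u \mapsto \|u\|_\infty$ on $\real^\numreg$, evaluated at the row $(\bwit{1}_\jind, \ldots, \bwit{\numreg}_\jind)$. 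This reduces the lemma to characterizing $\partial \|\cdot\|_\infty$ at a single point of $\real^\numreg$.

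For that, I would invoke the general description of the subdifferential of a norm $\|\cdot\|$ in terms of its dual norm $\|\cdot\|_*$: at the origin it equals the dual unit ball $\{y : \|y\|_* \leq 1\}$, and at a nonzero point $x$ it equals $\{y : \|y\|_* \leq 1, \ \inprod{y}{x} = \|x\|\}$. Since the dual of $\ell_\infty$ on $\real^\numreg$ is $\ell_1$, the all-zero-row case immediately yields condition (ii), namely $\sum_{\pind} |\dwit{\pind}_\jind| \leq 1$.

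The substantive part is the nonzero-row case, where I must show that $\{y : \|y\|_1 \leq 1, \ \inprod{y}{x} = \|x\|_\infty\}$ coincides with the set in condition (i), writing $x = (\bwit{1}_\jind, \ldots, \bwit{\numreg}_\jind)$ and $y = (\dwit{1}_\jind, \ldots, \dwit{\numreg}_\jind)$. I would begin from the chain $\inprod{y}{x} \leq \sum_{\pind} |y_\pind|\,|x_\pind| \leq \|x\|_\infty \, \|y\|_1 \leq \|x\|_\infty$ and force equality at every step. Equality in the first bound requires $\sign(y_\pind) = \sign(x_\pind)$ wherever $y_\pind \neq 0$; equality in the second forces $y_\pind = 0$ at every coordinate with $|x_\pind| < \|x\|_\infty$, i.e. $y$ is supported on the argmax set $\Mset_\jind$; and equality in the third forces $\|y\|_1 = 1$. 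Setting $t_\pind = |y_\pind|$ then recovers exactly the representation $\dwit{\pind}_\jind = t_\pind \, \sign(\bwit{\pind}_\jind)$ for $\pind \in \Mset_\jind$ (and $0$ otherwise) with nonnegative weights summing to one. The reverse inclusion follows by direct computation of $\inprod{y}{x}$ and $\|y\|_1$ for any $y$ of this form.

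The main obstacle here is less difficulty than bookkeeping: the argmax set $\Mset_\jind$ may contain several indices (ties among the $|\bwit{\pind}_\jind|$), and the equality analysis must show that the dual weights $t_\pind$ are free to distribute arbitrarily over $\Mset_\jind$ subject only to nonnegativity and the sum-to-one constraint, while being pinned to zero off $\Mset_\jind$. I would also make the separability step explicit, since the row-by-row decomposition of the subdifferential is precisely what licenses reducing to the one-row computation; although this is a routine consequence of convexity, stating it cleanly keeps the argument self-contained.
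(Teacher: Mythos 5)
Your proposal is correct, and there is nothing in the paper to compare it against: Lemma~\ref{LemSubdiff} is stated without proof, with the characterization deferred to the standard convex-analysis references~\cite{Rockafellar,Hiriart1}. Your argument---row-wise separability of $\bnorminf{\cdot}$ reducing the problem to $\partial \|\cdot\|_\infty$ on $\real^\numreg$, followed by the dual-norm description $\partial \|x\| = \{y : \|y\|_* \leq 1, \ \inprod{y}{x} = \|x\|\}$ with $\ell_1$ dual to $\ell_\infty$, and the equality-chain analysis at nonzero rows---is exactly the standard derivation those references license, and you correctly handle the one delicate point: it is the middle equality $\sum_\pind |y_\pind| |x_\pind| = \|x\|_\infty \|y\|_1$ that pins the support of $y$ to the argmax set $\Mset_\jind$ (so that $x_\pind \neq 0$ wherever the sign condition is invoked), while leaving the nonnegative weights $t_\pind$ free over ties subject only to summing to one.
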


\subsection{Primal-dual construction}

We now describe our method for constructing the matrix pair $(\Bwit,
\Dwit)$.  Recalling that \mbox{$\Joint = \bigcup_{\pind=1}^\numreg
\Sset(\bstar{\pind})$} denotes the union of supports of the true
regression vectors, let $\Jointcom$ denote the complement of
$\{1,\ldots, \pdim\} \backslash \Joint$.  With this notation,
Figure~\ref{FigWitness} provides the four steps of the primal-dual
witness construction.

\begin{figure}[h]
\begin{center}
\framebox[.95\textwidth]{\parbox{0.9\textwidth}{
\noindent {\bf{Primal-dual witness construction:}}
\begin{enumerate}
\item[(A)] First, we solve the restricted program
\begin{equation}
\label{EqnRestricted}
\Bwit = \arg \min_{ B \in \real^{\pdim \times \numreg}, B_{\Jointcom}
  = 0 } \big \{ \frac{1}{2 \numobs} \sum_{\pind=1}^\numreg \|
  \yobs{\pind} - \Xmat{\pind} \bvec{\pind} \|_2^2 + \relaxn
  \bnorm{B}{1}{\infty} \big \}.
\end{equation}
Given our assumption that the $|\Joint| \times |\Joint|$ sub-matrices
$\inprod{\Xmatf{\pind}{\Joint}}{\Xmatf{\pind}{\Joint}}$ are
invertible, the solution to this convex program is unique.  Moreover,
note that $\Bwit_\Jointcom = 0$ by construction.
\item[(B)] We choose $\Dwit_\Joint \in \real^{|\Joint| \times
\numreg}$ as an element of the subdifferential $\partial
\bnorminf{\Bwit_\Joint}$.
\item[(C)] Using the optimality conditions associated with the
original convex program~\eqref{EqnBlockReg}, we then solve for the
matrix $\Dwit_\Jointcom$, and verify that its rows satisfy the
\emph{strict dual feasibility} condition
\begin{eqnarray}
\label{EqnStrictDual}
\sum_{\pind=1}^\numreg |\dwit{\pind}_\jind| & < & 1 \qquad \mbox{for
all $\jind \in \Jointcom$.}
\end{eqnarray}
\item[(D)] A final (optional) step is to verify that $\Bwit_\Joint$
satisfies the \emph{sign consistency} conditions $\sign(\Bwit_\Joint)
= \sign(\Bstar_\Joint)$.

\end{enumerate}
}}
\caption{Steps in the primal-dual witness construction.  Steps (A) and
(B) are straightforward; the main difficulties lie in verifying the
strict dual feasibility and sign consistency conditions stated in step
(C) and (D).}
\label{FigWitness}
\end{center}
\end{figure}

\noindent The following lemma summarizes the utility of the
primal-dual witness method:
\begin{lemma}
\label{LemKey}
Suppose that for each $\pind = 1, \ldots, \numreg$, the $|\Joint|
\times |\Joint|$ sub-matrix
$\inprod{\Xmatf{\pind}{\Joint}}{\Xmatf{\pind}{\Joint}}$ is invertible.
Then for any $\relaxn > 0$, we have the following correspondences:
\begin{enumerate}
\item[(i)] If steps (A) through (C) of the primal-dual construction
succeed, then $(\Bwitf{\Joint}, 0) \in \real^{\pdim \times \numreg}$ is
the unique optimal solution of the original convex
program~\eqref{EqnBlockReg}.
\item[(ii)] Conversely, suppose that there is a solution $\Best \in
\real^{\pdim \times \numreg} $ to the convex
program~\eqref{EqnBlockReg} with support contained within $\Joint$.
Then steps (A) through (C) of the primal-dual witness construction
succeed.
\end{enumerate}
\end{lemma}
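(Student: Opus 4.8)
The plan is to read everything off the zero-subgradient (KKT) optimality conditions for the convex program~\eqref{EqnBlockReg}. Writing the data term as $G(B) = \frac{1}{2\numobs}\sum_{\pind=1}^\numreg \|\yobs{\pind} - \Xmat{\pind}\beta^\pind\|_2^2$, the map $B \mapsto G(B) + \regpar\,\bnorminf{B}$ is convex, so a matrix $\Best$ is globally optimal if and only if there exists $\Dwit \in \partial\bnorminf{\Best}$ such that, for every column $\pind = 1, \ldots, \numreg$,
\begin{equation}
\label{EqnStationarity}
\frac{1}{\numobs}\Xmatt{\pind}\big(\Xmat{\pind}\best{\pind} - \yobs{\pind}\big) + \regpar\,\dwit{\pind} \;=\; 0 .
\end{equation}
Because the block norm decouples over rows, Lemma~\ref{LemSubdiff} lets me verify membership $\Dwit \in \partial\bnorminf{\Best}$ one row at a time, while the only coupling between the $\numreg$ problems lives in the regularizer term of~\eqref{EqnStationarity}. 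I will use $\inprod{A}{B} = \sum_{\jind,\pind} A_{\jind\pind}B_{\jind\pind}$ for the trace pairing of matrices.

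For part (i), I would first check that the pair $(\Bwit,\Dwit)$ produced by steps (A)--(C) satisfies~\eqref{EqnStationarity} together with $\Dwit \in \partial\bnorminf{\Bwit}$, so that $\Bwit$ is optimal. The $\Joint$-block of~\eqref{EqnStationarity} is exactly the first-order condition of the restricted program of step (A), with $\Dwit_\Joint$ chosen in step (B) as a legitimate subgradient; the $\Jointcom$-block is solved explicitly in step (C) to define $\Dwit_\Jointcom$. Since $\Bwit_\Jointcom = 0$ by construction, Lemma~\ref{LemSubdiff}(ii) requires only $\sum_{\pind}|\dwit{\pind}_\jind| \le 1$ for each $\jind \in \Jointcom$, which is implied by the strict feasibility~\eqref{EqnStrictDual}, while rows in $\Joint$ are covered by step (B). Hence $\Dwit \in \partial\bnorminf{\Bwit}$ and $\Bwit$ is a global optimum.

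The delicate part of (i) is uniqueness, where the \emph{strict} inequality in~\eqref{EqnStrictDual} does the real work. For any optimal $\Best$, combining the subgradient inequality for the norm with the first-order expansion of the convex quadratic $G$ at $\Bwit$ (using $\nabla G(\Bwit) = -\regpar\,\Dwit$ from~\eqref{EqnStationarity}) forces the chain $G(\Best)+\regpar\bnorminf{\Best} \ge G(\Bwit)+\regpar\bnorminf{\Bwit}$ to hold with equality throughout, and in particular $\inprod{\Dwit}{\Best} = \bnorminf{\Best}$. Decomposing this identity row by row and bounding each row term by $\big(\sum_\pind|\dwit{\pind}_\jind|\big)\big(\max_\pind|\best{\pind}_\jind|\big)$, the strict bound $\sum_\pind|\dwit{\pind}_\jind| < 1$ on rows $\jind \in \Jointcom$ is consistent with equality only if $\max_\pind|\best{\pind}_\jind| = 0$ there; thus every optimal solution has $\Best_\Jointcom = 0$. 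Restricted to $\Joint$, such a $\Best$ must minimize the step-(A) program, which is strictly convex since each $\inprod{\Xmatf{\pind}{\Joint}}{\Xmatf{\pind}{\Joint}}$ is invertible, so $\Best_\Joint = \Bwitf{\Joint}$ and the optimum is unique.

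For the converse (ii), I start from a given optimal $\Best$ with $\Best_\Jointcom = 0$. Then $\Best_\Joint$ is feasible and optimal for the restricted program, and by the same strict-convexity argument it coincides with $\Bwitf{\Joint}$, so the construction reproduces this solution. Optimality of $\Best$ in the full program supplies, through~\eqref{EqnStationarity}, a subgradient whose $\Joint$-block is an admissible choice in step (B) and whose $\Jointcom$-block equals the quantity computed in step (C) (the formula for $\Dwit_\Jointcom$ is forced by the $\Jointcom$-stationarity and is independent of which optimal solution one starts from). Lemma~\ref{LemSubdiff} then yields dual feasibility of this $\Dwit_\Jointcom$, so steps (A)--(C) go through. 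I expect the main obstacle to lie precisely here, in reconciling the \emph{strict} feasibility~\eqref{EqnStrictDual} checked in step (C) with the merely \emph{weak} feasibility $\big(\sum_\pind|\dwit{\pind}_\jind| \le 1\big)$ that the subgradient condition guarantees. The clean statement to extract is the contrapositive used later for the failure bound: since $\Dwit_\Jointcom$ is uniquely determined, any violation $\sum_\pind|\dwit{\pind}_\jind| > 1$ of even weak feasibility certifies that no optimal solution is supported on $\Joint$. Treating the boundary case $\sum_\pind|\dwit{\pind}_\jind| = 1$ carefully---it is avoided almost surely under the continuous Gaussian design---is the one point that will demand the most attention.
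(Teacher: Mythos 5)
Your proposal is correct and follows essentially the same route as the paper's proof: zero-subgradient (KKT) conditions plus Lemma~\ref{LemSubdiff} for optimality, the complementary-slackness identity $\inprod{\Dwit}{\Best} = \bnorminf{\Best}$ (which the paper derives via the saddle-point representation $\bnorminf{B} = \sup_{\Dwit \in \Kball} \inprod{\Dwit}{B}$ and you derive equivalently via the subgradient inequality and a row-wise H\"older bound) combined with strict dual feasibility to force $\Best_{\Jointcom} = 0$, strict convexity of the restricted program for uniqueness, and the same block decomposition of the stationarity conditions for part (ii). You also correctly flag the weak-versus-strict feasibility subtlety in part (ii) that the paper's proof passes over silently, and your contrapositive reading---a strict violation $\sum_{\pind}|\dwit{\pind}_\jind| > 1$ certifies that no solution is supported on $\Joint$---is precisely how Lemma~\ref{LemKey}(ii) is actually deployed in the proof of Theorem~\ref{ThmPhase}(b).
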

\noindent We provide the proof of Lemma~\ref{LemKey} in
Appendix~\ref{AppLemKey}.  It is convex-analytic in nature, based on
exploiting the subgradient optimality conditions associated with both
the restricted convex program~\eqref{EqnRestricted} and the original
program~\eqref{EqnBlockReg}, and performing some algebra to
characterize when the convex program recovers the correct signed
support.  Lemma~\ref{LemKey} lies at the heart of all three of our
theorems.  In particular, the positive results of
Theorem~\ref{ThmDetDesign}, Theorem~\ref{ThmGauss} and
Theorem~\ref{ThmPhase}(a) are based on claims (i) and (iii), which
show that it is sufficient to verify that the primal-dual witness
construction succeeds with high probability.  The negative result of
Theorem~\ref{ThmPhase}(b), in contrast, is based on part (ii), which
can be restated as asserting that if the primal-dual witness
construction fails, then no solution has support contained with
$\Joint$.

Before proceeding to the proofs themselves, we introduce some
additional notation and develop some auxiliary results concerning the
primal-dual witness procedure, to be used in subsequent development.
With reference to steps (A) and (B), we show in
Appendix~\ref{AppLemKey} that unique solution $\Bwit_\Joint$ has the
form
\begin{equation}
\label{EqnDefnVecWit}
\Bwit_\Joint \, = \, \Bstar_\Joint + \Uvar_\Joint,
\end{equation}
where the matrix $\Uvar_\Joint \in \real^{|\Joint|
\times \numreg}$ has columns
\begin{eqnarray}
\label{EqnDefnUvar}
\Nuvar{\pind} & \define & \big ( \frac{1}{\numobs}
\inprod{\Xmatf{\pind}{\Joint}}{\Xmatf{\pind}{\Joint}}\big )^{-1}
\biggr [ \frac{1}{\numobs}
\inprod{\Xmatf{\pind}{\Joint}}{\wnoise{\pind}} - \relaxn
\dwitf{\pind}{\Joint} \biggr], \qquad \mbox{for $\pind = 1, \ldots,
\numreg$,}
\end{eqnarray}
and $\dwitf{\pind}{\Joint}$ is the $\pind^{th}$ column of the
sub-gradient matrix $\Dwitf{\Joint}$.

With reference to step (C), we obtain the candidate dual solution
$\Dwitf{\Jointcom} \in \real^{|\Jointcom| \times \numreg}$ as follows.
For each $\pind = 1, \ldots, \numreg$, let
$\proj_{\Xmatf{\pind}{\Joint}}$ denote the orthogonal projection onto
the range of $\Xmatf{\pind}{\Joint}$.  Using the sub-matrix
$\Dwitf{\Joint} \in \real^{|\Joint| \times \numreg}$ obtained from
step (B), we define column $\pind$ of the matrix $\Dwitf{\Jointcom}$
as follows:
\begin{eqnarray}
\label{EqnDefnDualwitOff}
\dwitf{\pind}{\Jointcom} & = & \frac{1}{\relaxn \numobs}
\binprod{\Xmatf{\pind}{\Jointcom} } {
(I-\proj_{\Xmatf{\pind}{\Joint}}) \wnoise{\pind} } + \frac{1}{\numobs}
\binprod{\Xmatf{\pind}{\Jointcom}} {\Xmatf{\pind}{\Joint}
(\frac{1}{\numobs}
\inprod{\Xmatf{\pind}{\Joint}}{\Xmatf{\pind}{\Joint}})^{-1}
\dwitf{\pind}{\Joint}} \quad \mbox{for $\pind = 1, \ldots,
\numreg$. $\qquad$}
\end{eqnarray}
See the end of Appendix~\ref{AppLemKey} for derivation of this
condition.

Finally, in order to further simplify notation in our proofs, for each
$\jind \in \Jointcom$, we define the random variable
\begin{eqnarray}
  \label{EqnDefnV}
\Vvar_\jind & \define &  \sum_{\pind=1}^\numreg |\dwitf{\pind}{\jind}|
\end{eqnarray}
With this notation, the strict dual feasibility
condition~\eqref{EqnStrictDual} is equivalent to the event \mbox{$\{
\max \limits_{\jind \in \Jointcom} \Vvar_\jind < 1 \}$.}

\section{Proof of Theorem~\ref{ThmDetDesign}}

We begin by establishing a set of sufficient conditions for
deterministic design matrices, as stated in
Theorem~\ref{ThmDetDesign}.

\subsection{Establishing strict dual feasibility}

We begin by obtaining control on the probability of the event
$\Event(\Vvar)$, so as to show that step (C) of the primal-dual
witness construction succeeds.  Recall that
$\proj_{\Xmatf{\pind}{\Joint}}$ denotes the orthogonal projection onto
the range space of $\Xmatf{\pind}{\Joint}$, and the
definition~\eqref{EqnMutIncoDet} of the incoherence parameter
$\incopar \in (0,1]$.  By the mutual incoherence
condition~\eqref{EqnMutIncoDet}, we have
\begin{eqnarray}
\label{EqnKeyInco}
\max_{\jind \in \Jointcom} \biggr \{ \sum_{\pind=1}^\numreg \big
|\frac{1}{\numobs} \binprod{\Xmatf{\pind}{\jind}}{
  \Xmatf{\pind}{\Joint} \; \big(\inprod{\frac{1}{\numobs}
    \Xmatf{\pind}{\Joint}} {\Xmatf{\pind}{\Joint}} \big)^{-1}
  \dwitf{\pind}{\Joint} } \biggr \} & \leq & 1-\incopar,
\end{eqnarray}
where we have used the fact that
$\sum_{\pind=1}^\numreg |\dwit{\pind}_\kind| = 1$ for each $\kind \in \Joint$.
Recalling that
\mbox{$\Vvar_\jind = \sum_{\pind=1}^\numreg |\dwitf{\pind}{\jind}|$}
and using the definition~\eqref{EqnDefnDualwitOff}, we have by
triangle inequality
\begin{equation*}
\Prob[\max_{\jind \in \Jointcom} \Vvar_\jind > 1] \; \leq \;
\mprob[\Aevent(\incopar)],
\end{equation*}
where we have defined the event
\begin{eqnarray}
\Aevent(\incopar) & \defn & \big \{ \max_{\jind \in \Jointcom}
    \sum_{\pind=1}^\numreg \big | \frac{1}{\relaxn \numobs}
    \inprod{\Xmatf{\pind}{\jind}}{ (I - \proj_{\Xmatf{\pind}{\Joint}})
    \wnoise{\pind}} \big | \geq \incopar\big \}.
\end{eqnarray}

To analyze this remaining probability, for each index $\pind = 1,
\ldots, \numreg$ and $\jind \in \Jointcom$, define the random variable
\begin{eqnarray}
\label{EqnZeroMean}
\Wnew{\pind}{\jind} & \defn & \frac{1}{\relaxn \numobs}
\binprod{\Xmatf{\pind}{\jind}}{(I - \proj_{\Xmatf{\pind}{\Joint}})}
\wnoise{\pind}.
\end{eqnarray}
Since the elements of the $\numobs$-vector $\wnoise{\pind}$ follow a
$N(0, \sigma^2)$ distribution, the variable $\Wnew{\pind}{\jind}$ is
zero-mean Gaussian with variance $\frac{\sigma^2}{\relaxn^2 \,
\numobs^2} \binprod{\Xmatf{\pind}{\jind}}{(I -
\proj_{\Xmatf{\pind}{\Joint}}) \Xmatf{\pind}{\jind}}$.  Since
$\|\Xmatf{\pind}{\jind}\|_2^2 \leq 2 \numobs$ by assumption and $(I -
\proj_{\Xmatf{\pind}{\Joint}})$ is an orthogonal projection matrix,
the variance of each $\Wnew{\pind}{\jind}$ is upper bounded by
$\frac{2 \sigma^2}{\relaxn^2 \numobs}$.  Consequently, for any choice
of sign vector $\mysign \in \{-1,+1\}^\numreg$, the variance of the
zero-mean Gaussian $\sum_{\pind=1}^\numreg \mysign_\pind
\Wnew{\pind}{\jind}$ is upper bounded by $\frac{2 \numreg
\sigma^2}{\relaxn^2 \numobs}$.

Consequently, by taking the union bound over all sign vectors
and over indices $\jind \in \Jointcom$, we have
\begin{eqnarray*}
\Prob[\Aevent(\incopar)] \; = \; \Prob \big[\max_{\jind \in \Jointcom}
     \max_{\mysign \in \{-1,+1\}^\numreg} \sum_{\pind=1}^\numreg
     \mysign_\pind \Wnew{\pind}{\jind} > \incopar\big] & \leq & 2 \,
     \exp \big( -\frac{\relaxn^2 \numobs \incopar^2}{4 \numreg
     \sigma^2} + \numreg + \log \pdim \big).
\end{eqnarray*}
With the choice $\relaxn^2 \geq \frac{4 \keypar \sigma^2}{\incopar^2}
\frac{\numreg^2 + \numreg \log (\pdim)}{\numobs}$ for some $\keypar >
1$, we conclude that
\begin{eqnarray*}
\mprob[\Event(\Vvar)] & \geq & 1 - 2 \exp(-(\keypar-1) [\numreg + \log
\pdim]) \; \rightarrow \; 1.
\end{eqnarray*}
By Lemma~\ref{LemKey}(i), this event implies the uniqueness of the
solution $\Best$, and moreover the inclusion of the supports
$\Sset(\Best) \subseteq \Sset(\Bstar)$, as claimed.

\subsection{Establishing $\ell_\infty$ bounds}

We now turn to establishing the claimed
$\ell_\infty$-bound~\eqref{EqnEllinfDet} on the difference $\Best -
\Bstar$.  We have already shown that this difference is exactly zero
for rows in $\Jointcom$; it remains to analyze the difference
$\Uvar_\Joint = \Best_\Joint - \Bstar_\Joint$.  It suffices to prove
the $\ell_\infty$ bound for the columns $\Uvarf{\pind}{\Joint}$
separately, for each $\pind =1, \ldots, \numreg$.

We split the analysis of the random variable $\max_{\jind \in \Joint}
|\Uvarf{\pind}{\jind}|$ into two terms, based on the form of $\Uvar$
from equation~\eqref{EqnDefnUvar}, one involving the dual variables
$\dwitf{\pind}{\Joint}$, and the other involving the observation noise
$\wnoise{\pind}$, as follows:
\begin{eqnarray*}
\max_{\jind \in \Joint} |\Uvarf{\pind}{\jind}| & \leq &
    \underbrace{\big \| \big(\frac{1}{\numobs}
    \inprod{\Xmatf{\pind}{\Joint}} {\Xmatf{\pind}{\Joint}} \big)^{-1}
    \frac{1}{\numobs} \inprod{\Xmatf{\pind}{\Joint}}{\wnoise{\pind}}
    \big \|_\infty} + \underbrace{ \big \| \big(
    \inprod{\frac{1}{\numobs}
    \Xmatf{\pind}{\Joint}}{\Xmatf{\pind}{\Joint}} \big )^{-1} \relaxn
    \dwitf{\pind}{\Joint} \big \|_\infty.}  \\
& & \quad \qquad \qquad \Uterma^\pind \qquad \qquad \qquad \qquad
\qquad \qquad \Utermb^\pind
\end{eqnarray*}
The second term is easy to control: from the characterization of the
subdifferential (Lemma~\ref{LemSubdiff}), we have
$\norm{\dwitf{\pind}{\Joint}}_{\infty} \leq 1$, so that $\Utermb^\pind
\leq \relaxn \matsnorm{( \inprod{\frac{1}{\numobs}
\Xmatf{\pind}{\Joint}}{\Xmatf{\pind}{\Joint}} )^{-1}}{\infty} \; \leq
\; \Dmax \relaxn$.

Turning to the first term $\Uterma^\pind$, we note that since
$\Xmatf{\pind}{\Joint}$ is fixed, the $|\Joint|$-dimensional random
vector $Y \defn \big(\binprod{\frac{1}{\numobs}
\Xmatf{\pind}{\Joint}}{ \Xmatf{\pind}{\Joint}} \big)^{-1}
\frac{1}{\numobs} \inprod{\Xmatf{\pind}{\Joint}}{\wnoise{\pind}}$ is
zero-mean Gaussian, with covariance $\frac{1}{\numobs}
\big(\binprod{\frac{1}{\numobs}
\Xmatf{\pind}{\Joint}}{\Xmatf{\pind}{\Joint}} \big)^{-1}$.  Therefore,
we have $\var(Y_\jind) \leq \frac{1}{\Cmin \numobs}$, and can use this
in standard Gaussian tail bounds.  By applying the union bound twice,
first over $\jind \in \Joint$, and then over $\pind \in \{1, 2,
\ldots, \numreg \}$, we obtain 

\begin{eqnarray*}
\mprob[ \max_{\pind = 1, \ldots, \numreg} \Uterma^\pind \geq t] & \leq
& 2 \exp(-t^2 \numobs \Cmin/(2) + \log (\numreg \spindex) + \log
\numreg),
\end{eqnarray*}
where we have used the fact that $|\Joint| \leq \numreg \spindex$.
Setting $t = \keypar\; \sqrt{\frac{4 \log (\numreg \spindex)}{\Cmin
\numobs}}$ yields that
\begin{eqnarray*}
\max_{\pind =1, \ldots, \numreg} \quad \max_{\jind \in \Joint}
|\Uvarf{\pind}{\jind}| & \leq & \keypar \sqrt{\frac{4}{\Cmin} \;
  \frac{ \log \numreg \spindex}{\numobs}} + \Dmax \relaxn \; = \,:
\Bou,
\end{eqnarray*}
with probability greater than $1- 2\exp(-(\keypar^2-1)\log (\numreg
\spindex))$, as claimed.

Finally, to establish support recovery, recall that we proved above
that $\Nuvar{\pind}$ is bounded by $\Bou$. Hence, as long as $\betamin
> \Bou$, then we are guaranteed that if $\Bstar^\pind_\jind \neq 0$,
then $\Best^\pind_\jind \neq 0$.


\section{Proof of Theorem~\ref{ThmGauss}}
\label{SecProofThmGauss}

We now turn to the proof of Theorem~\ref{ThmGauss}, providing
sufficient conditions for general Gaussian ensembles.  Recall that for
$\pind = 1, 2, \ldots, \numreg$, each $\Xmat{\pind} \in \real^{\numobs
\times \pdim}$ is a random design matrix, with rows drawn
i.i.d. from a zero-mean Gaussian with $\pdim \times \pdim$ covariance
matrix $\Covmati{\pind}$.

\subsection{Establishing strict dual feasibility}
\label{SubSecGaussRand}
Recalling that \mbox{$\Vvar_\jind = \sum_{\pind=1}^\numreg
  |\dwitf{\pind}{\jind}|$} and using the
definition~\eqref{EqnDefnDualwitOff}, we have the decomposition
\begin{eqnarray*}
\max_{\jind \in \Jointcom} |\Vvar_\jind| & \leq &
    \underbrace{\max_{\jind \in \Jointcom} \sum_{\pind=1}^\numreg
    |\frac{1}{\relaxn \numobs} \binprod{ \Xmatf{\pind}{\jind} } {
    (I-\proj_{\Xmatf{\pind}{\Joint}}) \wnoise{\pind} }| } \quad +
    \quad
\underbrace{\max_{\jind \in \Jointcom} \sum_{\pind=1}^\numreg \big
  |\frac{1}{\numobs} \binprod{\Xmatf{\pind}{\jind}}
  {\Xmatf{\pind}{\Joint} (\frac{1}{\numobs}
    \inprod{\Xmatf{\pind}{\Joint}}{\Xmatf{\pind}{\Joint}})^{-1}
    \dwitf{\pind}{\Joint}} \big|.}  \\
& & \qquad \qquad \qquad \quad \Mterm_1 \qquad \qquad \qquad \qquad \qquad \qquad
\qquad \qquad \qquad \quad \Mterm_2
\end{eqnarray*}
In order to show that $\max_{\jind \in \Jointcom} |\Vvar_\jind| < 1$
with high probability, we deal with each of these two terms in turn,
showing that $\Mterm_1 < \incopar/2$, and $\Mterm_2 < 1 - \incopar/2$,
both with high probability.

In order to bound $\Mterm_1$, we require the following condition on
the columns of the design matrices:
\begin{lemma}
\label{LemColControl}
Let $\sigma_{\max} = \max_i \Sigma$. For $\numobs > 2 \log (\numreg \pdim)$, each column of the design
matrices $\Xmat{\pind}, \pind = 1, \ldots, \numreg$ has controlled
$\ell_2$-norm:
\begin{eqnarray}
\mprob \big[ \max_{\pind = 1, \ldots, \numreg} \;\max_{\jind=1,
\ldots, \pdim} \|\Xmatf{\pind}{\jind}\|_2^2 \leq 2 \sigma_{\max} \numobs \big] &
\leq & 2 \exp \big( -\frac{\numobs}{2} + \log (\pdim \numreg) \big) \;
\rightarrow 0.
\end{eqnarray}
\end{lemma}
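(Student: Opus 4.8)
The plan is to reduce the claim to a tail bound for a single column and then take a union bound over all $\numreg\pdim$ columns. Fix a regression index $\pind \in \{1,\ldots,\numreg\}$ and a column index $\jind \in \{1,\ldots,\pdim\}$. Since the rows of $\Xmat{\pind}$ are drawn i.i.d.\ from $N(0,\Covmati{\pind})$, the entries of the $\numobs$-vector $\Xmatf{\pind}{\jind}$ are i.i.d.\ $N(0,\Covmatif{\pind}{\jind\jind})$, with variance $\Covmatif{\pind}{\jind\jind}\le\sigma_{\max}$. Consequently $\|\Xmatf{\pind}{\jind}\|_2^2$ is distributed as $\Covmatif{\pind}{\jind\jind}\,\chi^2_\numobs$, a scaled chi-squared variable on $\numobs$ degrees of freedom with mean $\Covmatif{\pind}{\jind\jind}\,\numobs \le \sigma_{\max}\numobs$. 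It therefore suffices to upper bound the probability of the complementary (failure) event $\{\,\|\Xmatf{\pind}{\jind}\|_2^2 > 2\sigma_{\max}\numobs\,\}$, namely the event that the squared column norm exceeds twice its worst-case mean.

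For a single column I would invoke a standard chi-squared (sub-exponential) tail bound. Using $\Covmatif{\pind}{\jind\jind}\le\sigma_{\max}$, the failure event is contained in $\{\chi^2_\numobs \ge 2\numobs\}$, and a Chernoff estimate on the chi-squared moment generating function gives $\Prob[\chi^2_\numobs \ge 2\numobs] \le \exp(-c\,\numobs)$ for an absolute constant $c>0$. The key point is that choosing the threshold to be the \emph{constant multiple} $2\sigma_{\max}\numobs$ (rather than $(1+\delta)\sigma_{\max}\numobs$ for small $\delta$) keeps the deviation comparable to the mean, which places us in the regime where the chi-squared tail decays purely exponentially in $\numobs$; this is what produces a bound of the stated form $\exp(-\numobs/2)$.

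It then remains to union bound over the $\numreg\pdim$ pairs $(\pind,\jind)$. Each term is controlled by the single-column estimate, so the union bound contributes an additive $\log(\numreg\pdim)$ in the exponent, yielding
\begin{equation*}
\mprob\Big[\max_{\pind=1,\ldots,\numreg}\,\max_{\jind=1,\ldots,\pdim}\|\Xmatf{\pind}{\jind}\|_2^2 > 2\sigma_{\max}\numobs\Big] \;\le\; 2\exp\big(-\tfrac{\numobs}{2}+\log(\pdim\numreg)\big).
\end{equation*}
Under the hypothesis $\numobs > 2\log(\numreg\pdim)$ the exponent is negative, so this probability tends to $0$, which is the desired conclusion.

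The only substantive ingredient is the single-column chi-squared tail estimate; the reduction to a scaled chi-squared and the union bound are routine. Thus the main (and rather minor) obstacle is selecting a concentration inequality for $\chi^2_\numobs$ that is sharp enough in the sub-exponential regime to deliver an exponential-in-$\numobs$ rate at the threshold $2\numobs$. The choice of the factor $2$ is precisely what makes this clean: it keeps the deviation on the order of the mean and so avoids the weaker, Gaussian-type rate that governs small deviations around the mean.
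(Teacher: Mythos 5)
Your proof is correct and follows essentially the same route as the paper, which disposes of this lemma in one line via a union bound over all $\numreg\pdim$ columns together with the $\chi^2$ upper-tail bound~\eqref{EqnMyChiUpper} from Appendix~\ref{AppLargeDev} applied to the scaled chi-squared variable $\|\Xmatf{\pind}{\jind}\|_2^2/\Covmatif{\pind}{\jind\jind}$ at the threshold $(1+t)^2 = 2$. The only caveat—shared by the paper itself—is that the precise exponential constant at threshold $2\numobs$ is $(1-\log 2)/2$ (Chernoff) or $(\sqrt{2}-1)^2/2$ (via~\eqref{EqnMyChiUpper}) rather than $1/2$, so the exponent $-\numobs/2$ should really carry a smaller absolute constant, which affects nothing qualitatively.
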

\noindent This claim follows immediately by union bound and
concentration results for $\chi^2$-variates; in particular, the
bound~\eqref{EqnMyChiUpper} in Appendix~\ref{AppLargeDev}.

Under the condition of Lemma~\ref{LemColControl}, each variable
$\Wnew{\pind}{\jind} \defn \frac{1}{\relaxn \numobs} \binprod{
\Xmatf{\pind}{\jind} }{(I-\proj_{\Xmatf{\pind}{\Joint}})
\wnoise{\pind}}$ is zero-Gaussian, with variance at most $\frac{2
\sigma^2}{\relaxn^2 \numobs}$.  Consequently, for any choice of signs
$\mysign \in \{-1,+1\}^\numreg$, the vector $\sum_{\pind=1}^\numreg
\mysign_\pind \Wnew{\pind}{\jind}$ is zero-mean Gaussian, with
variance at most $\frac{2 \sigma^2 \numreg}{\relaxn^2 \numobs}$.
Therefore, for any $t > 0$, we have
\begin{eqnarray*}
\mprob[\max_{\jind \in \Jointcom} \sum_{\pind=1}^\numreg
  |\Wnew{\pind}{\jind}| \geq t] & = & \mprob[\max_{\jind \in
    \Jointcom} \max_{\mysign \in \{-1,+1\}^\numreg}
  \sum_{\pind=1}^\numreg \mysign_\pind \Wnew{\pind}{\jind} \geq t] \\
& \leq & 2 \; \exp \big( -\frac{\relaxn^2 \numobs}{4 \sigma^2 \numreg}
t^2 + \numreg  + \log \pdim \big)
\end{eqnarray*}
Setting $t = \incopar/2$ yields that 
\begin{eqnarray*}
\mprob[\Mterm_1 \geq \incopar/2] & \leq & 2 \; \exp \big(
-\frac{\relaxn^2 \numobs}{16 \sigma^2 \numreg} \incopar^2 + \numreg + \log
\pdim \big).
\end{eqnarray*}

\begin{lemma}
\label{LemPopMutInco}
Suppose that the design covariance matrices $\Covmati{\pind}, \pind =
1, \ldots, \numreg$ satisfy the mutual incoherence
condition~\eqref{EqnMutIncoDet}.  Then we have
\begin{eqnarray}
\label{EqnMtermPrime}
\Mterm_2 & \leq & (1-\incopar) + \underbrace{\max_{\jind \in
\Jointcom} \sum_{\pind=1}^\numreg \big| \frac{1}{\numobs}
\binprod{\Yif{\pind}{\jind}} {\Xmatf{\pind}{\Joint}
(\inprod{\frac{1}{\numobs} \Xmatf{\pind}{\Joint}}
{\Xmatf{\pind}{\Joint}})^{-1} \dwitf{\pind}{\Joint}} \big |}, \\
& & \qquad \qquad \qquad \qquad \qquad \Mterm'_2 \nonumber
\end{eqnarray}
where each random vector $\Yif{\pind}{\jind} \in \real^{\numobs \times
1}$ has i.i.d. $N(0,1)$ entries, and is independent of
$\wnoise{\pind}$ and $\Xmatf{\pind}{\Joint}$.
\end{lemma}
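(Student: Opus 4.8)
The plan is to exploit the Gaussianity of the design to split each off-support column $\Xmatf{\pind}{\jind}$, $\jind \in \Jointcom$, into a part predictable from the support columns $\Xmatf{\pind}{\Joint}$ plus an independent residual. Since the rows of $\Xmat{\pind}$ are i.i.d.\ $N(0,\Covmati{\pind})$, standard Gaussian conditioning of $\Xmatf{\pind}{\jind}$ on $\Xmatf{\pind}{\Joint}$ gives
\[
\Xmatf{\pind}{\jind} \; = \; \Xmatf{\pind}{\Joint}(\Covmatif{\pind}{\Joint\Joint})^{-1}\Covmatif{\pind}{\Joint\jind} \; + \; E^\pind_\jind,
\]
where the residual $E^\pind_\jind$ has i.i.d.\ $N(0,\sigma^2_{\jind\mid\Joint})$ entries with conditional variance $\sigma^2_{\jind\mid\Joint} = \Covmatif{\pind}{\jind\jind} - \Covmatif{\pind}{\jind\Joint}(\Covmatif{\pind}{\Joint\Joint})^{-1}\Covmatif{\pind}{\Joint\jind} \leq 1$, and is independent of both $\Xmatf{\pind}{\Joint}$ and (being a function of the design) the observation noise $\wnoise{\pind}$.

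Substituting this decomposition into the summand defining $\Mterm_2$ and applying the triangle inequality inside the absolute value splits each term into a mean part and a residual part. The crucial step is that the mean part simplifies \emph{exactly}: writing $\widehat{\Sigma}^\pind = \frac{1}{\numobs}\inprod{\Xmatf{\pind}{\Joint}}{\Xmatf{\pind}{\Joint}}$, the contribution of $\Xmatf{\pind}{\Joint}(\Covmatif{\pind}{\Joint\Joint})^{-1}\Covmatif{\pind}{\Joint\jind}$ is
\[
\frac{1}{\numobs}\Covmatif{\pind}{\jind\Joint}(\Covmatif{\pind}{\Joint\Joint})^{-1}(\Xmatf{\pind}{\Joint})^T\Xmatf{\pind}{\Joint}(\widehat{\Sigma}^\pind)^{-1}\dwitf{\pind}{\Joint} \; = \; \Covmatif{\pind}{\jind\Joint}(\Covmatif{\pind}{\Joint\Joint})^{-1}\dwitf{\pind}{\Joint},
\]
since $(\Xmatf{\pind}{\Joint})^T\Xmatf{\pind}{\Joint} = \numobs\,\widehat{\Sigma}^\pind$ cancels against its own inverse. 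The random empirical covariance thus disappears, leaving a purely deterministic quantity controlled by the population covariances.

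Next I would bound this deterministic mean term via the population mutual incoherence condition~\eqref{EqnMutIncoRand}. By H\"older's inequality together with $\|\dwitf{\pind}{\Joint}\|_\infty \leq 1$ (Lemma~\ref{LemSubdiff}),
\[
\big|\Covmatif{\pind}{\jind\Joint}(\Covmatif{\pind}{\Joint\Joint})^{-1}\dwitf{\pind}{\Joint}\big| \; \leq \; \big\|\Covmatif{\pind}{\jind\Joint}(\Covmatif{\pind}{\Joint\Joint})^{-1}\big\|_1,
\]
so summing over $\pind$ and maximizing over $\jind \in \Jointcom$ yields the bound $1-\incopar$. For the residual part, conditional independence lets me factor $E^\pind_\jind = \sigma_{\jind\mid\Joint}\,\Yif{\pind}{\jind}$ with $\Yif{\pind}{\jind}$ standard Gaussian and independent of $\Xmatf{\pind}{\Joint}$ and $\wnoise{\pind}$; bounding $\sigma_{\jind\mid\Joint}\leq 1$ produces exactly the term $\Mterm'_2$. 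Combining the two pieces gives $\Mterm_2 \leq (1-\incopar) + \Mterm'_2$.

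The main obstacle is the algebraic cancellation in the mean part: one must recognize that \emph{conditioning} on $\Xmatf{\pind}{\Joint}$ (rather than integrating it out) produces a mean coefficient whose quadratic form against $\widehat{\Sigma}^\pind$ collapses the empirical covariance against its inverse, reducing a random expression to the deterministic population incoherence. The secondary point requiring care is correctly tracking that the residual is independent of both the support design and the noise, so that $\Yif{\pind}{\jind}$ may be treated as a fresh standard Gaussian in the subsequent concentration argument bounding $\Mterm'_2$.
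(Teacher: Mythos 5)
Your proof follows the paper's argument essentially step for step: both condition on $\Xmatf{\pind}{\Joint}$, decompose each off-support column via Gaussian conditioning into its mean $\Xmatf{\pind}{\Joint}(\Covmatif{\pind}{\Joint\Joint})^{-1}\Covmatif{\pind}{\Joint\jind}$ plus an independent residual, exploit the exact cancellation of the empirical covariance against its inverse in the mean term, and then bound that deterministic term by $1-\incopar$ using the population incoherence condition~\eqref{EqnMutIncoRand} together with $\|\dwitf{\pind}{\Joint}\|_\infty \leq 1$. Your explicit factorization $E^\pind_\jind = \sigma_{\jind\mid\Joint}\,\Yif{\pind}{\jind}$ with $\sigma_{\jind\mid\Joint}\leq 1$ is in fact a slightly more careful treatment of the residual variance than the paper, which writes $\Yif{\pind}{\Jointcom}\sim N(0,\Covmatif{\pind}{\Jointcom\mid\Joint})$ and implicitly absorbs the Schur-complement variance into the $N(0,1)$ claim, but the substance of the argument is identical.
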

\noindent See Appendix~\ref{AppLemPopMutInco} for the proof of this
claim.   \\

It remains to show that the random variable $\Mterm'_2$ defined in
equation~\eqref{EqnMtermPrime} is upper bounded by $\incopar/2$ with
high probability.  Conditioning on $\Xmatf{\pind}{\Joint}$ and
$\wnoise{\pind}$, the scalar random variable $\frac{1}{\numobs}
\binprod{\Yif{\pind}{\jind}} {\Xmatf{\pind}{\Joint}
(\inprod{\frac{1}{\numobs} \Xmatf{\pind}{\Joint}}
{\Xmatf{\pind}{\Joint}})^{-1} \dwitf{\pind}{\Joint}}$ is zero-mean
Gaussian, with variance upper bounded as
\begin{eqnarray*}
\frac{1}{\numobs} \binprod{\dwitf{\pind}{\Joint}}
{(\inprod{\frac{1}{\numobs} \Xmatf{\pind}{\Joint}}
{\Xmatf{\pind}{\Joint}})^{-1} \dwitf{\pind}{\Joint}} & \leq &
\frac{\|\dwitf{\pind}{\Joint}\|^2_2 }{\Cmin \; \numobs}.
\end{eqnarray*}
Recalling that $\sum_{\pind=1}^\numreg |\dwit{\pind}_\kind| = 1$, for
any choice of signs $\mysign \in \{-1,+1\}^\numreg$, the variable
\begin{equation*}
\sum_{\pind=1}^\numreg \mysign_\pind \frac{1}{\numobs}
\binprod{\Yif{\pind}{\jind}} {\Xmatf{\pind}{\Joint}
(\inprod{\frac{1}{\numobs} \Xmatf{\pind}{\Joint}}
{\Xmatf{\pind}{\Joint}})^{-1} \dwitf{\pind}{\Joint}}
\end{equation*}
is zero-mean Gaussian, with variance at most $\frac{\numreg \spindex
}{\Cmin \; \numobs}$.  Therefore, we have
\begin{eqnarray*}
\mprob[\Mterm_2' \geq \incopar/2] & \leq & \mprob \big[\max_{\jind \in
\Jointcom} \max_{\mysign \in \{-1,+1 \}^\numreg}
|\sum_{\pind=1}^\numreg \mysign_\pind \frac{1}{\numobs}
\binprod{\Yif{\pind}{\jind}} {\Xmatf{\pind}{\Joint}
(\inprod{\frac{1}{\numobs} \Xmatf{\pind}{\Joint}}
{\Xmatf{\pind}{\Joint}})^{-1} \dwitf{\pind}{\Joint}}| \geq \incopar/2
\big] \\
& \leq & 2 \exp( -\frac{\Cmin \numobs}{8 \numreg \spindex } 
\incopar^2 + \numreg + \log \pdim).
\end{eqnarray*}

This probability vanishes faster than $2 \exp \big \{ -\tautwo (
\numreg + \log \pdim) \big \} \rightarrow 0$, as long as
\begin{eqnarray*}
\numobs & > & \frac{8 \, \tautwo \; \numreg }{\Cmin \incopar^2} \,
\spindex \big ( \numreg + \log \pdim \big)
\end{eqnarray*}


\subsection{Establishing $\ell_\infty$ bounds}

We now turn to establishing the claimed
$\ell_\infty$-bound~\eqref{EqnEllinfDet} on the difference $\Best -
\Bstar$. As in the analogous portion of the proof of
Theorem~\ref{ThmDetDesign}, we use the decomposition
\begin{eqnarray*}
\max_{\jind \in \Joint} |\Uvarf{\pind}{\jind}| & \leq &
    \underbrace{\big \| \big(\frac{1}{\numobs}
    \inprod{\Xmatf{\pind}{\Joint}} {\Xmatf{\pind}{\Joint}} \big)^{-1}
    \frac{1}{\numobs} \inprod{\Xmatf{\pind}{\Joint}}{\wnoise{\pind}}
    \big \|_\infty} + \underbrace{ 
\big \| \big(
    \inprod{\frac{1}{\numobs}
    \Xmatf{\pind}{\Joint}}{\Xmatf{\pind}{\Joint}} \big )^{-1} \relaxn
    \dwitf{\pind}{\Joint} \big \|_\infty.}  \\
& & \quad \qquad \qquad \Uterma^\pind \qquad \qquad \qquad \qquad
\qquad \qquad \Utermb^\pind
\end{eqnarray*}
In the setting of random design matrices, a bit more work is required
to control these terms.

Beginning with the second term, by triangle inequality, we have
\begin{eqnarray*}
\Utermb^\pind & \leq & \big \| \big[\big( \inprod{\frac{1}{\numobs}
    \Xmatf{\pind}{\Joint}}{\Xmatf{\pind}{\Joint}} \big )^{-1} -
    (\Covmatif{\pind}{\Joint \Joint})^{-1} \big)\relaxn
    \dwitf{\pind}{\Joint} \big] \|_\infty + \big \|
    \big(\Covmatif{\pind}{\Joint \Joint})^{-1} \relaxn
    \dwitf{\pind}{\Joint} \big \|_\infty \\
& \leq & \matsnorm{\big[\big( \inprod{\frac{1}{\numobs}
    \Xmatf{\pind}{\Joint}}{\Xmatf{\pind}{\Joint}} \big )^{-1} -
    (\Covmatif{\pind}{\Joint \Joint})^{-1} \big)}{2} \relaxn
    \sqrt{\spindex} + \Dmax \relaxn
\end{eqnarray*}
where we have used the facts that $\|\dwitf{\pind}{\Joint}\|_2 \leq
\sqrt{\spindex}$, since $\dwitf{\pind}{\Joint}$ belongs to the
sub-differential of the block $\ell_1/\ell_\infty$ norm (see
Lemma~\ref{LemSubdiff}) so that $\sum_{\pind=1}^r |
\dwitf{\pind}{\kind} | \leq 1$ for all $\kind \in \Joint$.  By,
concentration bounds for eigenvalues of Gaussian random matrices (see
equation~\eqref{genmatnorm:2} in Appendix~\ref{AppLargeDev}), we
conclude that
\begin{eqnarray*}
\Utermb^\pind & \leq & 4 \relaxn \sqrt{\spindex} \;
\sqrt{\frac{\spindex}{\numobs}} + \Dmax \relaxn \; = \;
\relaxn \, \big[\frac{4 \spindex}{\sqrt{\numobs}} + \Dmax
\big].
\end{eqnarray*}

Now consider the first term $\Uterma^\pind$: if we condition on
$\Xmatf{\pind}{\Joint}$, then the $|\Joint|$-dimensional random vector
$Y \defn \big(\binprod{\frac{1}{\numobs} \Xmatf{\pind}{\Joint}}{
\Xmatf{\pind}{\Joint}} \big)^{-1} \frac{1}{\numobs}
\inprod{\Xmatf{\pind}{\Joint}}{\wnoise{\pind}}$ is zero-mean Gaussian,
with covariance $\frac{1}{\numobs} \big(\binprod{\frac{1}{\numobs}
\Xmatf{\pind}{\Joint}}{\Xmatf{\pind}{\Joint}} \big)^{-1}$.  By
concentration bounds for eigenvalues of Gaussian random matrices (see
equation~\eqref{genmatnorm:2} in Appendix~\ref{AppLargeDev}), we have

\begin{eqnarray*}
\frac{1}{\numobs} \matsnorm{\big(\binprod{\frac{1}{\numobs}
\Xmatf{\pind}{\Joint}}{\Xmatf{\pind}{\Joint}} \big)^{-1}}{2} & \leq &
\frac{1}{\numobs} \Big \{ \matsnorm{
\big(\binprod{\frac{1}{\numobs}
\Xmatf{\pind}{\Joint}}{\Xmatf{\pind}{\Joint}} \big)^{-1} -
(\Covmatif{\pind}{\Joint \Joint})^{-1}}{2} +
\matsnorm{(\Covmatif{\pind}{\Joint \Joint})^{-1}}{2} \Big \}\\
& \leq & \frac{4}{\Cmin \numobs} \, \sqrt{\frac{\numreg
\spindex}{\numobs}} + \frac{1}{\Cmin \numobs} \\
& \leq & \frac{5}{\Cmin \numobs},
\end{eqnarray*}
since $\numreg \spindex/\numobs \leq 1$.  Therefore, we have shown
that the variance of each element of $Y$ is upper bounded by $5/(\Cmin
\numobs)$, so that we can apply standard Gaussian tail bounds.  By
applying the union bound twice, first over $\jind \in \Joint$, and
then over $\pind \in \{1, 2, \ldots, \numreg \}$, we obtain
\begin{eqnarray*}
\mprob[ \max_{\pind = 1, \ldots, \numreg} \Uterma^\pind \geq t] & \leq
& 2 \exp(-t^2 \numobs \Cmin/(50) + \log |\Joint| + \log \numreg).
\end{eqnarray*}
Setting $t = \keypar\; \sqrt{\frac{100 \log (\numreg \spindex)}{\Cmin
\numobs}}$ yields that
\begin{eqnarray*}
\mprob[ \max_{\pind = 1, \ldots, \numreg} \Uterma^\pind \geq t] & \leq
& 2 \exp \big \{ -2 \keypar^2 \log (\numreg \spindex) + \log (\numreg
\spindex) + \log \numreg \big \} \\
& \leq & 2 \exp \big \{ -2 (\keypar^2 -1)\log (\numreg \spindex) \big \},
\end{eqnarray*}
where we have used the fact that $|\Joint| \leq \numreg \spindex$.
Combining the pieces, we conclude that
\begin{eqnarray*}
\max_{\jind \in \Joint} |\Uvarf{\pind}{\jind}| & \leq & \keypar\;
\sqrt{\frac{100 \log (\numreg \spindex)}{\Cmin \numobs}} + \relaxn \,
\big[\frac{4 \spindex}{\sqrt{\numobs}} + \Dmax \big],
\end{eqnarray*}
with probability greater than 
\begin{equation*}
1- 2 \exp \big \{ -2 (\keypar^2 -1)\log (\numreg \spindex) \big \} -
c_1 \exp(-c_2 \numobs), 
\end{equation*}
as claimed.


\section{Proof of Theorem~\ref{ThmPhase}}

We now turn to the proof of the phase transition predicted by
Theorem~\ref{ThmPhase}, which applies to random design matrices
$\Xmat{1}$ and $\Xmat{2}$ drawn from the standard Gaussian ensemble.
This proof requires significantly more technical work than the
preceding two proofs, since we need to control all the constants
exactly, and to establish both necessary and sufficient conditions on
the sample size. 

\subsection{Proof of Theorem~\ref{ThmPhase}(a)}

We begin with the achievability result.  Our proof parallels that of
Theorems~\ref{ThmDetDesign} and~\ref{ThmGauss}, in that we first
establish strict dual feasibility, and then turn to proving
$\ell_\infty$ bounds and exact support recovery.

\subsubsection{Establishing strict dual feasibility}

Recalling that $\Vvar_\jind = \sum_{\pind=1}^2 |\dwitf{\pind}{\jind}|$,
we have
\begin{eqnarray*}
  \max_{\jind \in \Jointcom} |\Vvar_\jind| \, \leq \, \Mterm_1 \, + \,
  \Mterm_2,
\end{eqnarray*}
where the random variables $\Mterm_1$ and $\Mterm_2$ were defined at
the start of Section~\ref{SubSecGaussRand}.  In order to prove that
$\max_{\jind \in \Jointcom} |\Vvar_\jind| < 1$ with high probability
for the values of $\numobs$, $\spindex$, and $\pdim$, we will first
establish that $\Mterm_1 < \epsilon/2$ and $\Mterm_2 < 1 - \epsilon$
for an appropriately chosen value of $\epsilon$.

By the results from the previous section, we have $M_1 < \epsilon/2$
with probability
\begin{eqnarray*}
\mprob[\max_{\jind \in \Jointcom} \sum_{\pind=1}^2
  |\Wnew{\pind}{\jind}| \geq \epsilon/2]
& \leq & 2 \; \exp \big( -\frac{\relaxn^2 \numobs \epsilon^2}{32 \sigma^2}
 + 2  + \log \pdim \big)
\end{eqnarray*}

Recall that
\begin{equation*}
  \Mterm_2 = \max_{\jind \in \Jointcom} \sum_{\pind=1}^2 \big
  |\frac{1}{\numobs} \binprod{\Xmatf{\pind}{\jind}}
  {\Xmatf{\pind}{\Joint} (\frac{1}{\numobs}
    \inprod{\Xmatf{\pind}{\Joint}}{\Xmatf{\pind}{\Joint}})^{-1}
    \dwitf{\pind}{\Joint}} \big|,
\end{equation*}
and that $\Xmatf{\pind}{\Jointcom}$ is independent of
$\Xmatf{\pind}{\Joint}$ and $\wnoise{\pind}$. We will show that
$\Mterm_2 < 1 - \epsilon$ with high probability by using results on
Gaussian extrema.  Conditioning on $(\mata_\Joint, \noisea,
\duala_\Joint)$, the random variable $\Yif{\pind}{\jind} =
\frac{1}{\numobs} \binprod{\Xmatf{\pind}{\jind}}
{\Xmatf{\pind}{\Joint} (\frac{1}{\numobs}
\inprod{\Xmatf{\pind}{\Joint}}{\Xmatf{\pind}{\Joint}})^{-1}
\dwitf{\pind}{\Joint}}$ is zero-mean with variance upper-bounded as
  \begin{eqnarray*}
    \frac{1}{\numobs} \binprod{\dwitf{\pind}{\Joint}}
	 {(\inprod{\frac{1}{\numobs} \Xmatf{\pind}{\Joint}}
	 {\Xmatf{\pind}{\Joint}})^{-1} \dwitf{\pind}{\Joint}} & \leq &
	 \matsnorm{(\inprod{\frac{1}{\numobs} \Xmatf{\pind}{\Joint}}
	 {\Xmatf{\pind}{\Joint}})^{-1}}{2}^2
	 \frac{\|\dwitf{\pind}{\Joint}\|^2_2 }{\numobs},
  \end{eqnarray*}
Under the given conditioning, the random variables $\Yif{1}{\jind}$
and $\Yif{2}{\jind}$ are independent and for any sign vector $\mysign
\in \{-1,+1\}^2$, the random variable $\sum_{\pind=1}^2 \mysign_\pind
\Yif{\pind}{\jind}$ is Gaussian, zero-mean with variance upper bounded
as
\begin{eqnarray*}
    \sum_{\pind=1}^2 \matsnorm{(\inprod{\frac{1}{\numobs}
	   \Xmatf{\pind}{\Joint}} {\Xmatf{\pind}{\Joint}})^{-1}}{2}^2
	   \frac{\|\dwitf{\pind}{\Joint}\|^2_2 }{\numobs}
  \end{eqnarray*}
  
By Lemma~\ref{LemRandMat}, $\matsnorm{(\inprod{\frac{1}{\numobs}
\Xmatf{\pind}{\Joint}} {\Xmatf{\pind}{\Joint}})^{-1}}{2}^2 \leq (1 +
\delta)$ with probability at least $1 - c_1 \exp{-c_2 n}$ for
sufficiently large $s$ and $n$ under the given scaling for each
$\pind$. Hence, $\sum_{\pind=1}^2 \mysign_\pind \Yif{\pind}{\jind}$ is
normal, zero-mean, with variance upper bounded as
\begin{eqnarray*}
  \frac{(1 + \delta)}{\numobs} \sum_{\pind=1}^2 \|\dwitf{\pind}{\Joint}\|^2_2
\end{eqnarray*}

Recall that $\Dwitf{\Joint}$ was obtained from Step (B) of the Prima-dual
witness construction. The next lemma provides control
over $\sum_{\pind=1}^2 \|\dwitf{\pind}{\Joint}\|^2_2$.
\begin{lemma}
\label{LemVarBound}
Under the assumptions of Theorem~\ref{ThmPhase} and Corollary~\ref{gapcor}, if $\relaxn^2 \numobs \rightarrow +\infty$ and $\spindex/\numobs
\rightarrow 0$, then $\|\duala_\Joint\|^2_2$ is concentrated: for all
$\delta > 0$, we have that for sufficiently large $\spindex$ and $\numobs$
\begin{subequations}
\begin{eqnarray}
\label{EqnVarLower}
\mprob \big[ \|\duala_\Joint\|^2_2 + \|\dualb_\Joint\|^2_2 \leq
  (1-\delta) \frac{\spindex}{2} \big \{ (4 - 3 \overlap) +
  \frac{1}{\relaxn^2 \spindex} \norm{\bdiff}_2^2 \big \}\big] &
  \rightarrow & 0, \qquad \mbox{and} \\
\label{EqnVarUpper}
\mprob \big[ \|\duala_\Joint\|^2_2 + \|\dualb_\Joint\|^2_2 \geq
  (1+\delta) \frac{s}{2} \big \{(4 - 3 \overlap) + \frac{1}{\relaxn^2
  \spindex} \norm{\bdiff}_2^2 \big \} \big] & \leq & c_1 \exp(
  -c_2 \numobs),
\end{eqnarray}
\end{subequations}
\end{lemma}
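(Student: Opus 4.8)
The plan is to compute $\|\duala_\Joint\|_2^2 + \|\dualb_\Joint\|_2^2$ directly from the structure of the subgradient matrix $\Dwitf{\Joint}$ produced by Step~(B), reducing it to an essentially deterministic quantity, and then to show that the random fluctuations around that quantity are negligible because $\relaxn^2 \numobs \to +\infty$ and $\spindex/\numobs \to 0$. First I would partition $\Joint$ (of cardinality $(2-\overlap)\spindex$) into the two \emph{private} blocks $\Sset(\bstar{1})\setminus \Sset(\bstar{2})$ and $\Sset(\bstar{2})\setminus \Sset(\bstar{1})$, each of size $(1-\overlap)\spindex$, and the \emph{shared} block $C \defn \Sset(\bstar{1}) \cap \Sset(\bstar{2})$ of size $\overlap\spindex$. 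Using the elementwise $\ell_\infty$ control on $\Nuvar{1}, \Nuvar{2}$ from the achievability argument together with the lower bound $\mybetamin$, I would argue that on each private row exactly one column is strictly largest in magnitude, so $\Mset_\jind$ is a singleton and Lemma~\ref{LemSubdiff} forces the corresponding dual entries to be $\pm 1$ in the active column and $0$ in the other. Hence each private row contributes exactly $1$, giving a deterministic contribution of $2(1-\overlap)\spindex$ on this high-probability event.

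The heart of the argument is the shared block $C$. Here I would invoke the stationarity condition of the restricted program~\eqref{EqnRestricted}, which (rearranging~\eqref{EqnDefnUvar}) gives $\relaxn \dwitf{\pind}{\jind} = \big[\frac{1}{\numobs}\inprod{\Xmatf{\pind}{\Joint}}{\wnoise{\pind}} - \frac{1}{\numobs}\inprod{\Xmatf{\pind}{\Joint}}{\Xmatf{\pind}{\Joint}}\Nuvar{\pind}\big]_\jind$ for each $\jind \in C$. At a shared row where the two magnitudes tie, Lemma~\ref{LemSubdiff} gives $\dwitf{1}{\jind} = t_\jind \sign(\bstar{1}_\jind)$ and $\dwitf{2}{\jind} = (1-t_\jind)\sign(\bstar{2}_\jind)$; substituting into the two stationarity equations, using $\frac{1}{\numobs}\inprod{\Xmatf{\pind}{\Joint}}{\Xmatf{\pind}{\Joint}} \approx I$ (Lemma~\ref{LemRandMat}, with $\Cmin = 1 = \Dmax$), and subtracting the two equations yields, to leading order, $\relaxn(2t_\jind - 1) = \big| \, |\bstar{1}_\jind| - |\bstar{2}_\jind| \, \big|$. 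Feasibility $t_\jind \in [0,1]$ then produces the truncation at $\relaxn$: the weight difference equals $\tau_\jind/\relaxn$ with $\tau_\jind = \min\{\relaxn, | \, |\bstar{1}_\jind| - |\bstar{2}_\jind| \, |\}$, which are exactly the entries of $\Trun(\Bstar)$ and hence of $\bdiff$. Consequently $(\dwitf{1}{\jind})^2 + (\dwitf{2}{\jind})^2 = t_\jind^2 + (1-t_\jind)^2 = \tfrac12 + \tfrac12(2t_\jind-1)^2 = \tfrac12 + \tfrac{\tau_\jind^2}{2\relaxn^2}$, and summing over $C$ gives $\tfrac{\overlap\spindex}{2} + \tfrac{1}{2\relaxn^2}\|\bdiff\|_2^2$. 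Adding the private contribution reproduces $\tfrac{\spindex}{2}\big\{(4-3\overlap) + \tfrac{1}{\relaxn^2 \spindex}\|\bdiff\|_2^2\big\}$, which is the target value.

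Finally I would upgrade this leading-order identity to the two-sided concentration. The discarded corrections are the noise term $[\frac1\numobs\inprod{\Xmatf{\pind}{\Joint}}{\wnoise{\pind}}]_\jind$ and the deviation of $\frac1\numobs\inprod{\Xmatf{\pind}{\Joint}}{\Xmatf{\pind}{\Joint}}$ from the identity; each perturbs a dual weight by $O\big(\relaxn^{-1}(\sigma/\sqrt\numobs + \sqrt{\spindex/\numobs})\big)$, so after squaring and summing over the $O(\spindex)$ shared rows the aggregate correction is of order $\frac{\spindex}{\relaxn^2\numobs}$, which is $o(\spindex)$ precisely because $\relaxn^2\numobs\to\infty$ and $\spindex/\numobs\to 0$; cross terms are absorbed by Cauchy--Schwarz. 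The upper deviation~\eqref{EqnVarUpper} would follow from exponential concentration of the $\chi^2$ noise norms and of the extreme eigenvalues of the Gaussian Gram matrices (Lemma~\ref{LemRandMat}), giving the $c_1\exp(-c_2\numobs)$ tail, while the lower deviation~\eqref{EqnVarLower} follows from the matching bound together with the probability-tending-to-one event that the sign/dominance pattern on the private rows holds. The main obstacle is the shared-support stationarity analysis: making the tie characterization $2t_\jind - 1 = \tau_\jind/\relaxn$ rigorous under random design and noise, verifying that it is indeed a tie (rather than strict dominance) that the restricted program selects whenever the gap falls below $\relaxn$, and controlling the weights $t_\jind$ uniformly over $C$.
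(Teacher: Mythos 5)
Your proposal is correct in substance and arrives at exactly the identity the paper proves, but executes the shared-block computation by a different (perturbative) route. The paper does not linearize the KKT conditions row by row; it eliminates the private coordinates \emph{exactly}, producing the closed-form expression of Lemma~\ref{lemmaboth} for $\Signmat^1 \dwitf{1}{\Both}$ in terms of the Schur-complement matrices $\Smat^\pind = \frac{1}{\numobs}\Signmat^\pind \binprod{\Xmatf{\pind}{\Both}}{(I - \proj_{\Xmatf{\pind}{\Sing}})\Xmatf{\pind}{\Both}}\Signmat^\pind$ and the conditionally Gaussian vectors $\sahand^\pind$. It then shows $\matsnorm{\Smat^\pind - I}{2}$, $\matsnorm{[(\Smat^1)^{-1}+(\Smat^2)^{-1}]^{-1} - I/2}{2}$ and $\matsnorm{\Smat^1[\Smat^1+\Smat^2]^{-1} - I/2}{2}$ are all $\myorder$, reads off $\dwitf{1}{\Both} \approx \frac{\vecone}{2} - \frac{\bdiff}{2\relaxn}$ up to a remainder $\are$ with $\norm{\are}_2 = o(\sqrt{\spindex})$, and concludes via $\norm{\frac{\vecone}{2} - \frac{\bdiff}{2\relaxn}}_2^2 + \norm{\frac{\vecone}{2} + \frac{\bdiff}{2\relaxn}}_2^2 = \frac{\overlap \spindex}{2} + \frac{\norm{\bdiff}_2^2}{2\relaxn^2}$ --- which is your per-row computation $t_\jind^2 + (1-t_\jind)^2 = \tfrac12 + \tfrac{\tau_\jind^2}{2\relaxn^2}$ in aggregate form. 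Your subtraction of the two stationarity equations at a tied row, using the tie $|\best{1}_\jind| = |\best{2}_\jind|$ to trade the $\Uvar$-terms for the gap, is a legitimate linearized version of that exact elimination; and your explicit derivation of the truncation from the feasibility constraint $t_\jind \in [0,1]$ is arguably cleaner than the paper, which simply \emph{assumes} ties on all of $\Both$ (the hypothesis of Lemma~\ref{lemmaboth}) and reconciles with $\Trun(\Bstar)$ only at the level of Corollary~\ref{gapcor}. You also correctly isolate the genuine difficulty --- certifying that the restricted program produces ties precisely when gaps fall below the $\relaxn$ scale --- which the paper addresses (in the regime $\mybetagap = o(\relaxn)$) through Lemma~\ref{LemBound} rather than leaving open.

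One bookkeeping slip is worth fixing. Your per-weight perturbation $O\big(\relaxn^{-1}(\sigma/\sqrt{\numobs} + \sqrt{\spindex/\numobs})\big)$ attaches $\relaxn^{-1}$ to the Gram-deviation term; taken literally, squaring and summing over $O(\spindex)$ rows gives an aggregate of order $\spindex^2/(\relaxn^2 \numobs)$, which the hypotheses $\relaxn^2 \numobs \to \infty$ and $\spindex/\numobs \to 0$ do \emph{not} control (take $\spindex/\numobs = 1/\log \numobs$ and $\relaxn^2 \numobs = \log\log \numobs$). The resolution, visible in the paper's bound on $\norm{\are}_2$, is that the Gram deviation multiplies quantities that are already of size $O(\relaxn)$ --- namely $\relaxn \dwitf{\pind}{\Joint}$ and the error $\Uvar$ --- so the $\relaxn$ factors cancel and the per-row contribution is $O(\sqrt{\spindex/\numobs})$, giving an aggregate $O(\spindex^2/\numobs) = o(\spindex)$; only the noise contribution carries $\relaxn^{-1}$ and yields the $\spindex/(\relaxn^2 \numobs)$ term you actually state. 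With that correction your error analysis closes, and your probability accounting (exponential $\chi^2$ and eigenvalue tails for~\eqref{EqnVarUpper}, the whp tie/dominance event for~\eqref{EqnVarLower}) matches the asymmetric conclusion of the lemma.
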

\noindent See Appendix~\ref{AppLemVarBound} for the proof of this
claim.

Now, by applying the union bound and using Gaussian tail bounds, we
obtain that the probability $\Prob[\Mterm_2 \geq 1 - \epsilon]$ is
upper bounded by
\begin{equation*}
c_1 \exp(-c_2 n) \, + \, 4 \exp \big ( -(1-\epsilon)^2 \numobs
  /[(1+\delta) \spindex \big \{(4 - 3 \overlap) + \frac{1}{\relaxn^2
  \spindex} \norm{\bdiff}_2^2 \big \}] + \log(\pdim - (2-\overlap)
  \spindex) \big),
\end{equation*}
which goes to $0$ as $\numobs \to \infty$ under the condition
\begin{eqnarray*}
\numobs & > & [(1+\delta) \spindex \big \{(4 - 3 \overlap) +
\frac{1}{\relaxn^2 \spindex} \norm{\bdiff}_2^2 \big
\}]/(1-\epsilon)^2 \log(\pdim - (2-\overlap) \spindex).
\end{eqnarray*}


\subsection{Proof of Theorem~\ref{ThmPhase}(b)}

We now turn to the proof of the converse claim in
Theorem~\ref{ThmPhase}.  We establish the claim by contradiction.  We
show that if a solution $\Best$ exists such that $\Best_{\Joint^c} =
0$, then under the stated upper bound on the sample size $\numobs$,
there exists some $\epsilon > 0$ such that $\Prob[\max \limits_{k \in
\Jointcom} ( \abs{\duala_k} + \abs{\dualb_k}) > 1 + \epsilon]$
converges to one.  From the definition~\eqref{EqnDefnDualwitOff}, we
see that conditioned on $(\mata_\Joint, \noisea, \duala_\Joint)$, the
variables $\{\duala_k, k \in \Jointcom \} \}$ are i.i.d. zero-mean
Gaussians, with variance given by
\begin{equation*}
\myvar(\duala_k) \define \norm{\frac{1}{\relaxn \numobs}
\Proj{\Joint^\perp} \noisea - \frac{1}{\numobs} \mata_\Joint
(\frac{1}{\numobs} \mata_\Joint^T \mata_\Joint)^{-1}
\duala_\Joint}_2^2.
\end{equation*}
By orthogonality, we have $\myvar(\duala_k) = \norm{\frac{1}{\relaxn
\numobs} \Proj{\Joint^\perp} \noisea}_2^2 + \norm{\frac{1}{\numobs}
\mata_\Joint (\frac{1}{\numobs} \mata_\Joint^T \mata_\Joint)^{-1}
\duala_\Joint}_2^2$, so that (using the idempotency of projection
operators), we have
\begin{eqnarray}
\label{EqnDefnRandVar}
\myvar(\duala_k) & \geq & \randvar \; \defn \; \max \left \{
\frac{1}{\relaxn^2 \numobs} \; \frac{\|\Proj{\Joint^\perp}
\noisea\|_2^2}{\numobs}, \; \lammin((\frac{1}{\numobs}\mata_\Joint^T
\mata_\Joint)^{-1}) \; \frac{\|\duala_\Joint \|_2^2}{\numobs} \right
\}.
\end{eqnarray}
Note that $\randvar = \randvar(\mata_\Joint, \noisea, \duala_\Joint)$
is a scalar random variable, but fixed under the conditioning.
Turning to the variables $\{\dualb_k, k \in \Jointcom\}$, a similar
argument shows that have $\myvar(\dualb_k) \geq \randvartil$, where
$\randvartil = \randvartil(\matb_\Joint, \noiseb, \dualb_\Joint)$ is
the analogous random variable.

For $\jind \in \Jointcom$, let $\dwitf{1}{\jind} \sim N(0,\randvar)$ and
$\dwitf{2}{\jind} \sim N(0, \randvartil)$.  We then have
\begin{eqnarray*}
  \Prob[\max_{\jind \in \Jointcom} (\abs{\dwitf{1}{\jind}} +
    \abs{\dwitf{2}{\jind}}) > (1 + \epsilon)] & \stackrel{(a)}{\geq} &
    \Prob [\max_{\jind \in \Jointcom} \abs{\dwitf{1}{\jind}} +
    \abs{\dwitf{2}{\jind}} > 1 + \epsilon] \\
  & \stackrel{}{\geq} & \Prob[\max_{\jind \in \Jointcom}
    (\dwitf{1}{\jind} + \dwitf{2}{\jind} ) > 1+\epsilon] \\
  & \stackrel{(b)}{=} & \Prob [\max_{\jind \in \Jointcom} Z_\jind > 1 +
    \epsilon],
\end{eqnarray*}
where $Z_\kind \sim N(0, \randvar + \randvartil)$.  Here inequality
(a) follows because $\randvar$ and $\randvartil$ are lower bounds on
the variances of $\{\dwitf{1}{\jind}, \jind \in \Jointcom\}$ and
$\{\dwitf{2}{\jind}, \jind \in \Jointcom\}$ respectively, and equality
(b) follows since $\dwitf{1}{\kind}$ and $\dwitf{2}{\kind}$ are
independent zero-mean Gaussians with variances $\randvar$ and
$\randvartil$, respectively.

To simplify notation, let $N = |\Jointcom| = \pdim -
(2-\overlap)\spindex$.  By standard results for Gaussian
maxima~\cite{LedTal91}, for any $\delta > 0$, there exists an integer
$N(\delta)$ such that for all $N \geq N(\delta)$,
\begin{eqnarray*}
\Exs[\max_{j \in \Jointcom} Z_j] & \geq & (1-\delta) \sqrt{2 (\randvar
+ \randvartil) \log N}.
\end{eqnarray*}
Moreover, the maximum function is Lipschitz, so that by Gaussian
concentration for Lipschitz functions~\cite{LedTal91,Ledoux01}, for
any $\eta > 0$, we have
\begin{eqnarray*}
\Prob \big[ \max_{j \in \Jointcom} Z_j \leq \Exs[\max_{j \in
\Jointcom} Z_j] -\eta \big] & \leq & \exp \big(-\frac{\eta^2}{2
(\randvar + \randvartil)} \big).
\end{eqnarray*}
Combining these two statements yields that for all $N \geq N(\delta)$,
we have
\begin{eqnarray}
\label{EqnCombine}
  \Prob \big[\max_{j \in \Jointcom} Z_j \leq (1-\delta) \sqrt{ 2
(\randvar + \randvartil) \log N} - \eta \big] & \leq & \exp
\big(-\frac{\eta^2}{2 (\randvar + \randvartil)} \big).
\end{eqnarray}
It remains to show that there exists some $\epsilon > 0$ such that
\mbox{$\mprob[\max_{k \in \Jointcom} Z_k \leq 1 +\epsilon]$} converges
to zero.  \\

\noindent {\bf{Case 1:}} First suppose that $\relaxn^2 \numobs =
\order(1)$.  In this case, we have $\randvar = \Omega \big(
\frac{\|\Proj{\Joint^\perp} \noisea\|_2^2}{\numobs} \big)$. With
probability greater than $1 - c_1 \exp(-c_2 \numobs)$, this quantity
is lower bounded by a constant, using concentration for
$\chi^2$-variates.  In this case, $\sqrt{2 (\randvar + \randvartil)
\log N} - \eta \rightarrow +\infty$ w.h.p., so that the result follows
trivially. \\

\noindent {\bf{Case 2:}} Otherwise, we must have $\relaxn^2 \numobs
\rightarrow +\infty$.  Under this condition, we now establish a lower
bound on $\randvar$ that holds with high probability; it will be seen
that a similar lower bound holds for $\randvartil$.  We begin by
noting the lower bound $\randvar \geq
\frac{\|\duala_\Joint\|_2^2}{\numobs}
\lammin((\frac{1}{\numobs}\mata_\Joint^T \mata_\Joint)^{-1})$.  To
control the minimum eigenvalue, define the event
\begin{equation}
\Tail(\mata_\Joint) \define \big \{ \mata_\Joint \; \mid \;
\lammin((\frac{1}{\numobs}\mata_\Joint^T \mata_\Joint)^{-1}) \geq (1 +
\sqrt{\spindex/\numobs})^{-2} \big \}.
\end{equation}
By standard random matrix concentration arguments (see
Appendix~\ref{AppLargeDev}), for some fixed $c > 0$, we are guaranteed
that \mbox{$\Prob[\Tail^c(\mata_\Joint)] \leq 2 \exp(-c \numobs)$.}
Consequently, conditioned on $\Tail(\mata_\Joint)$, we have
\begin{equation}
\label{EqnFirstLower}
\randvar + \randvartil \; \geq \; \frac{\|\duala_\Joint\|_2^2 + \| \dualb_\Joint \|_2^2}{\numobs} \; (1 +
\sqrt{\spindex/\numobs})^{-2}.
\end{equation}

From Lemma~\ref{LemVarBound}, we note that if $\spindex/\numobs =
o(1)$, then for any $\delta > 0$, we have the lower bound
\begin{equation}
\label{EqnSecondLower}
\randvar + \randvartil \; \geq \; (1-\delta) \frac{\spindex}{2 \numobs} \big
\{ (4 - 3 \overlap) + (\Gaplim(\Bstar,\relaxn))^2 \big \}(1-o(1)).
\end{equation}

\noindent The following result is the final step in the proof of
Theorem~\ref{ThmPhase}(b).
\begin{lemma}
\label{LemCaseOne}
Suppose that $\relaxn^2 \numobs \rightarrow +\infty$.  Under this
condition:
\begin{enumerate}
\item[(a)] If $\frac{\spindex}{\numobs} = \Omega(1)$, then $\Prob
[\max \limits_{k\in \Jointcom} Z_k \leq 2] \rightarrow 0$.
\item[(b)] If $\frac{\spindex}{\numobs} \to 0$, then there exists some
$\epsilon > 0$ such that $\Prob [\max \limits_{k \in \Jointcom} Z_k
\leq 1+\epsilon] \rightarrow 0$.
\end{enumerate}
\end{lemma}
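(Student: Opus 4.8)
The plan is to argue conditionally on the $\sigma$-field generated by $(\mata_\Joint, \noisea, \duala_\Joint, \matb_\Joint, \noiseb, \dualb_\Joint)$, under which the variables $\{Z_k, k \in \Jointcom\}$ are i.i.d.\ $N(0, \randvar + \randvartil)$ with the total variance $\randvar + \randvartil$ fixed. The backbone of both parts is the Gaussian-maxima lower bound~\eqref{EqnCombine}: once I exhibit a high-probability lower bound on $\randvar + \randvartil$, I substitute it into~\eqref{EqnCombine} with a suitably small slack $\eta$ to push $\max_{k \in \Jointcom} Z_k$ above the stated threshold. Concretely, I would introduce a good event $\cg$ on which the variance lower bounds hold --- namely the intersection of $\Tail(\mata_\Joint)$, its analogue for $\matb_\Joint$, and the event from~\eqref{EqnVarLower} of Lemma~\ref{LemVarBound} --- which satisfies $\mprob[\cg] \to 1$. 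Then I would split $\mprob[\max_k Z_k \leq \tau] \leq \mprob[\cg^c] + \mprob[\max_k Z_k \leq \tau \mid \cg]$ and show the conditional term vanishes.

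For part (a), in the regime $\spindex/\numobs = \Omega(1)$, the lower bound~\eqref{EqnSecondLower} (equivalently,~\eqref{EqnFirstLower} combined with Lemma~\ref{LemVarBound}) gives $\randvar + \randvartil \geq c$ for some constant $c > 0$ on $\cg$. Since $N = |\Jointcom| = \pdim - (2-\overlap)\spindex \to \infty$, the quantity $\sqrt{2(\randvar + \randvartil)\log N}$ diverges. Taking $\eta$ fixed and $\delta$ small in~\eqref{EqnCombine} then yields $\mprob[\max_k Z_k \leq 2 \mid \cg] \to 0$, which is the claim; here a crude threshold ($2$, rather than the sharp $1+\epsilon$) suffices precisely because the variance no longer shrinks with the sample size.

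The delicate part is (b), where I must track the constant exactly. Using~\eqref{EqnSecondLower} with $\Gaplim = 0$ for the vanishing-gap setting of Theorem~\ref{ThmPhase} (and retaining the $\Gaplim$ term when invoking the lemma for Corollary~\ref{gapcor}), on $\cg$ I have, for any $\delta > 0$, $\randvar + \randvartil \geq (1-\delta)\frac{(4-3\overlap)\,\spindex}{2\numobs}(1-o(1))$. Substituting into the threshold gives
\begin{equation*}
2(\randvar + \randvartil)\log N \;\geq\; (1-\delta)(1-o(1)) \, \frac{(4-3\overlap)\,\spindex \, \log(\pdim - (2-\overlap)\spindex)}{\numobs} \;=\; \frac{(1-\delta)(1-o(1))}{\orpar},
\end{equation*}
where the crucial bookkeeping is that $\log N$ matches exactly the logarithmic factor $\log(\pdim - (2-\overlap)\spindex)$ in the definition~\eqref{EqnNewOrpar} of $\orpar$, so that the variance scaling and the cardinality of $\Jointcom$ conspire to produce precisely $1/\orpar$. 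Under the failure hypothesis $\orpar < 1 - \delpar$, this lower bound is at least $(1-\delta)(1-o(1))/(1-\delpar)$, which strictly exceeds $1$ once $\delta$ is small; hence $\sqrt{2(\randvar + \randvartil)\log N} \geq 1 + \epsilon_0$ for some fixed $\epsilon_0 > 0$. Applying~\eqref{EqnCombine} with $\eta = \epsilon_0/2$ then forces $\max_k Z_k \geq (1-\delta)(1+\epsilon_0) - \eta \geq 1 + \epsilon$ with conditional probability tending to one, for a suitable $\epsilon > 0$.

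I expect the main obstacle to be exactly this constant-tracking in part (b): one must verify that the $(1\pm\delta)$ and $(1-o(1))$ factors arising from Lemma~\ref{LemVarBound}, from the minimum-eigenvalue event $\Tail$, and from the Gaussian-maxima bound can all be absorbed while leaving a strict margin above $1$, and that the logarithmic factor $\log|\Jointcom|$ is identified with $\log(\pdim - (2-\overlap)\spindex)$ rather than a cruder $\log\pdim$. The subtlety is that the effective threshold $1/\sqrt{\orpar}$ beats $1$ only by a margin governed by $\delpar$, so the slack parameters $\delta$ and $\eta$ must be chosen after $\delpar$ and small relative to it.
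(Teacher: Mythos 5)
Your route is the same as the paper's: condition on $(\mata_\Joint, \noisea, \duala_\Joint)$ and its analogue, lower-bound $\randvar + \randvartil$ via Lemma~\ref{LemVarBound} and the eigenvalue event, and feed this into the Gaussian-maxima bound~\eqref{EqnCombine}; your constant-tracking in part (b), identifying $2(\randvar+\randvartil)\log N$ with $(1-\delta)(1-o(1))/\orpar$ and taking $\log N = \log(\pdim - (2-\overlap)\spindex)$, matches the paper exactly. However, there is a genuine gap in how you treat the right-hand side of~\eqref{EqnCombine}, namely $\exp\bigl(-\eta^2/(2(\randvar+\randvartil))\bigr)$, and it bites in both parts. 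In part (a) you take ``$\eta$ fixed,'' but in the regime $\spindex/\numobs = \Omega(1)$ the variance $\randvar + \randvartil$ is bounded \emph{below} by a positive constant, so with fixed $\eta$ the concentration tail $\exp\bigl(-\eta^2/(2(\randvar+\randvartil))\bigr)$ is bounded away from zero and your conditional probability does not vanish. The slack must grow with $N$: the paper sets $\delta = 1/4$ and $\eta = \frac{1}{2}\sqrt{2(\randvar+\randvartil)\log N}$, so the tail becomes $\exp(-\log N/4) \to 0$ while the residual threshold $\frac{1}{4}\sqrt{2(\randvar+\randvartil)\log N}$ still diverges past $2$.

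In part (b) the same issue appears in subtler form: you apply~\eqref{EqnCombine} with the fixed slack $\eta = \epsilon_0/2$ and assert the conditional probability tends to one, but this requires $\exp\bigl(-\eta^2/(2(\randvar+\randvartil))\bigr) \to 0$, i.e.\ $\randvar + \randvartil = o(1)$. That fact is true here but is not free: your good event $\cg$ contains only the lower-bound event~\eqref{EqnVarLower}, whereas vanishing of the variance needs the \emph{upper} bound~\eqref{EqnVarUpper} of Lemma~\ref{LemVarBound} (to pin $\|\duala_\Joint\|_2^2 + \|\dualb_\Joint\|_2^2$ at scale $\spindex$, hence the second term of~\eqref{EqnDefnRandVar} at scale $\spindex/\numobs \to 0$), together with the standing assumption $\relaxn^2 \numobs \to \infty$ and $\chi^2$-concentration to kill the noise term $\frac{1}{\relaxn^2 \numobs}\|\Proj{\Joint^\perp}\noisea\|_2^2/\numobs$. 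The paper invokes exactly this (``$\spindex/\numobs = o(1)$ implies $\randvar + \randvartil = o(1)$ w.h.p.'') as the last step. Once you enlarge $\cg$ with the upper-bound event and make the part (a) choice of growing $\eta$, your argument coincides with the paper's; the remaining bookkeeping with $\delta$, $\eta$ chosen after $\delpar$ is correct as you wrote it.
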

\begin{proof}
(a) If $\frac{\spindex}{\numobs}$ is bounded below by some constant $c
> 0$, then we have 
\begin{equation*}
\randvar \; \geq \; (1-\overlap) \frac{\spindex}{\numobs} \; \geq \;
(1-\overlap) c,
\end{equation*}
which implies that $(\randvar + \randvartil) \log N \rightarrow
+\infty$.  Thus, setting $\delta = 1/4$ and $\eta = \frac{1}{2}
\sqrt{2 (\randvar + \randvartil) \log N}$ in
equation~\eqref{EqnCombine} yields that (for $N$ sufficiently large):
\begin{eqnarray*}
\Prob \big[\max_{k \in \Jointcom} Z_k \leq (\frac{1}{2} -\delta)
  \sqrt{2 (\randvar + \randvartil) \log N} \big] & = & \Prob
  \big[\max_{k \in \Jointcom} Z_k \leq \frac{1}{4} \; \sqrt{2
  (\randvar + \randvartil) \log N} \big] \\
& \leq &  \exp \big(-\frac{\log N}{4} \big) \rightarrow 0.
\end{eqnarray*} 
Since $\frac{1}{4} \sqrt{2 (\randvar + \randvartil) \log N} \geq 2$
for $N$ large enough, the claim follows.

\noindent (b) In this case, we may apply the lower
bound~\eqref{EqnSecondLower}, so that, for any $\delta > 0$, we have
\begin{eqnarray*}
  \randvar + \randvartil \; \geq \; (1-\delta) \frac{\spindex}{2 \numobs} \big
  \{ (4 - 3 \overlap) + (\Gaplim(\Bstar,\relaxn)) \big \}(1-o(1))
\end{eqnarray*}
with high probability. Since $\numobs < (1-\nu) [(4 - 3 \overlap) +
(\Gaplim(\Bstar,\relaxn))] \spindex \log N$ by assumption, we have
\begin{eqnarray*}
  \sqrt{2 (\randvar +\randvartil) \log N} & \geq & (1-o(1)) \;
  \sqrt{(1-\delta) \frac{\spindex}{\numobs} \big \{ (4 - 3 \overlap)
    + (\Gaplim(\Bstar,\relaxn)) \big \}\log N} \\ 
  & \geq & (1-o(1)) \; \sqrt{\frac{1-\delta}{1-\nu}}.
\end{eqnarray*}
Consequently, from equation~\eqref{EqnCombine}, for any $\eta > 0$ and
$\delta > 0$, we have for all $N \geq N(\delta)$,
\begin{eqnarray}
\label{EqnCombineTwo}
  \Prob \big[\max_{k \in \Jointcom} Z_k \leq (1-\delta) \, (1 - o(1))
\; \frac{1}{\sqrt{1-\nu}} - \eta \big] & \leq & \exp
\big(-\frac{\eta^2}{2 (\randvar + \randvartil)} \big).
\end{eqnarray}
Since $\nu > 0$, we may choose $\eta, \delta > 0$ sufficiently small
so that for sufficiently large choices of $(\spindex, \numobs)$, we
have
\begin{equation*}
(1-\delta) \, (1 - o(1)) \frac{1}{\sqrt{1-\nu}} - \eta \; \geq \; 1 +
\epsilon
\end{equation*}
for some $\epsilon > 0$.  Since from Lemma~\ref{LemVarBound}, the
condition $\spindex/\numobs = o(1)$ implies that $\randvar +
\randvartil = o(1)$ w.h.p, we thus conclude that, using these choices
of $\eta$ and $\delta$, we have
\begin{eqnarray*}
\mprob[\max_{k \in \Jointcom} Z_k \leq 1 + \epsilon] & \leq & o(1) +
\exp \big(-\frac{\eta^2}{2 (\randvar + \randvartil)} \big) \;
\rightarrow \; 0,
\end{eqnarray*}
as claimed.
\end{proof}


\section{Discussion}

In this paper, we provided a number of theoretical results that
provide a sharp characterization of when, and if so by how much the
use of block $\ell_1/\ell_\infty$ regularization actually leads
improvements in statistical efficiency in the problem of multivariate
regression.  As suggested in a body of past work, the use of block
$\ell_1/\ell_\infty$ regularization is well-motivated in many
application contexts.  However, since it involves greater
computational cost than more naive approaches, the question of whether
this greater computational price yields statistical gains is an
important one.

This paper assessed statistical efficiency in terms of the number of
samples required to recover the support exactly; however, one could
imagine studying the same issue for related loss functions (e.g.,
$\ell_2$-loss or prediction loss), and it would be interesting to see
if the results were qualitatively similar or not.  Our results
demonstrate that some care needs to be exercised in the application of
$\ell_1/\ell_\infty$ regularization.  Indeed, it can yield improved
statistical efficiency when the regression matrix exhibits structured
sparsity, with high overlaps among the sets of active coefficients
within each column.  However, our analysis shows that these
improvements are quite sensitive to the exact structure of the
regression matrix, and how well it aligns with the regularizing norm.
When this alignment is not high enough, then the use of
$\ell_1/\ell_\infty$ can actually impair performance relative to more
naive (and less computationally intensive) schemes based on
$\ell_1$-regularization, such as the Lasso.  Moreover, whether or not
the $\ell_1/\ell_\infty$ yields statistical improvements is very
sensitive to the actual magnitudes of the different regression
problems.  In comparison to related results obtained by Obozinski et
al.~\cite{OboWaiJor08} on block $\ell_1/\ell_2$ regularization, the
block $\ell_1/\ell_\infty$ exhibits some fragility, in that the
conditions under which it actually improves statistical efficiency are
delicate and easily violated.  An interesting open direction is study
whether or not it is possible to develop computationally efficient
methods that are \emph{fully adaptive} to the sparsity
overlap--namely, methods that behave like ordinary
$\ell_1$-regularization when there is no or little shared sparsity,
and behave like block regularization schemes in the presence of shared
sparsity.


\appendix

\section{Recovering individual signed supports}
\label{SecIndividualSupports}

In this appendix, we discuss some issues associated with recovering
individual signed supports.  We begin by observing that once the
support union $\Joint$ has been recovered, one can restrict the
regression problem to this subset $\Joint$, and then apply Lasso to
each problem separately (with substantially lower cost, since each
problem is now low-dimensional) in order to recover the individual
signed supports.  If one is not willing to perform some extra
computation in this way, then the the interpretation of
Theorems~\ref{ThmDetDesign} and~\ref{ThmGauss}---in terms of
recovering the individual signed supports---requires a more delicate
treatment, which we discuss in this appendix.

Interestingly, the structure of the block $\ell_1/\ell_\infty$ norm
permits two ways in which to recover the individual signed supports.
\noindent \paragraph{$\ell_1/\ell_\infty$ primal recovery:} Solve the
block-regularized program~\eqref{EqnBlockReg}, thereby obtaining a
(primal) optimal solution $\Best \in \real^{\pdim \times \numreg}$.
Estimate the support union via $\estim{\Joint} \defn \bigcup
\limits_{\pind = 1, \ldots, \numreg} S(\best{\pind})$, and and
estimate the signed support vectors via
\begin{eqnarray}
\label{EqnSignPri}
[\SignPri(\best{\pind})]_\jind & \defn & \sign(\best{\pind}_\jind).
\end{eqnarray}

\noindent \paragraph{$\ell_1/\ell_\infty$ dual recovery:} Solve the
block-regularized program~\eqref{EqnBlockReg}, thereby obtaining an
primal solution $\Best \in \real^{\pdim \times \numreg}$.  For each
row $\jind = 1, \ldots, \pdim$, compute the set $\Mset_\jind \defn
\arg \max \limits_{\pind = 1, \ldots, \numreg} |\best{\pind}_\jind|$.
Estimate the support union via $\estim{\Joint} = \bigcup
\limits_{\pind = 1, \ldots, \numreg} S(\best{\pind})$, and estimate
the signed support vectors
\begin{eqnarray}
\label{EqnSignDuaTwo}
[\SignDua(\best{\pind}_\jind)] & = & \begin{cases}
\sign(\best{\pind}_\jind) & \mbox{if $\pind \in \Mset_\jind$} \\ 0 &
\mbox{otherwise.}
  \end{cases}
\end{eqnarray}
The procedure~\eqref{EqnSignDuaTwo} corresponds to estimating the
signed support on the basis of a dual optimal solution associated with
the optimal primal solution.

The dual signed support recovery method~\eqref{EqnSignDuaTwo} is more
conservative in estimating the individual support sets.  In
particular, for any given $\pind \in \{1, \ldots, \numreg\}$, it only
allows an index $\jind$ to enter the signed support estimate
$\SignDua(\best{\pind})$ when $|\best{\pind}_\jind|$ achieves the
maximum magnitude (possibly non-unique) across all indices $\pind = 1,
\ldots, \numreg$.  Consequently, unlike the primal
estimator~\eqref{EqnSignDuaTwo}, a corollary of
Theorem~\ref{ThmDetDesign} guarantees that the dual signed support
method~\eqref{EqnSignDuaTwo} never suffers from false inclusions in
the signed support set.  On the other hand, unlike the primal
estimator, it may incorrectly exclude indices of some supports---that
is, it may exhibit false exclusions. 

To provide a concrete illustration of this distinction, suppose that
$\pdim = 4$ and $\numreg = 3$, and that the true matrix $\Bstar$ and
estimate take the following form:
\begin{equation*}
\Bstar \, =\, \begin{bmatrix}  2 & 0 & -3  \\
                               2 & 4 &  0  \\
                               0 & 0 & 0   \\
                               0 & 0 & 0   
	      \end{bmatrix}, \quad \mbox{and} \quad
\Best \, =\, \begin{bmatrix}  1.9 & 0.1 & -2.9  \\
                              1.7 & 3.9 &  -0.1  \\
                               0 & 0 & 0   \\
                               0 & 0 & 0   
	      \end{bmatrix}.
\end{equation*}
Consistent with the claims of Theorem~\ref{ThmDetDesign}, the estimate
$\Best$ correctly recovers the support union---viz.  $S(\Best) =
\estim{\Joint} = \{1, 2\} = S(\Bstar)$.  The primal~\eqref{EqnSignPri}
and dual~\eqref{EqnSignDuaTwo} methods return the following estimates
of the individual signed supports:
\begin{equation*}
\SignPri(\Best) \; = \;  \begin{bmatrix}  1 & 1 & -1  \\
                               1 & 1 &  -1  \\
                               0 & 0 & 0   \\
                               0 & 0 & 0   
	      \end{bmatrix}, \quad \mbox{and} \quad
\SignDua(\Best) \; = \;  \begin{bmatrix}  0 & 0 & -1  \\
                               0 & 1 &  0  \\
                               0 & 0 & 0   \\
                               0 & 0 & 0  
	      \end{bmatrix}.
\end{equation*}
Consequently, the primal estimate includes false non-zeros in
positions $(1,2)$ and $(2,3)$, whereas the dual estimate includes
false zeros in positions $(1,1)$ and $(2,1)$.

We note that it is possible to ensure that under some conditions that
the dual support method~\eqref{EqnSignDuaTwo} will correctly recover
each of the individual signed supports, without any incorrect
exclusions.  However, as illustrated by Theorem~\ref{ThmPhase} and
Corollary~\ref{gapcor}, doing so requires additional assumptions on
the size of the gap $|\bstar{\pind}_\jind| - |\bstar{\pindtwo}_\jind|$
for indices $\jind \in \Both \defn \Sset(\bstar{\pind}) \cap
\Sset(\bstar{\pindtwo})$.

\section{Proof of Lemma~\ref{LemPopMutInco}}
\label{AppLemPopMutInco}
Note that conditioned $\mata_\Joint$, the rows of the random matrix
$\Xmatf{\pind}{\Jointcom}$ are i.i.d. Gaussian random vectors with
mean $\binprod{\Covmatif{\pind}{\Jointcom \Joint}
(\Covmatif{\pind}{\Joint \Joint})^{-1}} {\Xmatf{\pind}{\Joint}}$ and
covariance
\begin{eqnarray*}
\Covmatif{\pind}{\Jointcom | \Joint} & = & \Covmatif{\pind}{\Jointcom
\Jointcom} - \Covmatif{\pind}{\Jointcom \Joint}
(\Covmatif{\pind}{\Joint \Joint})^{-1} \Covmatif{\pind}{\Joint
\Jointcom}.
\end{eqnarray*}
\begin{eqnarray*}
\frac{1}{\numobs} \binprod{\Xmatf{\pind}{\Jointcom}}{
\Xmatf{\pind}{\Joint} ( \inprod {\frac{1}{\numobs}
\Xmatf{\pind}{\Joint}} {\Xmatf{\pind}{\Joint}})^{-1}} &
\stackrel{d}{=} & \Covmatif{\pind}{\Jointcom \Joint}
(\Covmatif{\pind}{\Joint \Joint})^{-1} \dwitf{\pind}{\Joint} +
\frac{1}{\numobs} \binprod{\Yif{\pind}{\Jointcom}} {\mata_\Joint
(\inprod{\frac{1}{\numobs}
\Xmatf{\pind}{\Joint}}{\Xmatf{\pind}{\Joint}})^{-1} }
\end{eqnarray*}
where $\Yif{\pind}{\Jointcom} \sim N(0, \Covmatif{\pind}{\Jointcom
\mid \Joint})$.

Using these expressions and triangle inequality, we obtain that
$\Mterm$ is upper bounded by
\begin{equation*}
\max_{\jind \in \Jointcom} \big \{ \sum_{\pind=1}^\numreg \| e_\jind^T
\Covmatif{\pind}{\Jointcom \Joint} (\Covmatif{\pind}{\Joint
\Joint})^{-1} \|_1 \big \} + \max_{\jind \in \Jointcom}
\sum_{\pind=1}^\numreg \big | \frac{1}{\numobs}
\binprod{\Yif{\pind}{\jind}} {\Xmatf{\pind}{\Joint}
(\inprod{\frac{1}{\numobs} \Xmatf{\pind}{\Joint}}
{\Xmatf{\pind}{\Joint}})^{-1} \dwitf{\pind}{\Joint}} \big |.
\end{equation*}
Applying the mutual incoherence assumption~\eqref{EqnMutIncoRand}, we
obtain
\begin{eqnarray*}
\Mterm & \leq & (1-\incopar) + \max_{\jind \in \Jointcom}
\sum_{\pind=1}^\numreg \big| \frac{1}{\numobs}
\binprod{\Yif{\pind}{\jind}} {\Xmatf{\pind}{\Joint}
(\inprod{\frac{1}{\numobs} \Xmatf{\pind}{\Joint}}
{\Xmatf{\pind}{\Joint}})^{-1} \dwitf{\pind}{\Joint}} \big |,
\end{eqnarray*}
as claimed.

\section{Proof of Lemma~\ref{LemVarBound}}
\label{AppLemVarBound}

Recall that $\dwitf{1}{\Joint} = (\dwitf{1}{\Sing},
\dwitf{1}{\Both})$, $\|\dwitf{1}{\Sing}\|_2^2 = (1-\overlap)
\spindex$, and that $\Both$ is the set where $|\best{1}_\Both| =
|\best{2}_\Both|$.  Thus, the claim is equivalent to showing that
$\|\duala_\Both\|_2^2$ is concentrated.  If $\overlap = 0$, then the claim is trivial, so that we
may assume that $\overlap > 0$.

Recall that
\begin{multline}
  \label{EqnRefDualB}
  \Signmat^1 \dwitf{1}{\Both} = \frac{1}{\relaxn} \big \{ \Smat^2
    \big[\Smat^1 + \Smat^2 \big]^{-1} \sahand^1 - \Smat^1 \big[\Smat^2
    + \Smat^1 \big]^{-1} \sahand^2 \big \} + \Smat^1 \big[ \Smat^1 +
    \Smat^2 \big]^{-1} \vec{1} - \\ \frac{1}{\relaxn} [(\Smat^1)^{-1}
    + (\Smat^2)^{-1}]^{-1}\bdiff.
\end{multline}

Using $\matsnorm{\cdot}{2}$ to denote the spectral norm, we first
claim that as long as $\spindex/\numobs \rightarrow 0$, then the
following events hold with probability greater than $1- c_1 \exp(-c_2
\numobs)$:
\begin{subequations}
  \label{AllMatControl}
  \begin{eqnarray}
    \label{EqnSmatControl}
    \matsnorm{\Smat^1 - I}{2} & = & \myorder, \\
    \label{EqnAdjControl}
    \matsnorm{[(\Smat^1)^{-1} + (\Smat^2)^{-1}]^{-1} - I/2}{2} & = & \myorder, \qquad \mbox{and} \\
    \label{EqnRatControl}
    \matsnorm{\Smat^1 \big[\Smat^1 + \Smat^2 \big]^{-1} - I/2}{2} & = &
    \myorder,
  \end{eqnarray}
\end{subequations}
as well as the analogous events with $\Smat^1$ and $\Smat^2$
interchanged.

To verify the bound~\eqref{EqnSmatControl}, we first diagonalize the
projection matrix.  All of its eigenvalues are $0$ or $1$, and it has
rank $(\numobs - \spindex)$ w.p. one, so that we may write
$\Proj{\Sing^\perp} = U^T D U$ for some orthogonal matrix $U$, and the
diagonal matrix $D = \diag \{1_{\numobs - \spindex}, 0_\spindex \}$,
\begin{eqnarray*}
\Smat & = & \numobs^{-1} \mata_\Both^T U^T D U \mata_\Both.
\end{eqnarray*}
But the projection $\Proj{\Sing^\perp}$ is independent of
$\mata_\Both$, which implies that the random rotation matrix $U$ is
independent of $\mata_\Both$, and hence $\mata_\Both \edist U
\mata_\Both$.  Since $D$ is diagonal with $(\numobs - \spindex)$ ones
and $\spindex$ zeros, $\Smat \edist \numobs^{-1} W^T W$, where $W \in
\real^{(\numobs - \spindex) \times |\Both|}$ is a standard Gaussian
random matrix.  Consequently, we have
\begin{eqnarray*}
\matsnorm{\Smat - I}{2} & \edist & \matsnorm{\numobs^{-1} W^T W -
I}{2} \\
& \leq & \big| \frac{\numobs-\spindex}{\numobs} -1
\big|\matsnorm{\frac{1}{\numobs-\spindex} W^T W}{2} +
\matsnorm{\frac{1}{\numobs-\spindex} W^T W - I}{2} \\
& = & \myorder,
\end{eqnarray*}
since $\matsnorm{W^T W/(\numobs-\spindex)}{2} = \order(1)$, and
\begin{equation*}
\matsnorm{\frac{1}{\numobs-\spindex} W^T W - I}{2} \; = \;
\order(\sqrt{\frac{\spindex}{\numobs- \spindex}}) \; = \; \myorder,
\end{equation*}
using concentration arguments for random matrices (see
Lemma~\ref{LemRandMat} in Appendix~\ref{AppLargeDev}).

For \eqref{EqnAdjControl} we may use the triangle inequality and the
submultiplicativity of the norm so that
\begin{eqnarray*}
  \matsnorm{[\Smat^{-1} + \Smattil^{-1}]^{-1} - I/2}{2} & = &
  \matsnorm{[\Smat^{-1} + \Smattil^{-1}]^{-1}(I - [\Smat^{-1} +
  \Smattil^{-1}]/2)}{2} \\
& \leq & \matsnorm{[\Smat^{-1} + \Smattil^{-1}]^{-1}}{2} \;
  \matsnorm{I - [\Smat^{-1} + \Smattil^{-1}]/2}{2} \\ 
& \leq & \frac{1}{2} \big \{ \matsnorm{I/2 - \Smat^{-1}/2}{2} +
  \matsnorm{I/2 - \Smattil^{-1}/2}{2} \big \} \matsnorm{[\Smat^{-1} +
  \Smattil^{-1}]^{-1}}{2} \\
& = & \matsnorm{[\Smat^{-1} + \Smattil^{-1}]^{-1}}{2} \; \myorder,
\end{eqnarray*}
Finally, since $\matsnorm{[\Smat^{-1} + \Smattil^{-1}]^{-1}}{2} =
\order(1)$, equation~\eqref{EqnAdjControl} is valid.

In order to establish the bound~\eqref{EqnRatControl}, we have
\begin{eqnarray*}
\matsnorm{\Smat [\Smat + \Smattil ]^{-1} - I/2}{2} & = &
\matsnorm{(\Smat/2 - \Smattil/2) [\Smat + \Smattil ]^{-1}}{2} \\
& \leq & \frac{1}{2}\big \{ \matsnorm{\Smat - I}{2} \; +
\matsnorm{\Smattil-I}{2} \big \} \; \matsnorm{[\Smat +
\Smattil]^{-1}}{2} \\
& = & \matsnorm{[\Smat + \Smattil]^{-1}}{2} \; \myorder.
\end{eqnarray*}
Since $\matsnorm{[\Smat + \Smattil] - 2I}{2} = \myorder \rightarrow
0$, we have $\matsnorm{[\Smat + \Smattil]^{-1}}{2} = \order(1)$, which
establishes the claim~\eqref{EqnRatControl}.

We are now ready to establish the claims of the lemma.  From the
representation~\eqref{EqnRefDualB}, we apply triangle inequality and
our bounds on spectral norms, thereby obtaining
\begin{eqnarray*}
  \sqrt{\norm{\duala_\Both}_2^2 + \norm{\dualb_\Both}_2^2} & \leq &
  \sqrt{\norm{\frac{\vecone}{2} - \frac{1}{2 \relaxn}
      (\abs{\bvecstar{2}_{\Both}} - \abs{\bvecstar{1}_{\Both}})}_2^2 +
    \norm{\frac{\vecone}{2} + \frac{1}{2 \relaxn} (\abs{\bvecstar{2}_\Both} -
      \abs{\bvecstar{1}_\Both})}_2^2} + 2 \norm{\are} \\
  & \leq &
  \sqrt{\spindex} \big \{ \sqrt{\frac{\overlap}{2} + \frac{1}{2
      \spindex \relaxn^2} \norm{\abs{\bvecstar{1}_\Both} -
      \abs{\bvecstar{2}_{\Both}}}_2^2} + \frac{2}{\sqrt{\spindex}} \norm{r}_2
  \big \}
\end{eqnarray*}
with probability greater than $1-c_1 \exp(-c_2 \numobs)$, where $\are
= \dwitf{1}{\Both} - \frac{1}{2} (\vecone - \frac{1}{\relaxn}
(\abs{\bvecstar{2}_\Both} - \abs{\bvecstar{1}_\Both}))$.  By the
decomposition of $\dwitf{1}{\Both}$ in equation~\eqref{EqnRefDualB}
and applying bounds~\eqref{AllMatControl}
\begin{equation*}
  \norm{\are}_2 \leq \sqrt{s} \big \{ \myorder + \frac{1}{\relaxn
    \sqrt{s}} \; \myorder \norm{\abs{\bvecstar{1}_\Both} -
    \abs{\bvecstar{2}_\Both}}_2 + \frac{1}{2 \sqrt{\spindex} \relaxn} (1 +
  \myorder) \; \big[ \| \sahand \|_2 + \| \sahandtil \|_2 \big] \big
  \}
\end{equation*}
Since $\spindex/\numobs = o(1)$, in order to establish the upper
bound~\eqref{EqnVarUpper} it suffices to show that $\|\sahand\|_2 +
\|\sahandtil\|_2 = o(\sqrt{\spindex} \relaxn)$ w.h.p.  Similarly, in
the other direction, we have
\begin{eqnarray*}
  \sqrt{\norm{\dwitf{1}{\Both}}_2^2 + \norm{\dwitf{2}{\Both}}_2^2} &
  \geq & \sqrt{\spindex} \big \{ \sqrt{\frac{\overlap}{2} + \frac{1}{2
  \spindex \relaxn^2} \norm{\abs{\bvecstar{1}_\Both} -
  \abs{\bvecstar{2}_\Both}}_2^2} - \frac{2}{\sqrt{\spindex}}
  \norm{r}_2 \big \}
\end{eqnarray*}
Following the same line of reasoning, in order to prove the lower
 bound~\eqref{EqnVarLower}, it suffices to show that $\|\sahand\|_2 +
 \|\sahandtil\|_2 = o(\sqrt{\spindex} \relaxn)$ w.h.p.

Since $\|\sahand\|_2$ and $\|\sahandtil\|_2$ behave similarly, it
suffices to show that $\|\sahand\|_2 = o(\relaxn \sqrt{\spindex})$.
From the definition~\eqref{EqnDefnSahand}, we see that conditioned on
$(\mata_\Sing, \noisea, \duala_\Sing)$, the random vector $\sahand$ is
zero-mean Gaussian, with i.i.d. elements with variance
\begin{eqnarray*}
\sigma^2 & \defn & \frac{\relaxn^2}{\numobs} (\duala_\Sing)^T
(\mata_\Sing^T \mata_\Sing/\numobs)^{-1} \duala_\Sing +
\frac{1}{\numobs} \noisea^T \proj_{\Sing^\perp} \noisea.
\end{eqnarray*}
Recalling that $\|\duala_\Sing\|_2^2 = (1-\overlap) \spindex$, we have
\begin{eqnarray*}
\sigma^2 & \leq & \frac{\relaxn^2 (1-\overlap) \spindex }{\numobs} \;
\lammax((\mata_\Sing^T \mata_\Sing/\numobs)^{-1}) + \frac{1}{\numobs}
\frac{\| \proj_{\Sing^\perp}(\noisea)\|_2^2}{\numobs}
\end{eqnarray*}
By random matrix concentration (see the discussion following
Lemma~\ref{LemRandMat} in Appendix~\ref{AppLargeDev}), we have
$\lammax((\mata_\Sing^T \mata_\Sing/\numobs)^{-1}) \leq 1 + \myorder$
w.h.p., and by $\chi^2$ tail bounds (see Lemma~\ref{LemChiTail} in
Appendix~\ref{AppLargeDev}), we have $\frac{\|
\proj_{\Sing^\perp}(\noisea)\|_2^2}{\numobs} = \order(1)$ w.h.p.
Consequently, with high probability, we have $\sigma^2 = \order(
\frac{\relaxn^2 \spindex}{\numobs} + \frac{1}{\numobs})$.  Since the
Gaussian random vector $\sahand$ has length $|\Both| =
\Theta(\spindex)$, again by concentration for $\chi^2$ random
variables, we have (with probability greater than $1-c_1 \exp(-c_2
\spindex)$), $\|\sahand\|_2^2 = \order(\sigma^2 \spindex)$.  Combining
the pieces, we conclude that w.h.p.
\begin{eqnarray*}
\|\sahand\|_2^2 & = & \order \big(\relaxn^2 \spindex
\frac{\spindex}{\numobs} + \frac{\spindex}{\numobs} \big) \\
& = & \order\big( \relaxn^2 \spindex \; \big[ \frac{\spindex}{\numobs}
+ \frac{1}{\relaxn^2 \numobs} \big] \big) \; = \; o(\relaxn^2
\spindex),
\end{eqnarray*}
where the final equality follows since $\spindex/\numobs = o(1)$ and
$1/(\relaxn^2 \numobs) = o(1)$.


\section{Convex-analytic characterization of optimal solutions}

This section is devoted to the development of various properties of
the optimal solution(s) of the block
$\ell_1/\ell_\infty$-regularized problem~\eqref{EqnBlockReg}.

\subsection{Basic optimality conditions}

By standard conditions for optimality in convex
programs~\cite{Rockafellar}, the zero-vector must belong to the
subdifferential of the objective function in the convex
program~\eqref{EqnBlockReg}, or equivalently, we must have
for each $\pdim =1,2,\ldots,\numreg$
\begin{eqnarray}
\label{maindual1}
\frac{1}{\numobs} \binprod{\Xmat{\pind}}{\Xmat{\pind}} \best{\pind} -
\frac{1}{\numobs} \Xmatt{\pind} \yobs{\pind} + \relaxn \dwit{\pind} &
= & 0,
\end{eqnarray}
where $\Dwit \in \real^{\pdim \times \numreg}$ must be an element of
the subdifferential $\partial \| \Best\|_{\infty,1}$.  Substituting
the relation $\yobs{\pind} = \Xmat{\pind} \bvecstar{\pind} +
\wnoise{\pind}$, we obtain
\begin{equation}
  \label{dualdiff}
  \frac{1}{\numobs} \binprod{\Xmat{\pind}}{\Xmat{\pind}} (\best{\pind}
  - \bvecstar{\pind}) - \frac{1}{\numobs} \Xmatt{\pind} \wnoise{\pind}
  + \relaxn \dwit{\pind} = 0.
\end{equation}

\subsection{Proof of Lemma~\ref{LemKey}}
\label{AppLemKey}

We begin with the proof of part (i): suppose that steps (A) through
(C) of the primal-witness construction succeed.  By definition, it
outputs a primal pair, of the form $(\Bwit_\Joint \; , \; 0)$, along
with a candidate dual optimal solution $ \{ (\Dwit_\Joint \; , \;
\Dwit_\Jointcom) \}$.  Note that the conditions defining the
$\ell_1/\ell_\infty$ subdifferential apply in an elementwise manner,
to each index $i = 1, \ldots, \pdim$.  Since the sub-vector
$\Dwit_\Joint$ was chosen from the subdifferential of the restricted
optimal solution, it is dual feasible.  Moreover, since the strict
dual feasibility condition~\eqref{EqnStrictDual} holds, the matrix
$\Dwit_\Jointcom$ constructed in step (C) is dual feasible for the
zero-solution in the sub-block $\Jointcom$.  Therefore, we conclude
that $(\Bwit_\Joint \; , \; 0)$ is a primal optimal solution for the
full block-regularized program~\eqref{EqnBlockReg}.

It remains to establish uniqueness of this solution.  Define the ball
\begin{eqnarray*}
\Kball & = & \{ \Dwit \in \real^{\pdim \times \numreg} \mid
\sum_{\pind = 1}^\numreg |\dwit{\pind}_\jind| \leq 1 \quad \forall
\jind=1, \ldots, \pdim \},
\end{eqnarray*}
and observe that we have the variational representation
\begin{eqnarray*}
  \|B\|_{1, \infty} & = & \sup_{ \Dwit \in \Kball} \inprod{\Dwit}{B}
\end{eqnarray*}
where $\inprod{\cdot}{\cdot}$ denotes the Euclidean inner product.
With this notation, the block-regularized program~\eqref{EqnBlockReg}
is equivalent to the saddle-point problem
\begin{equation*}
  \inf_{B \in \real^{\pdim \times \numreg}} \sup_{\Dwit \in \Kball}
\biggr \{ \frac{1}{2 \numobs} \sum_{\pind = 1}^\numreg \|\yobs{\pind}
- \Xmat{\pind} \beta^\pind\|_2^2 + \relaxn \inprod{\Dwit}{B} \biggr
\}.
\end{equation*}
Since this saddle-point problem is strictly feasible and
convex-concave, it has a value.  Moreover, given any dual optimal
solution---in particular, $\Dwit$ from the primal-dual
construction---any optimal primal solution $\Best$ must satisfy the
saddle point condition
\begin{eqnarray*}
  \|\Best\|_{1, \infty} & = & \sup_{ \Dwit \in
    \Kball} \inprod{\Dwit}{\Best}
\end{eqnarray*}
But this condition can only hold if $\forall \pind \in \{1 , 2 ,
\ldots, \numreg \}$, $\beta^\pind_\jind = 0$ for any index $\jind \in
\{1, \ldots, \pdim \}$ such that $\sum_{\pind = 1}^\numreg
|\dwit{\pind}_\jind| < 1$.  Therefore, any optimal primal solution
must satisfy $\Best_\Jointcom = 0$, so that solving the original
program~\eqref{EqnBlockReg} is equivalent to solving the restricted
program~\eqref{EqnRestricted}. Lastly, if the matrices
$\binprod{\Xmat{\pind}_\Joint}{\Xmat{\pind}_\Joint}$ are invertible
for each $\pind \in \{1 , 2, \ldots, \numreg \}$, then the restricted
problem~\eqref{EqnRestricted} is strictly convex, and so has a unique
solution, thereby completing the proof of Lemma~\ref{LemKey}(i).


We now prove part (ii) of Lemma~\ref{LemKey}.  Suppose that we are
given an estimate $\Best$ of the true parameters $\Bstar$ by solving
the convex program~\eqref{EqnBlockReg} such that $\Best_\Jointcom =
0$.

Since $\Best$ is an optimal solution to the convex
program~\eqref{EqnBlockReg}, the the optimality conditions of
equation~\eqref{dualdiff}, must be satified. We may rewrite those
conditions as
\begin{eqnarray*}
\frac{1}{\numobs} \binprod{\Xmatf{\pind}{\Joint}}{\Xmat{\pind}}
(\Nuvar{\pind}) - \frac{1}{\numobs} \Xmatft{\pind}{\Joint}
\wnoise{\pind} + \relaxn \dwitf{\pind}{\Joint} = 0 \\
\frac{1}{\numobs} \binprod{\Xmatf{\pind}{\Jointcom}}{\Xmat{\pind}}
(\Nuvar{\pind}) - \frac{1}{\numobs} \Xmatft{\pind}{\Jointcom}
\wnoise{\pind} + \relaxn \dwitf{\pind}{\Jointcom} = 0,
\end{eqnarray*}
where $\Nuvar{\pind} = \best{\pind} - \bvecstar{\pind}$. Recalling
that $\Best_\Jointcom = \Bstar_\Jointcom = 0$, we obtain
\begin{subequations}
\begin{eqnarray}
\label{Lem2:1}
\frac{1}{\numobs}
\binprod{\Xmatf{\pind}{\Joint}}{\Xmatf{\pind}{\Joint}}
(\Nuvar{\pind}_\Joint) - \frac{1}{\numobs} \Xmatft{\pind}{\Joint}
\wnoise{\pind} + \relaxn \dwitf{\pind}{\Joint} = 0, \qquad \mbox{and}
\\
\label{Lem2:2}
\frac{1}{\numobs}
\binprod{\Xmatf{\pind}{\Jointcom}}{\Xmatf{\pind}{\Joint}}
(\Nuvar{\pind}_\Joint) - \frac{1}{\numobs}
\Xmatft{\pind}{\Jointcom} \wnoise{\pind} + \relaxn
\dwitf{\pind}{\Jointcom} = 0.
\end{eqnarray}
\end{subequations}
Again, by standard conditions for optimality in convex
programs~\cite{Bertsekas_nonlin,Hiriart1}, the first of these two
equations is exactly the condition that must be satisfied by an
optimal solution of the restricted
program~\eqref{EqnRestricted}. However, we have already shown that the
candidate solution $\Best_\Joint$ satisfies this condition, so that it
must also be an optimal solution of the convex
program~\eqref{EqnRestricted}. Additionally, the value of
$\Dwitf{\Joint}$ that satisfies equation~\eqref{Lem2:1} for each $i
\in \{1,2,\ldots,\numreg\}$ is an element of $\partial \|
\Best\|_{\infty,1}$.  We have thus shown that steps (B) and (C) of the
primal-witness construction succeed.  It remains to establish
uniqueness in part (A).  However, we note that
$\binprod{\Xmatf{\pind}{\Joint}}{\Xmatf{\pind}{\Joint}}$ is invertible
for each $\pind$.  Hence, for any solution $\Best$ such that
$\Best_\Jointcom = 0$,
\begin{equation*}
   \Nuvar{\pind}_\Joint = (\frac{1}{\numobs}
   \binprod{\Xmatf{\pind}{\Joint}}{\Xmatf{\pind}{\Joint}})^{-1} \left
   [ \frac{1}{\numobs} \Xmatft{\pind}{\Joint} \wnoise{\pind} - \relaxn
   \dwitf{\pind}{\Joint} \right ]
\end{equation*}
is well-defined and unique, noting that $\Nuvar{\pind}_\Jointcom =
0$. Thus, we have established the equality~\eqref{EqnDefnUvar} and
that $\Best_\Joint$ is unique. Therefore, $\Best$ gives solutions to
steps (A) and (B) when solving the restricted convex program over the
set $\Joint$.

Finally, we derive the form of the dual solution
$\dwitf{\pind}{\Jointcom}$, as a function of
$\binprod{\Xmatf{\pind}{\Joint}}{\Xmatf{\pind}{\Joint}}$,
$\Dwit_\Joint$, and $\Best-\Bstar$.  Recall that
$\binprod{\Xmatf{\pind}{\Joint}}{\Xmatf{\pind}{\Joint}}$ is
invertible, $\Dwit_\Joint$ is an element of the subdifferential of
$\partial \bnorminf{\Bwit_\Joint}$, and $\Best_\Jointcom =
\Bstar_\Jointcom = 0$.  From equation~\eqref{EqnDefnUvar}, we have
\begin{eqnarray}
\label{EqnAppDualwitOff}
\dwitf{\pind}{\Jointcom} & = & \frac{1}{\relaxn \numobs}
\binprod{\Xmatf{\pind}{\Jointcom} } {
(I-\myproj{\Xmatf{\pind}{\Joint}}) \wnoise{\pind} } +
\frac{1}{\numobs} \binprod{\Xmatf{\pind}{\Jointcom}}
{\Xmatf{\pind}{\Joint} (\frac{1}{\numobs}
\inprod{\Xmatf{\pind}{\Joint}}{\Xmatf{\pind}{\Joint}})^{-1}
\dwitf{\pind}{\Joint}} \quad \mbox{for $\pind = 1, \ldots,
\numreg$. $\qquad$}
\end{eqnarray}
The claimed form of the dual solution follows by substituting
equation~\eqref{EqnDefnUvar} into equation~\eqref{Lem2:2}.
%

\subsection{Subgradients on the support}
\label{subgradsupportdef}
In this section, we focus on the specific form of the dual variables
$\dwit{\pind}_\Joint$.  Our approach is to construct a candidate set
of dual variables, and then show that they are valid.  We begin by
defining the sets $\Both = \Sset(\bstar{\pind}) \cap
\Sset(\bstar{\pindtwo})$, corresponding to the intersection of the
supports, and the set $\Sing = \Joint \setminus \Both$ corresponding
to elements in one (but not both) of the supports.  For $\pind = 1,2$,
we let $\Signmat^\pind \in \real^{\overlap \spindex \times \overlap
\spindex}$ is a diagonal matrix whose diagonal entries correspond to
$\sign(\bvecstar{\pind}_{\Both})$.  In addition, we define the vectors
$\sahand^\pind \in \real^{\overlap \spindex}$ and matrices
$\Smat^\pind \in \real^{\overlap \spindex \times \overlap \spindex}$
via
\begin{subequations}
\begin{eqnarray}
  \label{EqnDefnSahand}
    \sahand^\pind & \define & \Signmat^\pind \left [ \frac{1}{\numobs}
	\binprod{\Xmatf{\pind}{\Both}}{\Xmatf{\pind}{\Sing}
	(\frac{1}{\numobs}
	\binprod{\Xmatf{\pind}{\Sing}}{\Xmatf{\pind}{\Sing}})^{-1}}
	\relaxn \dwitf{\pind}{\Sing} - \frac{1}{\numobs}
	\binprod{\Xmatf{\pind}{\Both}}{I -
	\myproj{\Xmatf{\pind}{\Sing}}} \wnoise{\pind} \right ]\\
    \Smat^\pind & \define & \frac{1}{\numobs} \Signmat^\pind
	 \binprod{\Xmatf{\pind}{\Both}} {(I -
	 \proj_{(\Xmatf{\pind}{\Sing})})\Xmatf{\pind}{\Both}}
	 \Signmat^\pind.
  \end{eqnarray}
\end{subequations}
Given these definitions, we have the following lemma:
\begin{lemma}
\label{lemmaboth}
Assume that $\numreg = 2$, and that $|\best{1}_\Both| =
|\best{2}_\Both|$.  If $\Best_\Jointcom = \Bstar_\Jointcom = 0$, then
the dual variable $\dwit{1}$ satisfies the relation
\begin{multline}
\label{lemma:ref}
\Signmat^1 \dwitf{1}{\Both} = \frac{1}{\relaxn} \big \{ \Smat^2
\big[\Smat^1 + \Smat^2 \big]^{-1} \sahand^1 - \Smat^1 \big[\Smat^2 +
\Smat^1 \big]^{-1} \sahand^2 \big \} + \Smat^1 \big[ \Smat^1 + \Smat^2
\big]^{-1} \vec{1} - \\
    \frac{1}{\relaxn} [(\Smat^1)^{-1} +
      (\Smat^2)^{-1}]^{-1} \bdiff
  \end{multline}
  and $\dwitf{2}{\Sing} = \SignSup(\bvecstar{2}_\Sing)$, with
  analogous results holding for $\dwit{2}$.
\end{lemma}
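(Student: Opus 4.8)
The plan is to derive the relation~\eqref{lemma:ref} directly from the stationarity condition~\eqref{Lem2:1} of the restricted program, specialized to $\numreg = 2$. Recall that~\eqref{Lem2:1} holds for each $\pind = 1,2$ on the support $\Joint$, with $\Nuvar{\pind}_\Joint = \best{\pind}_\Joint - \bvecstar{\pind}_\Joint$. First I would partition $\Joint = \Sing \cup \Both$ and fix the dual variables on $\Sing$: since each row $\jind \in \Sing$ is active in exactly one of the two problems, Lemma~\ref{LemSubdiff}(i) forces $\dwitf{\pind}{\jind} = \sign(\bvecstar{\pind}_\jind)$ in the problem where the row is active and $\dwitf{\pind}{\jind} = 0$ in the other, which is exactly the asserted identity $\dwitf{2}{\Sing} = \SignSup(\bvecstar{2}_\Sing)$ (and the symmetric statement for problem $1$). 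The only remaining unknowns are then $\dwitf{1}{\Both}$ and $\dwitf{2}{\Both}$.

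Next I would eliminate the nuisance coordinates $\Nuvar{\pind}_\Sing$ by a Schur-complement step. Writing the $|\Joint| \times |\Joint|$ Gram matrix $\frac{1}{\numobs}\binprod{\Xmatf{\pind}{\Joint}}{\Xmatf{\pind}{\Joint}}$ in block form over $(\Sing, \Both)$, solving the $\Sing$-block of~\eqref{Lem2:1} for $\Nuvar{\pind}_\Sing$, and substituting into the $\Both$-block produces the conditional Gram matrix $\frac{1}{\numobs}\binprod{\Xmatf{\pind}{\Both}}{(I - \myproj{\Xmatf{\pind}{\Sing}})\Xmatf{\pind}{\Both}}$ as the left-hand operator, and collects the noise term together with the now-known contribution of $\dwitf{\pind}{\Sing}$ into a single vector on the right. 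Conjugating by the diagonal sign matrix $\Signmat^\pind$ and using $(\Signmat^\pind)^2 = I$ converts the left-hand operator into $\Smat^\pind$ and the right-hand accumulation into the vector $\sahand^\pind$ of~\eqref{EqnDefnSahand}, so that for each $\pind = 1,2$ the stationarity condition collapses to
\[
\Smat^\pind\,\Signmat^\pind\,\Nuvar{\pind}_\Both \;=\; \sahand^\pind \;-\; \relaxn\,\Signmat^\pind\,\dwitf{\pind}{\Both}.
\]
The overall sign of the noise term is immaterial here, since $\wnoise{\pind}$ is symmetric and only its second-moment behaviour is used downstream.

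Then I would impose the two coupling constraints and solve the resulting block system for $\Signmat^1\dwitf{1}{\Both}$. The hypothesis $\abs{\best{1}_\Both} = \abs{\best{2}_\Both}$, combined with sign consistency on $\Both$ (so that $\Signmat^\pind\best{\pind}_\Both = \abs{\best{\pind}_\Both}$), yields after subtraction the gap constraint $\Signmat^1\Nuvar{1}_\Both - \Signmat^2\Nuvar{2}_\Both = \abs{\bvecstar{2}_\Both} - \abs{\bvecstar{1}_\Both} = \bdiff$. Separately, since both entries of every row $\jind \in \Both$ attain the row maximum, Lemma~\ref{LemSubdiff}(i) distributes the subgradient mass as nonnegative weights summing to one; in sign-adjusted coordinates this reads $\Signmat^1\dwitf{1}{\Both} + \Signmat^2\dwitf{2}{\Both} = \vecone$. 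Inverting the two stationarity relations to write $\Signmat^\pind\Nuvar{\pind}_\Both = (\Smat^\pind)^{-1}(\sahand^\pind - \relaxn\,\Signmat^\pind\dwitf{\pind}{\Both})$, eliminating $\Signmat^2\dwitf{2}{\Both} = \vecone - \Signmat^1\dwitf{1}{\Both}$ via the mass constraint, and substituting into the gap constraint gives a single linear equation in $\Signmat^1\dwitf{1}{\Both}$. Solving it and simplifying with the parallel-sum identity $[(\Smat^1)^{-1} + (\Smat^2)^{-1}]^{-1} = \Smat^1(\Smat^1 + \Smat^2)^{-1}\Smat^2 = \Smat^2(\Smat^1 + \Smat^2)^{-1}\Smat^1$ collapses the inverse factors into the coefficients $\Smat^2(\Smat^1+\Smat^2)^{-1}$, $\Smat^1(\Smat^1+\Smat^2)^{-1}$, and $[(\Smat^1)^{-1}+(\Smat^2)^{-1}]^{-1}$ appearing in~\eqref{lemma:ref}.

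The main obstacle is the bookkeeping in the Schur elimination: matching the accumulated right-hand side exactly to the defined quantity $\sahand^\pind$ while keeping the conjugations by $\Signmat^\pind$ consistent. The conceptual crux, however, is justifying the two structural facts that drive the algebra---that each $\Sing$-row is active in exactly one problem (which pins down $\dwitf{\pind}{\Sing}$) and that each $\Both$-row has both problems tied at the maximum (which yields the mass-splitting constraint $\Signmat^1\dwitf{1}{\Both} + \Signmat^2\dwitf{2}{\Both} = \vecone$); the latter is precisely where the hypothesis $\abs{\best{1}_\Both} = \abs{\best{2}_\Both}$ enters. Finally, the inversions require $\Smat^1$, $\Smat^2$, and $\Smat^1 + \Smat^2$ to be nonsingular; under the scaling $\spindex/\numobs \to 0$ this holds with high probability by the random-matrix concentration of Lemma~\ref{LemRandMat}, in the same manner as the spectral-norm bounds~\eqref{AllMatControl} established for Lemma~\ref{LemVarBound}.
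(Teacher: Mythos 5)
Your proposal is correct and takes essentially the same route as the paper's proof: both fix the duals on $\Sing$ as signed supports, perform the Schur-complement elimination of the $\Sing$-coordinates to get $\Smat^\pind \Signmat^\pind \Nuvar{\pind}_\Both = \sahand^\pind - \relaxn \Signmat^\pind \dwitf{\pind}{\Both}$, and then combine the gap constraint $\Signmat^1 \Nuvar{1}_\Both - \Signmat^2 \Nuvar{2}_\Both = \bdiff$ with the subgradient mass constraint $\Signmat^1 \dwitf{1}{\Both} + \Signmat^2 \dwitf{2}{\Both} = \vecone$ and solve the linear system, the parallel-sum identity producing the coefficients in~\eqref{lemma:ref}. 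The only difference is cosmetic---you eliminate $\Signmat^2 \dwitf{2}{\Both}$ and subtract the two inverted relations, while the paper adds them to solve for the common sign-adjusted error and back-substitutes into~\eqref{inter:3}---and your remark that the sign of the noise contribution to $\sahand^\pind$ is immaterial downstream is apt, since only $\|\sahand^\pind\|_2$ and Gaussian symmetry are used later.
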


Given these forms for $\Signmat^1 \dwitf{1}{\Both}$ and $\Signmat^2
\dwitf{2}{\Both}$, it remains to show that the relation $\Signmat^1
\dwitf{1}{\Both} + \Signmat^2 \dwitf{2}{\Both} = 1$ holds under the
conditions of Theorem~\ref{ThmPhase}(a).  Intuitively, this condition
should hold since under the conditions of theorem~\ref{ThmPhase}(a),
the matrix $\Smat^\pind$ is approximately the identity, and the vector
$\sahand^\pind$ is approaching $0$. Finally, we expect that $\bdiff
\defn |\bstar{2}_\Both| - |\bstar{1}_\Both|$ is very small, hence the
final term is also very small. Therefore, on the set $\Both$, both
$\Signmat^1 \dwitf{1}{\Both}$ and $\Signmat^2 \dwitf{2}{\Both}$ are
approximately equal to $\frac{1}{2}$. We formalize this rough
intuition in the following lemma:
\begin{lemma}
  \label{LemBound}
  Under the assumptions of Theorem~\ref{ThmPhase}(a) each of the
  following conditions hold for sufficiently large $\numobs$,
  $\spindex$, and $\pdim$ with probability greater than $1 - c_1
  \exp(-c_2 \numobs)$:
  \begin{subequations}
    \begin{eqnarray}
      \label{lembound1}
      \norm{\frac{1}{\relaxn} [(\Smat^1)^{-1} + (\Smat^2)^{-1}]^{-1}(\bdiff)}_{\infty} & \leq & \epsilon \\
      \label{lembound2}
      \| \frac{1}{\relaxn} \big \{ \Smat^2 \big[\Smat^1 +
	\Smat^2 \big]^{-1} \sahand^1 - \Smat^1 \big[\Smat^2 + \Smat^1
	\big]^{-1} \sahand^2 \big \} \|_{\infty} & \leq & \epsilon \\
      \label{lembound3}
      \| \Smat^1 \big[ \Smat^1 + \Smat^2 \big]^{-1} \vec{1} - \frac{1}{2} \|_{\infty} & \leq & \frac{1}{2} - 3 \epsilon.
    \end{eqnarray}
  \end{subequations}
\end{lemma}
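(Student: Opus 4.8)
The plan is to read the three quantities off the representation~\eqref{lemma:ref} of Lemma~\ref{lemmaboth} and to control each by splitting it into an exact ``population'' piece plus a random perturbation. The key inputs are the spectral-norm concentration bounds~\eqref{AllMatControl} already established in the proof of Lemma~\ref{LemVarBound}: namely $\matsnorm{\Smat^\pind - I}{2} = \myorder$, $\matsnorm{[(\Smat^1)^{-1} + (\Smat^2)^{-1}]^{-1} - I/2}{2} = \myorder$, and $\matsnorm{\Smat^1[\Smat^1 + \Smat^2]^{-1} - I/2}{2} = \myorder$, each valid with probability at least $1 - c_1\exp(-c_2\numobs)$ when $\spindex/\numobs \to 0$. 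Writing each matrix appearing in~\eqref{lembound1}--\eqref{lembound3} as $\tfrac12 I + E$ with $\matsnorm{E}{2} = \myorder$ reduces everything to bounding $\ell_\infty$-norms of residual products. The essential subtlety throughout is that one cannot afford the naive bound $\vecnorm{\cdot}{\infty} \le \vecnorm{\cdot}{2}$: since these vectors live in $\real^{\overlap\spindex}$, that step loses a factor $\sqrt{\overlap\spindex}$ which, multiplied by $\myorder$, need not vanish. Instead I would bound each coordinate directly by a Gaussian tail estimate and then take a union bound over the $\overlap\spindex$ rows of $\Both$.

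For~\eqref{lembound1}, the leading term is $\tfrac{1}{2\relaxn}\bdiff$, whose $\ell_\infty$-norm is $\tfrac{1}{2\relaxn}\mybetagap = o(1)$ by the standing assumption $\mybetagap = o(\relaxn)$ of Theorem~\ref{ThmPhase}(a); the residual $\tfrac{1}{\relaxn}E\bdiff$ is then absorbed using $\matsnorm{E}{2} = \myorder$ together with the smallness of $\bdiff$. For~\eqref{lembound2}, both matrices lie within $\myorder$ of $\tfrac12 I$, so the dominant contribution is $\tfrac{1}{2\relaxn}(\sahand^1 - \sahand^2)$. Conditioning on $(\mata_\Sing, \noisea, \duala_\Sing)$ exactly as in the proof of Lemma~\ref{LemVarBound}, each $\sahand^\pind$ has independent Gaussian coordinates whose common variance is $\order(\relaxn^2\spindex/\numobs + 1/\numobs)$, coming from the two pieces of the definition~\eqref{EqnDefnSahand}. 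After dividing by $\relaxn^2$, the per-coordinate variance is of order $\spindex/\numobs + 1/(\relaxn^2\numobs)$, which is driven to zero by the scaling hypotheses $\spindex/\numobs \to 0$ and $\relaxn^2\numobs \to +\infty$; a Gaussian tail bound followed by a union bound over $\Both$ then yields~\eqref{lembound2}.

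The main obstacle is~\eqref{lembound3}, the one bound that must be verified with the \emph{correct constant} rather than merely shown to be small. Here the leading term is exactly $\tfrac12\vecone$, so the claim reduces to showing that $\vecnorm{(\Smat^1[\Smat^1+\Smat^2]^{-1} - \tfrac12 I)\vecone}{\infty}$ stays strictly below $\tfrac12 - 3\epsilon$. I would expand $\Smat^\pind = I + E_\pind$ to first order, giving $\Smat^1[\Smat^1+\Smat^2]^{-1} - \tfrac12 I \approx \tfrac14(E_1 - E_2)$, and then evaluate $\tfrac14(E_1 - E_2)\vecone$ coordinatewise. Using the rotational-invariance reduction $\Smat^\pind \edist \numobs^{-1}W^TW$ from the proof of~\eqref{EqnSmatControl}, each coordinate of $E_\pind\vecone$ is conditionally Gaussian with variance of order $\overlap\spindex/\numobs$, so after the union bound over $\Both$ the governing maximum scales like $\tfrac12\sqrt{\overlap\spindex\log(\overlap\spindex)/\numobs}$.

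This is exactly where the phase transition enters. Substituting the success-regime lower bound $\numobs > (1+\delpar)(4-3\overlap)\spindex\log(\pdim - (2-\overlap)\spindex)$ bounds the relevant maximum by $\tfrac12\sqrt{\overlap/[(1+\delpar)(4-3\overlap)]} < \tfrac12$, and the margin $\delpar$ is precisely what permits choosing a fixed $\epsilon > 0$ making~\eqref{lembound3} hold. The delicate point, and the reason this lemma demands far more care than its deterministic analogue, is that the constant in~\eqref{lembound3} is borderline: it coincides with the very sample-size threshold of Theorem~\ref{ThmPhase} separating the regime in which the subgradient weights $\Signmat^1\dwitf{1}{\Both}$ remain in $[0,1]$ from the regime in which they escape. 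Consequently the constant tracking cannot be relaxed anywhere, and combining~\eqref{lembound1}--\eqref{lembound3} with the identity $\Signmat^1\dwitf{1}{\Both} + \Signmat^2\dwitf{2}{\Both} = \vecone$ places each weight in $[\epsilon, 1-\epsilon]$, certifying that the constructed dual matrix is a valid subgradient.
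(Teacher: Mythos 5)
Your skeleton matches the paper's: the same decomposition of each quantity into a deterministic leading part ($\bdiff/(2\relaxn)$, $\sahand^\pind/(2\relaxn)$, $\tfrac{1}{2}\vecone$) plus a matrix-perturbation residual controlled through the spectral bounds~\eqref{AllMatControl}, the same conditional variance bound~\eqref{sahand:varbound} for the genuinely Gaussian pieces, and the correct diagnosis that the naive $\vecnorm{\cdot}{\infty}\leq\vecnorm{\cdot}{2}$ step loses a fatal $\sqrt{\spindex}$ factor. But there is a genuine gap exactly where you propose to close that loss by ``bounding each coordinate directly by a Gaussian tail estimate'': the residual terms are not coordinatewise Gaussian. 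The matrix $[(\Smat^1)^{-1}+(\Smat^2)^{-1}]^{-1}-I/2$ in~\eqref{lembound1} is an inverse-Wishart-type perturbation, and in~\eqref{lembound2} the matrix factor and the vector $\sahand^1$ are dependent (both are functions of $\Xmatf{1}{\Both}$), so you cannot condition on one while retaining Gaussianity of the other. Likewise, in~\eqref{lembound3} your first-order expansion silently discards the exact remainder $\tfrac{1}{2}(\Smat^1-\Smat^2)\big(I/2-(\Smat^1+\Smat^2)^{-1}\big)\vecone$ (the second term of the paper's decomposition~\eqref{EqnDecomp}); its spectral norm is $\order(\spindex/\numobs)$, and since $\|\vecone\|_2=\sqrt{\overlap\spindex}$, the $\ell_2$ route bounds its $\ell_\infty$ norm only by $\order(\spindex^{3/2}/\numobs)$, which diverges under linear sparsity $\numobs\asymp\spindex\log\pdim$---precisely the regime the theorem and simulations cover. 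The paper's mechanism for all of these terms is rotational invariance plus measure concentration on the orthogonal group: write the symmetric random matrix as $\Quni^T D \Quni$ with $\Quni$ Haar-distributed and independent of the other factor, then invoke the Lipschitz/Levy bounds of Lemmas~\ref{biglemma1} and~\ref{minilemma1}, which give $\vecnorm{\Quni^T \Amat \Quni x}{\infty}$ control scaling with $\|x\|_\infty$ and $\vecnorm{\Quni^T \martin}{\infty} \lesssim \|\martin\|_2\sqrt{\log\spindex/\spindex}$---exactly the $\sqrt{\spindex}$-beating device your plan is missing, and for which you supply no substitute.

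A secondary misreading concerns your claim that~\eqref{lembound3} is where the sharp constant $(4-3\overlap)$ enters and that ``the constant tracking cannot be relaxed anywhere.'' In the paper, the phase-transition constant is produced by the strict dual feasibility analysis on $\Jointcom$: Lemma~\ref{LemVarBound} controls $\|\dwitf{1}{\Joint}\|_2^2+\|\dwitf{2}{\Joint}\|_2^2 \approx \tfrac{\spindex}{2}(4-3\overlap)$, and the threshold emerges from Gaussian maxima over the $\pdim-(2-\overlap)\spindex$ coordinates of $\Jointcom$ in the $\Mterm_2$ analysis. Lemma~\ref{LemBound} only needs to hold with some fixed margin $\epsilon$. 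Your own estimate bears this out: even right at the threshold, the governing maximum $\tfrac{1}{2}\sqrt{\overlap/[(1+\delpar)(4-3\overlap)]}$ is bounded strictly below $\tfrac{1}{2}$ for every $\overlap<1$, so~\eqref{lembound3} is not the binding constraint; moreover your replacement of $\log(\pdim-(2-\overlap)\spindex)$ by $\log(\overlap\spindex)$ is legitimate only when the two logarithms are comparable (near-linear sparsity). The observation is suggestive at $\overlap=1$, but it misattributes where the sharp constant is actually used in the argument.
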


Given Lemmas~\ref{lemmaboth} and~\ref{LemBound}, we can conclude that
the definition for the dual variables on the support is valid. The
remaining subsections in this appendix are dedicated to verifying the
above results: in particular, we prove Lemma~\ref{lemmaboth} in
Appendix~\ref{AppLemBoth} and Lemma~\ref{LemBound} in
Appendix~\ref{AppLemBound}.



\subsection{Proof of Lemma~\ref{lemmaboth}}
\label{AppLemBoth}

We now proceed to establish the validity of the closed form
expressions for $\dwit{1}_\Joint$ and $\dwit{2}_\Joint$. From
equation~\eqref{Lem2:1} we have that
\begin{equation*}
  \Nuvar{1}_\Sing = -(\frac{1}{\numobs}
  \binprod{\Xmatf{\pind}{\Sing}}{\Xmatf{\pind}{\Sing}})^{-1} \left [
  \frac{1}{\numobs} \binprod{\Xmatf{1}{\Sing}}{\Xmatf{1}{\Both}}
  \Nuvar{1}_\Both + \relaxn \dwitf{1}{\Sing} \right ] +
  (\frac{1}{\numobs}
  \binprod{\Xmatf{\pind}{\Sing}}{\Xmatf{\pind}{\Sing}})^{-1}
  \Xmatft{1}{\Sing} \wnoise{1}
\end{equation*}
substituting back into \eqref{Lem2:1}
\begin{equation*}
\frac{1}{\numobs} \binprod{\Xmatf{1}{\Both}}{\Xmatf{1}{\Both}}
\Nuvar{1}_\Both + \frac{1}{\numobs}
\binprod{\Xmatf{1}{\Both}}{\Xmatf{1}{\Sing}} \Nuvar{1}_\Sing -
\frac{1}{\numobs} \Xmatft{1}{\Both} \wnoise{1} + \relaxn
\dwitf{1}{\Both} \, = \, 0,
\end{equation*}
so that we obtain
\begin{subequations}
  \begin{eqnarray}
    \label{inter:1}
    \Smat^1 \Nuvar{1}_\Both & = & \sahand^1 - \relaxn
    \dwitf{1}{\Both}\qquad \mbox{and similarly,} \\
    \label{inter:2}
    \Smat^2 \Nuvar{2}_\Both & = & \sahand^2 - \relaxn \dwitf{2}{\Both}
  \end{eqnarray}
\end{subequations}
Recall that by assumption that $\Signmat^1 \best{1}_{\Both} =
\abs{\best{1}_{\Both}} = \abs{\best{2}_{\Both}} = \Signmat^2
\best{2}_{\Both}$, and $\Signmat \duala_\Both + \Signmattil
\dualb_\Both = 1$.

Subtracting $\Smat^1 \Signmat^1 \adjusta$ and $\Smat^2 \Signmat^2
\adjustb$ from equations~\eqref{inter:1} and \eqref{inter:2}
\begin{subequations}
  \begin{gather}
    \label{inter:3}
    \Smat^1 \Signmat^1 (\Nuvar{1}_{\Both} - \adjusta) = \sahand^1 - \relaxn
    \Signmat^1 \dwitf{1}{\Both} - \Smat^1 \Signmat^1 \adjusta \\
    \label{inter:4} \Smat^2 \Signmat^2 (\Nuvar{2}_\Both - \adjustb) = \sahand^2 - \relaxn \Signmat^2 \dwitf{1}{\Both} - \Smat^2 \Signmat^2 \adjustb
  \end{gather}
\end{subequations}
Applying the fact that $\Signmat^1 (\Nuvar{1}_\Both - \adjusta) =
\Signmat^2 (\Nuvar{2}_\Both - \adjustb)$.
\begin{equation*}
  (\Smat^1 + \Smat^2) \Signmat^1 (\Nuvar{1}_\Both-\adjusta) = (\sahand^1
  + \sahand^2) - \relaxn \vec{1} - \Smat^1 \Signmat^1 \adjusta -
  \Smat^2 \Signmat^2 \adjustb,
\end{equation*}
where $\vec{1} \in \real^{\overlap s}$. Then solving for $\Signmat^1
(\Nuvar{1}_\Both - \adjusta)$ letting $\Signmat^1 \adjusta -
\Signmat^2 \adjustb = \bdiff$ and substituting back into
equation~\eqref{inter:3}
\begin{multline}
  \relaxn \Signmat^1 \dwitf{1}{\Both} = \Smat^1 \big [ \Smat^1 + \Smat^2
    \big ] ^{-1} \relaxn \vecone - [(\Smat^1)^{-1} +
    (\Smat^2)^{-1}]^{-1}(\bdiff) \\
  + \Smat^2 \big [ \Smat^1 + \Smat^2 \big ]^{-1} \sahand^1 - \Smat^1
  \big [ \Smat^1 + \Smat^2 \big ]^{-1} \sahand^2.
\end{multline}


\subsection{Proof of Lemma~\ref{LemBound}}
\label{AppLemBound}

The first term $\frac{1}{\relaxn} [(\Smat^1)^{-1} +
(\Smat^2)^{-1}]^{-1} \, \bdiff$ can be decomposed as
\begin{eqnarray*}
  \frac{1}{\relaxn} [(\Smat^1)^{-1} + (\Smat^2)^{-1}]^{-1}\bdiff & = &
  \underbrace{\frac{1}{\relaxn} ([(\Smat^1)^{-1} +
  (\Smat^2)^{-1}]^{-1} - I/2) \bdiff}_{\mbox{\large $\mytermone_1$}} +
  \underbrace{\frac{\bdiff}{2 \relaxn}}_{\mbox{\large $\mytermone_2$}}
\end{eqnarray*}
Under the assumptions of Theorem~\ref{ThmPhase}(a), we have $\big |
\frac{\bdiff}{2 \, \relaxn} \big | \to 0$, hence, for $\spindex$ large
enough, $\mytermone_2 \leq \epsilon/4$.

In order to bound $\mytermone_1$, we note that with probability
greater than $1 - c_1 \exp(-c_2 \numobs)$, the spectral norm of
$([(\Smat^1)^{-1} + (\Smat^2)^{-1}]^{-1} - I/2)$ is $\myorder$ (see
the bound~\eqref{EqnAdjControl} from Appendix~\ref{AppLemVarBound}).
Consequently, we may decompose $([(\Smat^1)^{-1} +
(\Smat^2)^{-1}]^{-1} - I/2)$ as $Q D Q^T$ where $Q$ and $D$ are
independent and $Q$ is distributed uniformly over all orthogonal
matrices, and $\matsnorm{D}{2} = \order(\sqrt{\spindex/\numobs})$.
Using this decomposition, the following lemma, proved in
Appendix~\ref{AppBigLemmaOne}, allows us to obtain the necessary
control on the quantity $\norm{\mytermone_1}_\infty$:
\begin{lemma} 
\label{biglemma1}  
Let $\Quni \in \real^{\spindex \times \spindex}$ be a matrix chosen
uniformly at random from the space of all orthogonal matrices.
Consider a second random matrix $\Amat$, independent of $\Quni$.
If $\spindex/\numobs = o(1)$, then
for any fixed vector $x \in \real^\spindex$ and fixed $\epsilon > 0$,
we have:
\begin{enumerate}
\item[(a)] \label{lem9:parta} If  $\matsnorm{\Amat}{2} \leq
  \sqrt{\frac{\spindex}{\numobs}}$, then
  \begin{eqnarray*}
    \mprob[\norm{\Quni^T \Amat \Quni x}_{\infty} \geq \frac{\epsilon}{2}]
    & \leq & c_1 \exp \big ( -c_2 \epsilon^2 \frac{\numobs}{\spindex
      \|x\|_\infty^2} + \log(\spindex) \big ).
  \end{eqnarray*}
\item[(b)] \label{lem9:partb} If $\matsnorm{\Amat}{2} \leq \frac{\spindex}{\numobs}$, then
  \begin{eqnarray*}
    \mprob[\norm{\Quni^T \Amat \Quni x}_{\infty} \geq \frac{\epsilon}{2}]
    & \leq & c_1 \exp \big ( -c_2 \epsilon^2 \frac{\numobs^2}{\spindex^2
      \|x\|_\infty^2} + \log(\spindex) \big ).
  \end{eqnarray*}
\end{enumerate}
\end{lemma}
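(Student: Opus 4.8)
The plan is to reduce the claim to a per-coordinate concentration statement and then take a union bound over the $\spindex$ coordinates, which produces the additive $\log(\spindex)$ term. Since the randomness relevant to the bound lives in $\Quni$, I would first condition on $\Amat$, so that throughout the argument $\Amat$ is treated as a fixed matrix whose spectral norm obeys the stated hypothesis (integrating over $\Amat$ restricted to the event $\matsnorm{\Amat}{2} \leq \cdots$ then recovers the stated bound). For each fixed index $i \in \{1, \ldots, \spindex\}$, I would define the scalar function $f_i(\Quni) \defn e_i^T \Quni^T \Amat \Quni x = \inprod{\Quni e_i}{\Amat \Quni x}$ on the orthogonal group $O(\spindex)$, so that the target probability satisfies $\mprob[\max_i |f_i(\Quni)| \geq \epsilon/2] \leq \sum_i \mprob[|f_i(\Quni)| \geq \epsilon/2]$ and it suffices to bound each summand.

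The first key step is to show that $f_i$ is Lipschitz on $O(\spindex)$ with respect to the Hilbert--Schmidt metric, with constant $L = 2 \matsnorm{\Amat}{2}\, \norm{x}_2$. Writing $\Delta = \Quni' - \Quni$ and using the identity $f_i(\Quni') - f_i(\Quni) = \inprod{\Delta e_i}{\Amat \Quni' x} + \inprod{\Quni e_i}{\Amat \Delta x}$, I would bound each inner product by $\matsnorm{\Amat}{2}\, \norm{x}_2\, \norm{\Delta}_F$, using $\norm{\Quni' x}_2 = \norm{x}_2$, $\norm{\Quni e_i}_2 = 1$, and $\matsnorm{\Delta}{2} \leq \norm{\Delta}_F$; summing the two terms yields $L$.

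Next I would control the mean. Using the Haar moment identity $\Exs[\Quni_{ab}\Quni_{cd}] = \frac{1}{\spindex}\delta_{ac}\delta_{bd}$, one finds $\Exs_{\Quni}[f_i(\Quni)] = x_i \operatorname{tr}(\Amat)/\spindex$, whence $|\Exs f_i| \leq \norm{x}_\infty \matsnorm{\Amat}{2}$ via $|\operatorname{tr}(\Amat)| \leq \spindex\, \matsnorm{\Amat}{2}$. Under either hypothesis $\matsnorm{\Amat}{2} \to 0$ (since $\spindex/\numobs = o(1)$), so for $\numobs$ large enough $|\Exs f_i| \leq \epsilon/4$ and hence $\{|f_i| \geq \epsilon/2\} \subseteq \{|f_i - \Exs f_i| \geq \epsilon/4\}$. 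I would then invoke concentration of measure on the orthogonal group~\cite{Ledoux01}, whose log-Sobolev / Ricci-curvature constant scales linearly in the dimension $\spindex$, giving $\mprob[|f_i - \Exs f_i| \geq \epsilon/4] \leq c_1 \exp(-c_2 \spindex \epsilon^2 / L^2)$ with $L^2 = 4\matsnorm{\Amat}{2}^2 \norm{x}_2^2$.

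Finally I would convert this to the stated form. Bounding $\norm{x}_2^2 \leq \spindex \norm{x}_\infty^2$ cancels one factor of $\spindex$, leaving the exponent at least $c_2 \epsilon^2/(\matsnorm{\Amat}{2}^2 \norm{x}_\infty^2)$; substituting $\matsnorm{\Amat}{2}^2 \leq \spindex/\numobs$ gives the exponent $c_2 \epsilon^2 \numobs/(\spindex \norm{x}_\infty^2)$ of part (a), while $\matsnorm{\Amat}{2}^2 \leq (\spindex/\numobs)^2$ gives $c_2 \epsilon^2 \numobs^2/(\spindex^2 \norm{x}_\infty^2)$ of part (b), and the union bound over the $\spindex$ coordinates supplies the $\log(\spindex)$. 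I expect the main obstacle to be the correct invocation of concentration on $O(\spindex)$ with the linear-in-$\spindex$ constant (the true source of the dimensional gain) together with pinning down the Hilbert--Schmidt Lipschitz constant; the seemingly lossy passage $\norm{x}_2^2 \leq \spindex \norm{x}_\infty^2$ is in fact harmless, since combined with the dimensional factor $\spindex$ from the concentration it reproduces exactly the claimed exponents.
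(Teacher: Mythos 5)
Your proposal is correct, but it takes a genuinely different route from the paper's proof. The paper also starts with the union bound over coordinates, but then decomposes $e_1^T \Quni^T \Amat \Quni x = x_1 v_1^T \Amat v_1 + v_1^T \Amat v_2$, where $v_1$ is the first column of $\Quni$ and $v_2 = \sum_{k \geq 2} x_k \Quni_k$; the diagonal term is killed deterministically via $|x_1 v_1^T \Amat v_1| \leq \|x\|_\infty \matsnorm{\Amat}{2} = o(1)$, and the cross term is handled by conditioning on $v_2$, observing that $v_1$ is then uniform on a sphere orthogonal to $v_2$, and applying L\'evy concentration on $S^{\spindex - 1}$ to the \emph{linear} function $v_1 \mapsto v_1^T \Amat v_2$, whose Lipschitz constant $\matsnorm{\Amat}{2}\|x\|_\infty \sqrt{\spindex - 1}$ produces exactly your exponents. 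You instead treat the full quadratic form $f_i(\Quni) = e_i^T \Quni^T \Amat \Quni x$ as a Lipschitz function on the orthogonal group (your Lipschitz computation $L = 2\matsnorm{\Amat}{2}\|x\|_2$ is correct, as is the Haar moment calculation $\Exs f_i = x_i \operatorname{tr}(\Amat)/\spindex$, which plays the structural role of the paper's diagonal-term bound) and invoke group-level concentration with dimension-linear constant; the bookkeeping $\|x\|_2^2 \leq \spindex \|x\|_\infty^2$ followed by the two hypotheses on $\matsnorm{\Amat}{2}$ indeed reproduces both claimed exponents, and the union bound supplies the $\log \spindex$. What each approach buys: the paper's argument requires only classical spherical concentration (plus an invariance argument to justify the conditional distribution of $v_1$ given $v_2$), at the price of an explicit diagonal/off-diagonal split; yours avoids any decomposition or conditioning but leans on the heavier fact that concentration on $SO(\spindex)$ has Ricci/log-Sobolev constant linear in $\spindex$. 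One caveat worth flagging in your write-up: those concentration results are stated for the connected group $SO(\spindex)$, whereas Haar measure on $O(\spindex)$ mixes two components. This is easily repaired --- either apply the bound on each component via a fixed reflection $R$ (so $Q \mapsto f_i(RQ)$ has the same Lipschitz constant) and check the conditional means, or note that polynomial moments of degree less than $\spindex$ agree on $O(\spindex)$ and $SO(\spindex)$, so your degree-two mean computation is valid on each component for $\spindex \geq 3$ --- but it should be said.
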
 
With reference to the problem of bounding $\| \mytermone_1\|_\infty$, we may
apply part (a) of this lemma with $\Amat = D$ and $x = \frac{\bdiff}{2 \relaxn}$
to conclude that $\|\mytermone_1\|_\infty \leq \epsilon/2$ with high
probability, thereby establishing the bound~\eqref{lembound1}.

\vspace*{.2in}

We now turn the proving the bound~\eqref{lembound2}.  We begin by
decomposing the terms involved in this equation as
\begin{eqnarray*}
\frac{1}{\relaxn} \Smat^2 \big[\Smat^1 + \Smat^2 \big]^{-1} \sahand^1
& = & \frac{1}{\relaxn} \biggr [\Smat^2 \big [ \Smat^1 + \Smat^2
\big]^{-1} - \frac{I}{2} \biggr ] \sahand^1 + \frac{\sahand^1}{2
\relaxn} \\
\frac{1}{\relaxn} \Smat^1 \big[\Smat^1 + \Smat^2 \big]^{-1} \sahand^2
& = & \frac{1}{\relaxn} \biggr [\Smat^1 \big [ \Smat^1 + \Smat^2
\big]^{-1} - \frac{I}{2} \biggr ] \sahand^2 + \frac{\sahand^2}{2
\relaxn}
\end{eqnarray*}
Recalling the form of $\sahand^\pind \in \real^{\overlap \spindex}$,
conditioned on $\Xmatf{\pind}{\Sing}$ and $\wnoise{\pind}$, we have
\begin{equation*}
  \sahand^\pind/(2 \relaxn) \sim N \left (0,\frac{1}{4} \langle
  \dwitf{\pind}{\Sing} , \frac{1}{\numobs} (\frac{1}{\numobs}
  \binprod{\Xmatf{\pind}{\Sing}}{\Xmatf{\pind}{\Sing}})^{-1}
  \dwitf{\pind}{\Sing} \rangle I_{\overlap \spindex} +
  \norm{\wnoise{\pind}}_2^2/(n^2 \relaxn^2) I_{\overlap \spindex}
  \right ).
\end{equation*}
\noindent However, by Lemmas~\ref{LemChiTail} and~\ref{LemRandMat}
(see Appendix~\ref{AppLargeDev}), as well as the fact that
$\norm{\dwitf{\pind}{\spindex}}_2^2 = (1-\overlap)\spindex$, for
$\numobs$ and $\spindex$ large enough, the variance term is bounded by
\begin{equation}
  \label{sahand:varbound}
  \frac{1}{4} \langle \dwitf{\pind}{\Sing} , \frac{1}{\numobs}
  (\frac{1}{\numobs}
  \binprod{\Xmatf{\pind}{\Sing}}{\Xmatf{\pind}{\Sing}})^{-1}
  \dwitf{\pind}{\Sing} \rangle + \norm{\wnoise{\pind}}_2^2/(n^2
  \relaxn^2) \leq \frac{1}{4} (1-\overlap) \frac{\spindex}{\numobs} (1
  + \delta) + \frac{1}{2} \frac{1}{\numobs \relaxn^2}
\end{equation}
\noindent with probability greater than $1 - c_1 \exp(-c_2
\numobs)$. Hence, by standard Gaussian tail bounds, the inequalities
$\norm{\sahand^1/(2\relaxn)}_{\infty} < \epsilon/4$ and
$\norm{\sahand^2/(2 \relaxn)}_{\infty} < \epsilon/4$ both hold with
probability greater than $1 - c_1 \exp(-\delta'
\log(\pdim-2\spindex))$.

Now to bound the first term in the decomposition we begin by
diagonalizing $\Smat^2 = \Quni^T \Smatd \Quni$. Note that $Q$ is
independent of $\Xmat{1}$ and $\Smatd$ and by symmetry
$\Xmatf{1}{\Both} \stackrel{d}{=} \Quni \Xmatf{1}{\Both}$. 
Following some algebra, we find that
\begin{equation*}
\frac{1}{\relaxn} \biggr [\Smat^2 \big [ \Smat^1 + \Smat^2 \big]^{-1}
  - \frac{I}{2} \biggr ] \sahand^1 = \frac{1}{\relaxn} \Quni^T \biggr
  [D \big [ \Quni \Smat^1 \Quni^T + D \big]^{-1} - \frac{I}{2} \biggr
  ] \Quni \sahand^1
\end{equation*}
The random vector $\sahand^1$ is independent of $\Quni$ and $\Quni
f^1$ is independent of $\Quni$ by symmetry. Hence, the vector $v \defn
\frac{1}{2} [ 2 \Smatd (\Smatd + \Quni \Smat^1 \Quni^T)^{-1} - I]
\Quni \frac{1}{\relaxn} \sahand^1$ is independent of $\Quni$. 
For a given constant $c_3$, let us define the event
\begin{equation*}
  \mathcal{\Sail} \define \big \{ \norm{v}_2^2 \, \leq \, c_3^2 \;
  \frac{\spindex^2}{\numobs} \big [ \frac{\spindex}{\numobs} +
    \frac{1}{\relaxn^2 \numobs} \big ] \big \}.
\end{equation*}
We can then write
\begin{eqnarray*}
  \Prob[\norm{\Quni^T v}_{\infty} \geq \epsilon] & \leq & \Prob
  \big[\norm{\Quni^T v}_{\infty} \geq \epsilon \, \mid \, \Sail \big]
  + \Prob[\Sail^c].
\end{eqnarray*}
Note that we may consider the event that $\matsnorm{D}{2} = \order(1)$ and
$[2D(D + Q \Smat^1 Q^T)^{-1} - I] = \myorder$. We claim that each of these events happens with high probability. Note that the former event occurs with high
probability by Lemma~\ref{LemRandMat}. The latter event holds with
high probability since,
\begin{equation*}
  [2D(D + Q \Smat^1 Q^T)^{-1} - I] =
  [2D((D + Q \Smat^1 Q^T)^{-1}-I/2) + D - I].
\end{equation*}
and, both $\matsnorm{D-I}{2} = \myorder$ and
$((D + Q \Smat^1 Q^T)^{-1}-I/2) = \myorder$
by equation~\eqref{matnorm:1}. Thus, the sum of the two
random matrices is also $\myorder$.

Recall the bound on the variance of each component of $\sahand^1$
from equation~\eqref{sahand:varbound} and note that each
component is independent. Applying
the concentration results from Lemma~\ref{LemChiTail}
for $\chi$-squared random variables yields that
$\| \sahand^1 \|_2^2 \leq \frac{1}{4}(1+\delta)\frac{s^2}{n} + \frac{1}{2} \frac{s}{n \relaxn^2}$ with high probability. Hence, under the above conditions
\begin{eqnarray*}
  \| \frac{1}{2} [ 2 \Smatd (\Smatd + \Quni \Smat^1 \Quni^T)^{-1} - I]
  \Quni \frac{1}{\relaxn} \sahand^1\|_2^2 & \leq & \matsnorm{\| \frac{1}{2} [ 2 \Smatd (\Smatd + \Quni \Smat^1 \Quni^T)^{-1} - I]}{2}^2 \| \Quni \frac{1}{\relaxn} \sahand^1\|_2^2 \\
  & \leq & c_3^2 \;
\frac{\spindex^2}{\numobs} \big [ \frac{\spindex}{\numobs} +
  \frac{1}{\relaxn^2 \numobs} \big ] ,
\end{eqnarray*}
with high probabilty, which implies that $\Sail$ holds with high probability
as well. Therefore, it immediately follows then that
$\Prob[\Sail^c] \leq c_1 \exp(-c_2 \spindex)$.

It remains to control the first term.  We do
so using the following lemma, which is proved in
Appendix~\ref{AppMiniLemmaOne}:
\begin{lemma}
\label{minilemma1}
Let $\Quni \in \real^{\mindex \times \mindex}$ be a matrix chosen
uniformly at random from the space of orthogonal matrices. Let
$\martin \in \real^\mindex$ be a random vector independent of $\Quni$,
such that $\|v\|_2 \leq \myvmax$ with probability one.  Then we have
\begin{equation*}
\Prob \big [ \norm{\Quni^T \martin}_{\infty} \geq 2 \, \myvmax
\sqrt{\frac{\log \mindex}{\mindex}} \big] \; = \; o(1).
\end{equation*}
\end{lemma}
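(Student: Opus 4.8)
The plan is to exploit the rotational invariance of the Haar measure on the orthogonal group, which decouples the random radius of $\martin$ from a uniform spherical direction. First I would observe that, since $\Quni$ is distributed uniformly over the orthogonal matrices and is independent of $\martin$, conditioning on $\martin$ shows that $\Quni^T \martin$ is distributed as $\norm{\martin}_2$ times a vector $U$ drawn uniformly from the unit sphere $S^{\mindex-1}$, with the law of $U$ not depending on $\martin$ (applying $\Quni^T$ to any fixed unit vector yields a uniform point on the sphere). Because $\norm{\martin}_2 \leq \myvmax$ almost surely, the event $\{\norm{\martin}_2 \max_i |U_i| \geq t\}$ is contained in $\{\myvmax \max_i |U_i| \geq t\}$ pointwise, so that after taking expectations over $\martin$,
\[
\Prob\big[\norm{\Quni^T \martin}_\infty \geq t\big] \; \leq \; \Prob\big[\max_{i=1,\ldots,\mindex} |U_i| \geq t/\myvmax\big].
\]
Taking $t = 2\myvmax\sqrt{(\log \mindex)/\mindex}$ reduces the claim to showing $\Prob[\max_i |U_i| \geq 2\sqrt{(\log \mindex)/\mindex}] = o(1)$, a statement about a uniform point on the sphere that no longer involves $\martin$ or $\myvmax$.

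Next I would use the standard Gaussian representation $U \edist g/\norm{g}_2$ with $g \sim N(0, I_\mindex)$, so that $\max_i |U_i| = (\max_i |g_i|)/\norm{g}_2$, and control numerator and denominator separately. For the numerator, a union bound over the $\mindex$ coordinates together with the Gaussian tail bound $\Prob[|g_i| \geq s] \leq 2\exp(-s^2/2)$ gives $\Prob[\max_i |g_i| \geq s] \leq 2\mindex\exp(-s^2/2)$; choosing $s = \sqrt{2(1+\eta)\log \mindex}$ for a small fixed $\eta > 0$ makes this equal to $2\mindex^{-\eta} \to 0$. For the denominator, $\norm{g}_2^2$ is a $\chi^2$ variate with $\mindex$ degrees of freedom, so by the concentration result for $\chi^2$ variables (Lemma~\ref{LemChiTail}), the event $\norm{g}_2^2 \geq (1-\delta)\mindex$ holds with probability at least $1 - \exp(-c\mindex)$ for some $c > 0$.

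On the intersection of these two high-probability events I would conclude
\[
\max_i |U_i| \; \leq \; \frac{\sqrt{2(1+\eta)\log \mindex}}{\sqrt{(1-\delta)\mindex}} \; = \; \sqrt{\frac{2(1+\eta)}{1-\delta}}\,\sqrt{\frac{\log \mindex}{\mindex}}.
\]
Since $\sqrt{2} < 2$, for $\eta$ and $\delta$ chosen small enough the prefactor is strictly below $2$, so this bound is eventually dominated by $2\sqrt{(\log \mindex)/\mindex}$. A union bound over the two complementary (low-probability) events then yields $\Prob[\max_i |U_i| \geq 2\sqrt{(\log \mindex)/\mindex}] \leq 2\mindex^{-\eta} + \exp(-c\mindex) = o(1)$, which proves the lemma.

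The only genuinely delicate step is the reduction in the first paragraph: one must verify carefully that the independence of $\martin$ and $\Quni$, combined with the invariance of Haar measure, really does separate the random radius $\norm{\martin}_2$ (bounded by $\myvmax$) from a uniform spherical direction, so that the almost-sure bound $\norm{\martin}_2 \leq \myvmax$ can be pulled out \emph{before} any concentration argument is invoked. Once this reduction is in place the remaining estimates are routine, and the slack between $\sqrt{2}$ and the stated constant $2$ leaves ample room to absorb the $(1+\eta)/(1-\delta)$ factors.
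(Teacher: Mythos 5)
Your proposal is correct, and it reaches the conclusion by a genuinely different technical route than the paper's. The paper first applies a union bound over the $\mindex$ coordinates and the symmetry of $\Quni$, reducing matters to the tail of $\qvec_1^T \martin$ where $\qvec_1$ (the first column of $\Quni$) is uniform on the unit sphere $S^{\mindex-1}$; it then views $f(\qvec) = \qvec^T \martin$ as a Lipschitz function on the sphere with Lipschitz constant $\|\martin\|_2 \leq \myvmax$ and invokes L\'evy's concentration theorem~\cite{Ledoux01}, obtaining $\Prob[\norm{\Quni^T \martin}_\infty \geq t] \leq 2 \exp\big(-(\mindex-1) t^2/(2 \myvmax^2) + \log \mindex \big)$, which at $t = 2 \myvmax \sqrt{(\log \mindex)/\mindex}$ decays polynomially, essentially like $1/\mindex$. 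Your first, ``delicate'' step---using independence and Haar invariance to factor $\Quni^T \martin$ into a radius times a uniform direction $U$, and pulling out $\|\martin\|_2 \leq \myvmax$ pointwise before any concentration is invoked---is sound and plays exactly the role of the paper's symmetry reduction (the paper instead conditions on $\martin$ and keeps $\|\martin\|_2$ inside the Lipschitz constant; the two are equivalent). Where you genuinely diverge is in how $\max_i |U_i|$ is controlled: instead of L\'evy's theorem you use the Gaussian representation $U \edist g/\norm{g}_2$, a coordinatewise union bound with the Gaussian tail (Lemma~\ref{LemGaussTail}) for the numerator, and the $\chi^2$ lower-tail bound (Lemma~\ref{LemChiTail}) for the denominator. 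This buys elementarity and self-containedness---every ingredient is already stated in Appendix~\ref{AppLargeDev}, and the slack between $\sqrt{2}$ and the stated constant $2$ is made explicit---at the cost of a nominally weaker rate $\order(\mindex^{-\eta}) + \exp(-c \mindex)$; taking $\eta$ close to $1$, which the constraint $\sqrt{2(1+\eta)/(1-\delta)} < 2$ permits, essentially recovers the paper's $\order(1/\mindex)$ rate, while the paper's route is shorter in context because L\'evy's theorem is needed anyway for Lemma~\ref{biglemma1}. One cosmetic remark: on the intersection of your two good events the bound $\max_i |U_i| \leq \sqrt{2(1+\eta)/(1-\delta)} \, \sqrt{(\log \mindex)/\mindex}$ rules out the claimed event for \emph{every} $\mindex$ once $\eta, \delta$ are fixed small, so the word ``eventually'' is not needed.
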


We now apply this lemma to the random vector $v$ with $\mindex =
\spindex$, and $\myvmax = c_3 \; \frac{\spindex}{\sqrt{\numobs}} \;
\sqrt{ \frac{\spindex}{\numobs} + \frac{1}{\relaxn^2 \numobs}}$.
Note that
\begin{eqnarray*}
2 \myvmax \, \sqrt{\frac{\log \spindex}{\spindex}} & = & 2 c_3 \;
 \sqrt{\frac{\log \spindex}{\numobs}} \; \sqrt{
 \frac{\spindex}{\numobs} + \frac{1}{\relaxn^2 \numobs}} \; = \; o(1),
\end{eqnarray*}
from which the second claim~\eqref{lembound2} in Lemma~\ref{LemBound}
follows.

Finally, we turn to proving the third claim~\eqref{lembound3} in
Lemma~\ref{LemBound}.  Following some algebra, we obtain
\begin{equation}
\label{EqnDecomp}
  \| \Smat^1 \big[ \Smat^1 + \Smat^2 \big]^{-1} \vec{1} - \frac{1}{2}
  \|_{\infty} \, = \, \frac{1}{4} (\Smat^1 - \Smat^2) \vecone +
  \frac{1}{2} (\Smat^1 - \Smat^2)(I/2 - (\Smat^1 + \Smat^2)^{-1})
  \vecone.
\end{equation}
We diagonalize the matrix $\Smat^1 = \Quni^T \Smatd \Quni$, where
$\Smatd$ is diagonal. Since the random matrix $\Smat^1$ has a
spherically symmetric distribution, the matrix $\Quni$ has a uniform
distribution over the space of orthogonal matrices and is independent
of $\Smatd$. Using this decomposition, we can rewrite the second term
in equation~\eqref{EqnDecomp} as
\begin{equation}
\label{EqnIntermediate}
\frac{1}{2} \Quni^T (\Smatd - \Quni \Smat^2 \Quni^T) (\frac{I}{2} -
(\Smatd + \Quni \Smat^2 \Quni^T)^{-1}) \Quni \vecone \; = \; \Quni^T
\Specmat \Quni \vecone
\end{equation}
where $\Specmat \define \frac{1}{4} (\Smatd - \Quni \Smat^2 \Quni^T)
(I - 2 (\Smatd + \Quni \Smat^2 \Quni^T)^{-1})$.  We note that
$\Specmat$ is independent of $\Quni$, because $\Smatd$ and $\Smat^2$
are independent of $\Quni$.  This independence follows from the
spherical symmetry of $\Smat^2$ and the fact that $\Smat^2
\stackrel{d}{=} \Quni \Smat^2 \Quni^T$.

Defining the event \mbox{$\Tail \define \big \{ \matsnorm{\Specmat}{2}
\leq 4 \spindex/\numobs \big \}$}, we claim that
\begin{equation}
\label{EqnFirstClaim}
\mprob[\Tail^c] \; \leq \; c_1 \exp(-c_2 \numobs) \; \rightarrow \; 0.
  \end{equation}
In order to establish this claim, we note that sub-multiplicativity
and triangle inequality imply that
\begin{eqnarray*}
\matsnorm{\Specmat}{2} & \leq & \frac{1}{4} \matsnorm{\Smatd - \Quni
  \Smat^2 \Quni^T}{2} \; \matsnorm{(\Smatd + \Quni \Smat^2 \Quni^T)/2
  - I}{2}\; \matsnorm{2 (\Smatd + \Quni \Smat^2 \Quni^T)^{-1}}{2} \\ %
& \leq & 2 (\matsnorm{\Smatd-I}{2} + \matsnorm{I - \Quni \Smat^2
  \Quni^T}{2}) \matsnorm{(\Smatd + \Quni \Smat^2 \Quni^T)/2 - I}{2},
\end{eqnarray*}
since $\matsnorm{2 (\Quni^T \Smatd \Quni + \Quni \Smat^2
\Quni^T)^{-1}}{2} \leq 2$ with probability greater than $1-c_1
\exp(-c_2 \numobs)$, from the discussion following
Lemma~\ref{LemRandMat}.  Similarly, from this same result, we have
$\order(\matsnorm{\Smatd-I}{2}) = \order(\matsnorm{I - \Quni \Smat^2
\Quni^T}{2}) = \order(\matsnorm{(\Smatd + \Quni \Smat^2 \Quni^T)/2 -
I}{2})\; \leq \; 2 \sqrt{\frac{\spindex}{\numobs}}$, so that the
claim~\eqref{EqnFirstClaim} follows.
  
Using the decomposition~\eqref{EqnIntermediate} and the tail
bound~\eqref{EqnFirstClaim}, we have
\begin{eqnarray*}
\mprob[\norm{\Quni^T \Specmat \Quni 1}_{\infty} \geq \epsilon] & = &
\mprob[ \norm{\Quni^T \Specmat \Quni 1}_{\infty} \geq \epsilon \; \mid
\; \Tail] + \mprob[\Tail^c] \\
& \leq & \order \big ( \, \frac{1}{s} \, \big ) +
\order(\exp(-c(\epsilon) \numobs)),
\end{eqnarray*}
where Lemma~\ref{biglemma1} (proved in Appendix~\ref{AppBigLemmaOne})
provides control on the first term in the inequality.

\subsection{Proof of Lemma~\ref{biglemma1}}
\label{AppBigLemmaOne}

We provide the proof for part (a) of the Lemma and note that
part (b) is analogous.

\noindent By union bound, we have
\begin{eqnarray*}
\mprob[\norm{\Quni^T \Amat \Quni x}_{\infty} \geq \epsilon] & \leq &
\spindex \; \max_{i=1, \ldots, \spindex} \mprob[ | e_i^T \Quni^T \Amat
\Quni x| \geq \epsilon].
\end{eqnarray*}
We will derive a bound on the probability $\mprob[ | e_1^T \Quni^T
\Amat \Quni x| \geq \epsilon]$ that holds for all $e_i, i = 1, \ldots,
\spindex$.  We write $e_1^T \Quni^T \Amat \Quni x = x_1 v_1^T \Amat
v_1 + v_1^T \Amat v_2$, where $v_1$ denotes the first column of
$\Quni$, and $v_2 = \sum_{k=2}^\spindex x_k \Quni_k$ denotes the
weighted sum of the remaining $(k-1)$ columns of $\Quni$.  Since
$\Quni$ is orthogonal, the vector $v_1$ has unit norm $\|v_1\|_2 = 1$,
the vector $v_2$ is orthogonal to $v_1$, and moreover $\norm{v_2}_2^2
\leq \|x\|_\infty^2 \spindex-1$.  Owing to the bound on the spectral
norm of $\Amat$, we have
\begin{eqnarray*}
|x_1 v_1^T \Amat v_1 | & \leq \|x\|_\infty
 \sqrt{\frac{\spindex}{\numobs}}
\end{eqnarray*}
which is less than $\epsilon/2$ for $(\spindex, \numobs)$ sufficiently
large, since $\spindex/\numobs = o(1)$.

We now turn to the second term.  Note that conditioned on $v_2$, the
vector $v_1$ is uniformly distributed over an
$(\spindex-1)$-dimensional unit sphere, contained within the subspace
orthogonal to $v_2$.  Still conditioning on $v_2$, consider the
function $f(v_1) = v_1^T \Amat v_2$.  For any pair of vectors $v_1,
v_1'$ on the unit sphere, we have
\begin{eqnarray*}
|f(v_1) - f(v_1')|^2 & = & |(v_1 -v_1')^T \Amat v_2|^2 \\
& \leq & \matsnorm{\Amat}{2}^2 \; \|x\|_\infty^2 \, (\spindex-1) \|v_1 -
v_1'\|^2_2 \\
& = & \matsnorm{\Amat}{2}^2 \; \|x\|_\infty^2 \, (\spindex-1) \;
\big[2 \, (1 - \cos(d(v_1, v_1'))) \big],
\end{eqnarray*}
where $d = \arccos (v_1^T v_1')$ is the geodesic distance.  Using the
inequality $\cos(d) \geq 1- d^2/2$, valid for $d \in [0, \pi]$, and
the assumption $\matsnorm{\Amat}{2} \leq \sqrt{\spindex/\numobs}$, and
taking square roots, we obtain
\begin{eqnarray*}
  |f(v_1) - f(v_1')| & \leq & \sqrt{\frac{\spindex}{\numobs}} \; \|x
\|_\infty \; \sqrt{(\spindex-1)} \; d(v_1, v'_v),
\end{eqnarray*}
so that $f$ is a Lipschitz constant on the unit sphere (with dimension
$\spindex -1$) with constant $L = \|x\|_\infty
\sqrt{\frac{\spindex}{\numobs} \; (\spindex-1)}$.  Consequently, by
Levy's theorem~\cite{Ledoux01}, for any $\epsilon > 0$, we have
\begin{eqnarray*}
\mprob[|f(v_1)| \geq \epsilon] & \leq & 2 \exp(- (\spindex-2) \;
{\frac{\numobs}{\|x\|_\infty^2 \; \spindex (\spindex-1)}} \;
\epsilon^2) \; \leq \; 2 \, \exp \big( -c_1 \,
\frac{\numobs}{\|x\|_\infty^2 \; \spindex} \epsilon^2 \big).
\end{eqnarray*}

As a final side remark, we note that under the scaling of
Theorem~\ref{ThmPhase}(b), we have \mbox{$\frac{\numobs}{\spindex}
\epsilon^2 - \log(\spindex) \to \infty$} as $\numobs \to \infty$, so
that the probability in question vanishes.


\subsection{Proof of Lemma~\ref{minilemma1}}
\label{AppMiniLemmaOne}
By union bound and symmetry of the distribution $\Quni$, for any $t >
0$, we have
\begin{eqnarray*}
\Prob \big [ \norm{\Quni^T \martin}_{\infty} \, \geq \, t \big ] &
\leq & \mindex \; \Prob \big [ \abs{e_1^T \Quni^T \martin} \, \geq \, t
\big] \\
& = &  \mindex  \; \Prob \big [ \abs{\qvec_1^T \martin} \, \geq \, t
\big],
\end{eqnarray*}
where $\qvec_1$ is the first column of $\Quni$.  Note that $\qvec_1$
is a random vector distributed uniformly over the unit sphere
$S^{\mindex-1}$ in $\mindex$ dimensions.  Viewing the vector $\martin
\in \real^m$ as fixed, consider the function $f(\qvec) = \qvec^T
\martin$ defined over $S^{\mindex-1}$. As in Lemma~\ref{biglemma1},
some calculation shows the Lipschitz constant of $g$ over
$S^{\mindex-1}$ is at most \mbox{$L = \|v\|_2$.}  Applying Levy's
theorem~\cite{Ledoux01}, we conclude that for any $\epsilon > 0$,
\begin{eqnarray*}
\mindex \; \Prob[\abs{f(\qvec_1)} \geq t] & \leq & 2 \exp \big (
-(\mindex - 1) \, \frac{t^2} {2 \|v\|_2^2} + \log \mindex \big).
\end{eqnarray*}
Since $\|v\|_2 \leq \myvmax$ by assumption, it suffices to set $t = 2
\myvmax \sqrt{\frac{\log \mindex}{\mindex}}$.



\section{Some large deviation bounds}
\label{AppLargeDev}

In this appendix, we state some known large deviation bounds for the
Gausssian variates, $\chi^2$-variates, as well as the eigenvalues of
random matrices.  The following Gaussian tail bound is standard:
\begin{lemma}
\label{LemGaussTail}
For a Gaussian variable $Z \sim N(0, \sigma^2)$, for all $t > 0$,
\begin{eqnarray}
\mprob[|Z| \geq t] & \leq & 2 \exp \big(- \frac{t^2}{2 \sigma^2}
\big).
\end{eqnarray}
\end{lemma}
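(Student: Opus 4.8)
The plan is to establish the one-sided tail bound $\mprob[Z \geq t] \leq \exp(-t^2/(2\sigma^2))$ via the standard Chernoff (exponential Markov) method, and then to recover the two-sided statement by symmetry together with a union bound, which is exactly what produces the factor of $2$.

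First I would record the closed form of the moment generating function of a centered Gaussian: for $Z \sim N(0,\sigma^2)$ one has $\Exs[\exp(\lambda Z)] = \exp(\lambda^2 \sigma^2 / 2)$ for every $\lambda \in \reals$, which is a one-line completion-of-the-square computation inside the Gaussian integral. For any $\lambda > 0$, applying Markov's inequality to the nonnegative random variable $\exp(\lambda Z)$ then gives
\[
\mprob[Z \geq t] \;=\; \mprob[\exp(\lambda Z) \geq \exp(\lambda t)] \;\leq\; \exp(-\lambda t)\,\Exs[\exp(\lambda Z)] \;=\; \exp\big(-\lambda t + \tfrac{1}{2}\lambda^2 \sigma^2\big).
\]

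Next I would optimize the exponent over the free parameter $\lambda > 0$. The exponent $-\lambda t + \lambda^2 \sigma^2/2$ is a convex quadratic in $\lambda$, minimized at $\lambda^* = t/\sigma^2$, which is admissible (strictly positive) precisely because $t > 0$. Substituting $\lambda = \lambda^*$ collapses the exponent to $-t^2/(2\sigma^2)$, yielding $\mprob[Z \geq t] \leq \exp(-t^2/(2\sigma^2))$.

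Finally, since $-Z$ is also distributed as $N(0,\sigma^2)$, the identical argument gives $\mprob[Z \leq -t] = \mprob[-Z \geq t] \leq \exp(-t^2/(2\sigma^2))$, and a union bound over the two disjoint events $\{Z \geq t\}$ and $\{Z \leq -t\}$ delivers the claimed factor of $2$. There is no genuine obstacle here; the only points needing care are the closed-form evaluation of the MGF and checking that the optimizing $\lambda^*$ lies in the admissible range $\lambda > 0$. An alternative would be to bound the tail integral directly through the Mills-ratio inequality $\int_t^\infty e^{-x^2/(2\sigma^2)}\,dx \leq (\sigma^2/t)\,e^{-t^2/(2\sigma^2)}$, but I would prefer the Chernoff route since it reproduces the stated constant cleanly, without introducing an extra $1/t$ prefactor that would then have to be controlled.
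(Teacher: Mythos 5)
Your proof is correct, and there is nothing in the paper to compare it against: the paper states this lemma in its appendix as a standard fact and gives no proof at all. Your Chernoff argument is the canonical complete derivation, and every step checks out — the MGF identity $\Exs[e^{\lambda Z}] = e^{\lambda^2\sigma^2/2}$, Markov's inequality applied to $e^{\lambda Z}$, the optimizer $\lambda^* = t/\sigma^2 > 0$ collapsing the exponent to $-t^2/(2\sigma^2)$, and the symmetry-plus-union-bound step producing the factor of $2$ (indeed, since $\{Z \geq t\}$ and $\{Z \leq -t\}$ are disjoint for $t > 0$, the probabilities simply add, so the union bound is not even lossy here). Your closing remark is also sound: the Mills-ratio route would give a $\sigma^2/(t\sqrt{2\pi\sigma^2})$-type prefactor that matches the stated constant only for $t$ large, whereas the Chernoff bound reproduces the clean constant $2$ uniformly over all $t > 0$, which is exactly what the lemma asserts.
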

\noindent The following tail bounds on chi-squared variates are also
useful:
\begin{lemma}
\label{LemChiTail}
Let $X$ be a $\chi$-squared random variable with $d$ degrees of
freedom.  Then for all $t > 0$, we have
\begin{subequations}
\label{EqnMyChi}
\begin{eqnarray}
\label{EqnMyChiUpper}
\Prob[ \frac{X}{d} \geq (1 + t)^2] & \leq & \exp(- \frac{d t^2}{2}),
\qquad \mbox{and} \\
\label{EqnMyChiLower}
  \Prob[\frac{X}{d} \leq (1 - 2 t)] & \leq & \exp(- d t^2).
\end{eqnarray}
\end{subequations}
\end{lemma}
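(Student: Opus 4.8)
The plan is to establish both tail bounds by the standard Chernoff (exponential moment) method, exploiting the representation $X = \sum_{i=1}^d Z_i^2$ with $Z_i$ i.i.d.\ $N(0,1)$. This makes the moment generating function factorize as $\Exs[e^{\lambda X}] = (1-2\lambda)^{-d/2}$ for $\lambda < 1/2$, and $\Exs[e^{-\lambda X}] = (1+2\lambda)^{-d/2}$ for every $\lambda > 0$. In each case I would apply Markov's inequality to the exponentiated variable, optimize the free parameter $\lambda$ in closed form, and then bound the resulting exact exponent by an elementary logarithmic inequality to recover the clean Gaussian-type forms in the statement.

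For the upper tail~\eqref{EqnMyChiUpper}, I would write, for $\lambda \in (0,1/2)$,
\[
\Prob[X \geq d(1+t)^2] \;\leq\; \exp\!\big(-\lambda d(1+t)^2\big)\,(1-2\lambda)^{-d/2}.
\]
Minimizing the exponent over $\lambda$ gives the optimizer $\lambda^* = \tfrac{1}{2}\big(1 - (1+t)^{-2}\big) \in (0,1/2)$, and substituting back yields the exact bound $\exp\!\big(d[\log(1+t) - t - t^2/2]\big)$. The proof then reduces to the elementary inequality $\log(1+t) \leq t$, valid for all $t > -1$, which immediately gives $\log(1+t) - t - t^2/2 \leq -t^2/2$ and hence the claimed $\exp(-dt^2/2)$.

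For the lower tail~\eqref{EqnMyChiLower}, I would apply the same idea to $-X$: for $\lambda > 0$,
\[
\Prob[X \leq d(1-2t)] \;\leq\; \exp\!\big(\lambda d(1-2t)\big)\,(1+2\lambda)^{-d/2}.
\]
The event is vacuous once $1-2t \leq 0$, so I may assume $t \in (0,1/2)$; optimizing gives $\lambda^* = t/(1-2t) > 0$ and the exact exponent $d\big[t + \tfrac{1}{2}\log(1-2t)\big]$. The remaining step is to verify $t + \tfrac{1}{2}\log(1-2t) \leq -t^2$ on $[0,1/2)$. I would do this by setting $h(t) = t + \tfrac12\log(1-2t) + t^2$, checking $h(0)=0$, $h'(0)=0$, and $h''(t) = 2[1-(1-2t)^{-2}] < 0$, so that $h$ is concave with a zero and a horizontal tangent at the origin, whence $h \leq 0$ throughout the interval; this delivers $\exp(-dt^2)$.

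None of the steps is genuinely hard: the only real content lies in the two scalar inequalities $\log(1+t) \leq t$ and $t + \tfrac12\log(1-2t) + t^2 \leq 0$, which are exactly what convert the exact Chernoff exponents into the advertised $e^{-dt^2/2}$ and $e^{-dt^2}$ forms. These are the familiar Laurent--Massart type $\chi^2$ concentration bounds, so I expect the write-up to be short, with the concavity verification of the second inequality being the most calculation-heavy (but still routine) part.
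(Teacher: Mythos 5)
Your proof is correct, but it takes a genuinely different route from the paper's. The paper does not argue from first principles: it quotes the Laurent--Massart deviation inequalities \eqref{EqnMassOne}--\eqref{EqnMassTwo}, namely $\Prob[X \geq x + (\sqrt{x}+\sqrt{d})^2] \leq e^{-x}$ and $\Prob[X - d \leq -2\sqrt{dx}] \leq e^{-x}$, and derives the lemma purely algebraically, substituting $x = dt^2/2$ together with the comparison $1 + \sqrt{2}\,t + t^2 \leq (1+t)^2$ for the upper tail, and the analogous substitution (effectively $x = dt^2$) for the lower tail. You instead run the Chernoff argument yourself, and every step checks out: the moment generating functions are right, the optimizers $\lambda^* = \frac{1}{2}\big(1-(1+t)^{-2}\big)$ and $\lambda^* = t/(1-2t)$ are correct, the exact exponents $d[\log(1+t)-t-t^2/2]$ and $d[t+\frac{1}{2}\log(1-2t)]$ are what substitution gives, and the two scalar inequalities you reduce to are valid ($\log(1+t)\leq t$ for the upper tail; for the lower tail, $h(t)=t+\frac{1}{2}\log(1-2t)+t^2$ satisfies $h(0)=h'(0)=0$ and $h''(t) = 2[1-(1-2t)^{-2}]<0$ on $(0,1/2)$, so $h\leq 0$ there). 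You also correctly flag that the lower-tail event is vacuous once $t \geq 1/2$, a point the paper's citation-based route never needs to address explicitly. What the paper's approach buys is brevity---two lines of algebra given the cited result; what yours buys is self-containment and slightly more information, since your intermediate expressions are the exact optimized Chernoff exponents, strictly sharper than the stated Gaussian-type bounds, and indeed the inequalities the paper cites are themselves established by essentially this computation in~\cite{LauMas98}.
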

\begin{proof}
These tail bounds are immediate consequences of results due to Laurent
and Massart~\cite{LauMas98}, who prove that for all $x > 0$, we have
\begin{subequations}
\begin{eqnarray}
\label{EqnMassOne}
\Prob[X \geq x + (\sqrt{x} + \sqrt{d})^2] & \leq & \exp(-x), \quad
\mbox{and} \\
\label{EqnMassTwo}
\Prob(X - d \leq -2 \sqrt{d x}) & \leq & \exp(-x).
\end{eqnarray}
\end{subequations}
Letting $x = d t^2/2$ in equation~\eqref{EqnMassOne}, we have
\begin{eqnarray*}
\exp(- \frac{d t^2}{2}) & \geq & \Prob[ \frac{X}{d} \geq \sqrt{2} t +
  1 + t^2] \; \geq \; \Prob[ \frac{X}{d} \geq (1 + t)^2],
\end{eqnarray*}
thereby establishing~\eqref{EqnMyChiUpper}.  With the same choice of
$x$, equation~\eqref{EqnMassTwo} implies the
bound~\eqref{EqnMyChiLower} immediately.
\end{proof}

Finally, the following type of large deviations bound on the
eigenvalues of Gaussian random matrices is standard
(e.g.,~\cite{DavSza01}):
\begin{lemma}
\label{LemRandMat}
Let $X \in \real^{\numobs \times \spindex}$ be a random matrix from
the standard Gaussian ensemble (i.e., $X_{ij} \sim N(0,1)$, i.i.d).
Then with probability greater than $1 - c_1 \exp(-c_2 \numobs)$, for
any $\delta > 0$, its eigenspectrum satisfies the bounds
  \begin{equation*}
(1-\delta) \; \Big[1- \sqrt{\frac{\spindex}{\numobs}}\Big]^2 \; \leq
    \; \Lambda_{\min} \big ( \frac{X^T X}{\numobs} \big ) \leq
    \Lambda_{\max} \big ( \frac{X^T X}{\numobs} \big ) \; \leq \;
    (1+\delta) \; \Big[1 + \sqrt{\frac{\spindex}{\numobs}}\Big]^2.
  \end{equation*}
\end{lemma}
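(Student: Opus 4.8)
The plan is to reduce the eigenvalue bounds on $\frac{1}{\numobs} X^T X$ to two-sided control of the extreme singular values of $X$, and then combine sharp estimates of their expectations with Gaussian concentration of measure. Writing $\sigma_{\max}(X)$ and $\sigma_{\min}(X)$ for the largest and smallest singular values of $X$, we have the identities $\Lambda_{\max}\big(\frac{X^T X}{\numobs}\big) = \sigma_{\max}(X)^2/\numobs$ and $\Lambda_{\min}\big(\frac{X^T X}{\numobs}\big) = \sigma_{\min}(X)^2/\numobs$ (the latter valid since $\numobs \geq \spindex$, so that $X^T X$ has full rank almost surely). Thus it suffices to show that, with the stated probability, $\sigma_{\max}(X) \leq \sqrt{1+\delta}\,(\sqrt{\numobs} + \sqrt{\spindex})$ and $\sigma_{\min}(X) \geq \sqrt{1-\delta}\,(\sqrt{\numobs} - \sqrt{\spindex})$; squaring and dividing by $\numobs$ then recovers exactly the two bracketed bounds in the lemma.

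First I would record the expectation estimates $\ex[\sigma_{\max}(X)] \leq \sqrt{\numobs} + \sqrt{\spindex}$ and $\ex[\sigma_{\min}(X)] \geq \sqrt{\numobs} - \sqrt{\spindex}$. Using the variational formulas $\sigma_{\max}(X) = \max_{\|u\|_2 = 1}\max_{\|v\|_2=1} u^T X v$ and $\sigma_{\min}(X) = \min_{\|v\|_2=1}\max_{\|u\|_2=1} u^T X v$, one views $u^T X v$ as a centered Gaussian process indexed by the product of unit spheres and compares it against the decoupled process $g^T u + h^T v$, where $g$ and $h$ are independent standard Gaussian vectors in $\real^{\numobs}$ and $\real^{\spindex}$. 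The extrema of the comparison process are controlled by $\ex\|g\|_2 \leq \sqrt{\numobs}$ and $\ex\|h\|_2 \leq \sqrt{\spindex}$, yielding both mean bounds.

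Next I would invoke Gaussian concentration. Viewing $X$ as a standard Gaussian vector in $\real^{\numobs \spindex}$ with the Frobenius inner product, both $X \mapsto \sigma_{\max}(X)$ and $X \mapsto \sigma_{\min}(X)$ are $1$-Lipschitz, since each of $|\sigma_{\max}(X) - \sigma_{\max}(Y)|$ and $|\sigma_{\min}(X) - \sigma_{\min}(Y)|$ is at most $\matsnorm{X - Y}{2} \leq \|X - Y\|_F$. The Gaussian concentration inequality then gives $\prob[\sigma_{\max}(X) \geq \ex[\sigma_{\max}(X)] + t] \leq \exp(-t^2/2)$ and $\prob[\sigma_{\min}(X) \leq \ex[\sigma_{\min}(X)] - t] \leq \exp(-t^2/2)$. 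Combining with the mean bounds and choosing $t = (\sqrt{1+\delta}-1)(\sqrt{\numobs}+\sqrt{\spindex})$ on the upper side, and $t = (1-\sqrt{1-\delta})(\sqrt{\numobs}-\sqrt{\spindex})$ on the lower side, produces the two singular-value bounds; in the relevant regime $\spindex \leq \numobs$ each such $t$ is at least a $\delta$-dependent multiple of $\sqrt{\numobs}$, so that $t^2/2 \geq c_2 \numobs$ for some $c_2 = c_2(\delta) > 0$. A union bound over the two failure events then gives the overall bound $1 - c_1 \exp(-c_2 \numobs)$.

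I expect the main obstacle to be the expectation estimates, and in particular the lower bound on $\ex[\sigma_{\min}(X)]$. The reduction to singular values and the concentration step are routine, and the upper bound $\ex[\sigma_{\max}(X)] \leq \sqrt{\numobs}+\sqrt{\spindex}$ already follows from Slepian's inequality; but extracting the sharp constant $\sqrt{\numobs}-\sqrt{\spindex}$ for the minimax quantity $\sigma_{\min}(X)$ requires the full strength of Gordon's Gaussian comparison inequality rather than a crude $\epsilon$-net argument, which would lose the precise constants needed for Theorem~\ref{ThmPhase}. Alternatively, one may simply cite the non-asymptotic bounds of Davidson and Szarek~\cite{DavSza01}, which package exactly this combination of mean estimates and concentration.
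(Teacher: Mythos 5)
Your proposal is correct and is essentially the paper's own approach: the paper gives no proof of Lemma~\ref{LemRandMat}, declaring it standard and citing Davidson and Szarek~\cite{DavSza01}, whose non-asymptotic bound is exactly the package you reconstruct (Gordon's comparison inequality for $\ex[\sigma_{\max}(X)]$ and $\ex[\sigma_{\min}(X)]$, plus Gaussian concentration for the $1$-Lipschitz singular-value maps). The one caveat is that your claim $t^2/2 \geq c_2 \numobs$ on the lower side needs $\spindex/\numobs$ bounded away from $1$ (otherwise $\sqrt{\numobs}-\sqrt{\spindex} = o(\sqrt{\numobs})$ and the lower tail is not exponentially small in $\numobs$), but this holds wherever the paper invokes the lemma, since there $\spindex/\numobs = o(1)$.
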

Note that this lemma implies similar bounds for eigenvalues of the
inverse:
\begin{equation*}
\frac{1}{(1+\delta) \; \big[1 +
\sqrt{\frac{\spindex}{\numobs}}\big]^2}\; \leq \; \Lambda_{\min}
\big ( ( \frac{X^T X}{\numobs})^{-1} \big ) \leq \Lambda_{\max}
\big ( (\frac{X^T X}{\numobs})^{-1} \big ) \; \leq \;
\frac{1}{(1-\delta) \; \big[1-
\sqrt{\frac{\spindex}{\numobs}}\big]^2}.
\end{equation*}

From the above two sets of inequalities, we conclude for
$\spindex/\numobs \leq 1$, we have with probability greater than $1 -
c_1 \exp( - c_2 \numobs)$
\begin{subequations}
\label{standardmat}
\begin{eqnarray}
\label{matnorm:1}
\matsnorm{ \frac{1}{\numobs} \mata^T \mata - I}{2} & \leq &
4 \sqrt{\frac{\spindex}{\numobs}}, \quad \mbox{and} \\
\label{matnorm:2}
\matsnorm{ (\frac{1}{\numobs} \mata^T \mata)^{-1} - I}{2} & \leq & 4
\sqrt{\frac{\spindex}{\numobs}}.
\end{eqnarray}
\end{subequations}

\newcommand{\eigmin}{\ensuremath{\lambda_{\operatorname{min}}}}
\newcommand{\eigmax}{\ensuremath{\lambda_{\operatorname{max}}}}

For random matrices where each row is distributed $N(0,\Covmat)$ and
$\Lambda_{\min}(\Covmat) > \mineig_{\min}$ and
$\Lambda_{\max}(\Covmat) \leq \mineig_{\max}$, we have
\begin{subequations}
\label{generalmat}
\begin{eqnarray}
\label{genmatnorm:1}
\matsnorm{ \frac{1}{\numobs} \mata^T \mata - \Covmat}{2} & \leq &
\eigmax(\Covmat) 4 \sqrt{\frac{\spindex}{\numobs}}, \quad \mbox{and}\\
\label{genmatnorm:2}
\matsnorm{ (\frac{1}{\numobs} \mata^T \mata)^{-1} - \Covmat^{-1}}{2} &
\leq & \frac{4}{\eigmin(\Covmat)} \; \sqrt{\frac{\spindex}{\numobs}}.
\end{eqnarray}
\end{subequations}



\end{document}